\newtheorem{theorem}{Theorem}[section]
\newtheorem{lemma}[theorem]{Lemma}
\newtheorem{proposition}[theorem]{Proposition}
\newtheorem{corollary}[theorem]{Corollary}
\newtheorem{remark}[theorem]{Remark}
\newtheorem{note}[theorem]{Note}
\newtheorem{Formula of adjoint functors}[theorem]{Formula of adjoint functors}
\newtheorem{example}[theorem]{Example}
\newtheorem{examples}[theorem]{Examples}
\newtheorem{definition}[theorem]{Definition}
\newtheorem{notation}[theorem]{Notation}
\newtheorem{Adjunction formula}[theorem]{\indent\sc Adjunction formula}
\DeclareMathOperator{\limi}{{lim}}
\newcommand{\ilim}[1]{\,\underset{#1}{\underset{\to}{\limi}}\,}
\newcommand{\plim}[1]{\,\underset{#1}{\underset{\leftarrow}{\limi}}\,}
\DeclareMathOperator{\Hom}{{Hom}}
\DeclareMathOperator{\Spec}{{Spec}}
\DeclareMathOperator{\Coker}{{Coker}}
\DeclareMathOperator{\Ker}{{Ker}}
\DeclareMathOperator{\Ima}{{Im}}
\begin{document}

\title{Functorial characterizations of Mittag-Leffler modules}

\author{Carlos Sancho, Fernando Sancho and Pedro Sancho}
%
%
%

\date{Junio, 2017}

\begin{abstract} We give some functorial characterizations of Mittag-Leffler modules and strict Mittag-Leffler modules.

\end{abstract}

\maketitle

\section{Introduction}
Let $R$ be a commutative (associative with unit) ring. 
Let $\mathcal R$ be the covariant functor from the category of commutative $R$-algebras to the category of  rings
defined by  $\mathcal R(S):=S$ for any commutative  $R$-algebra $S$.
Let $M$ be an  $R$-module. Consider the functor of $\mathcal R$-modules, $\mathcal M$, defined by $\mathcal M(S):=M\otimes_R S$, for any commutative $R$-algebra $S$. $\mathcal M$ is said to be the functor of  {\sl quasi-coherent}  $\mathcal R$-modules associated with $M$. It is easy to prove that the category of $R$-modules is equivalent to the category of functors of quasi-coherent $\mathcal R$-modules. Consider the dual functor
$\mathcal M^*:=\mathbb Hom_{\mathcal R}(\mathcal M,\mathcal R)$ defined by
$\mathcal M^*(S):=\Hom_S(M\otimes_RS,S)$.  $\mathcal M^*$ is called an $\mathcal R$-module scheme.  In general, the canonical morphism  $M\to M^{**}$ is not an isomorphism, but, surprisingly,  $\mathcal M=\mathcal M^{**}$ (see \ref{reflex}). 
This result has many applications in Algebraic Geometry
 (see \cite{Pedro}), for example the Cartier duality of commutative affine groups and commutative formal groups. 

 In  \cite{Amel2}, we proved that an
 $R$-module $M$ is a projective module of finite type iff  $\mathcal M$ is a module scheme. In  \cite{Pedro2}, we proved that  $M$ is a flat $R$-module iff $\mathcal M$ is a direct limit of module schemes. It is also proved that $M$ is a flat Mittag-Leffler module iff $\mathcal M$ is the direct limit of its submodule schemes. In \cite{Pedro3}, we proved that $M$ is a flat strict Mittag-Leffler module (see \cite{Garfinkel}, for definition and properties) iff $\mathcal M$ is the direct limit of its submodule schemes, $\mathcal M=\ilim{i}\mathcal N_i^*$,  and the morphisms $\mathcal M^*\to\mathcal N_i$ are epimorphisms.

 The definition of a Mittag-Leffler module  is slightly elaborated (see \cite[Tag 0599]{stacks-project}). Mittag-Leffler conditions were first introduced by Grothendieck in \cite{Grot}, and deeply studied
by some authors, such as  Raynaud and Gruson in \cite{RaynaudG}. Every module is a direct limit of finitely presented modules.  Roughly speaking, we  prove that a module $M$ is a Mittag-Leffler module iff $\mathcal M$ is a direct limit of finitely presented functors of submodules.

Let $P$ be an $R$-module. $P$ is a finitely presented module iff $$\Hom_R(P,\ilim{i} N_i)=\ilim{i}\Hom_R(P, N_i),$$ for any direct system $\{N_i\}$ of $R$-modules (see \ref{4.4}). We will say that a functor of $\mathcal R$-modules, $\mathbb P$,  is an FP-functor if $$\Hom_{\mathcal R}(\mathbb  P,\ilim{i}\mathcal N_i)=\ilim{i}\Hom_{\mathcal R}(\mathbb  P,\mathcal N_i)$$ for every direct system of quasi-coherent modules $\{\mathcal N_i\}$.  Module schemes are FP-functors. We prove the following theorem.

\begin{theorem}   $\mathbb M$ is an FP-functor of $\mathcal R$-modules iff $\mathbb M^*$ is the cokernel of a morphism $F\colon \oplus_{i} \mathcal P_i^*\to \oplus_{j} \mathcal Q_j^*$, where $P_i,Q_j$ are finitely presented $R$-modules, for every $i,j$. 
\end{theorem}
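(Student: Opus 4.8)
The plan is to reduce both implications to a single computation of $\Hom_{\mathcal R}(\mathbb M,\mathcal N)$ for quasi-coherent $\mathcal N=\widetilde N$, exploiting duality together with the fact that, for finitely presented $Q$, the module scheme $\mathcal Q^*$ ``corepresents evaluation.'' First I would record the tools. Duality gives the natural isomorphism $\Hom_{\mathcal R}(\mathbb M,\mathcal N)=\Hom_{\mathcal R}(\mathcal N^*,\mathbb M^*)$: indeed from the symmetric pairing $\Hom_{\mathcal R}(\mathbb M,\mathbb N^*)=\Hom_{\mathcal R}(\mathbb N,\mathbb M^*)$ with $\mathbb N=\mathcal N^*$, using $\mathcal N=\mathcal N^{**}$ (see \ref{reflex}), one gets $\Hom_{\mathcal R}(\mathbb M,\mathcal N)=\Hom_{\mathcal R}(\mathcal N^*,\mathbb M^*)$. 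The second, crucial tool is the evaluation formula $\Hom_{\mathcal R}(\mathcal N^*,\mathbb L)=\mathbb L(N)$, valid for every functor $\mathbb L$; it specializes to $\Hom_{\mathcal R}(\mathcal N^*,\mathcal M)=M\otimes_R N$ when $\mathbb L=\mathcal M$ is quasi-coherent, and to $\Hom_{\mathcal R}(\mathcal N^*,\mathcal Q^*)=\Hom_R(Q,N)=\mathcal Q^*(N)$ when $\mathbb L=\mathcal Q^*$. Its point is that $\Hom_{\mathcal R}(\mathcal N^*,-)$ agrees with the \emph{exact} functor ``evaluate at $N$,'' so it converts a presentation of $\mathbb M^*$ into a pointwise cokernel.

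For $(\Leftarrow)$, suppose $\mathbb M^*=\Coker(F\colon\oplus_i\mathcal P_i^*\to\oplus_j\mathcal Q_j^*)$. Applying $\Hom_{\mathcal R}(\mathcal N^*,-)$ to this presentation, and using that evaluation is exact and commutes with direct sums, I obtain
\[
\Hom_{\mathcal R}(\mathbb M,\mathcal N)=\Hom_{\mathcal R}(\mathcal N^*,\mathbb M^*)=\Coker\Big(\bigoplus_i\Hom_R(P_i,N)\to\bigoplus_j\Hom_R(Q_j,N)\Big).
\]
Now FP-ness is immediate: replacing $N$ by $\ilim{k}N_k$, each functor $\Hom_R(P_i,-)$ and $\Hom_R(Q_j,-)$ commutes with the direct limit precisely because $P_i,Q_j$ are finitely presented (this is \ref{4.4}), while $\bigoplus$ and $\Coker$ commute with direct limits formally. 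Hence $\Hom_{\mathcal R}(\mathbb M,-)$ commutes with direct limits of quasi-coherent modules, i.e.\ $\mathbb M$ is an FP-functor. This is exactly where the finite presentation of the $P_i,Q_j$ is indispensable.

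For $(\Rightarrow)$, I would read the same formula backwards. The assignment $N\mapsto\mathbb M^*(N):=\Hom_{\mathcal R}(\mathbb M,\widetilde N)$ is an additive functor on $R$-modules, and the FP hypothesis says precisely that it commutes with direct limits, i.e.\ is finitary. I then present this finitary functor by corepresentables. For finitely presented $Q$, Yoneda gives $\Hom(\Hom_R(Q,-),\mathbb M^*)=\mathbb M^*(Q)$, which matches $\Hom_{\mathcal R}(\mathcal Q^*,\mathbb M^*)=\mathbb M^*(Q)$ from duality; so a family of generating elements yields an epimorphism $\oplus_j\mathcal Q_j^*\twoheadrightarrow\mathbb M^*$ with each $Q_j$ finitely presented (finiteness being available because $\mathbb M^*$ is finitary, hence determined by its values on finitely presented modules). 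Its kernel is again finitary, so it is covered in the same way by some $\oplus_i\mathcal P_i^*$; splicing gives $\mathbb M^*=\Coker(\oplus_i\mathcal P_i^*\to\oplus_j\mathcal Q_j^*)$, all identifications being pointwise and thus valid over $\mathcal R$-algebras.

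The step I expect to be the real obstacle is the evaluation formula $\Hom_{\mathcal R}(\mathcal N^*,\mathbb L)=\mathbb L(N)$ and, more pointedly, the exactness it encodes: a priori $\Hom_{\mathcal R}(\mathcal N^*,-)$ is only left exact, so computing $\Hom_{\mathcal R}(\mathcal N^*,\mathbb M^*)$ from a right-exact presentation of $\mathbb M^*$ is obstructed by a potential $\mathrm{Ext}^1(\mathcal N^*,\ker)$ term. The substance of the argument is that, because the building blocks $\oplus_j\mathcal Q_j^*$ are module schemes, this obstruction vanishes and $\Hom_{\mathcal R}(\mathcal N^*,-)$ behaves as the exact evaluation functor. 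A secondary technical point is the careful passage between functors on $R$-algebras and their extensions to $R$-modules, needed to invoke Yoneda in the $(\Rightarrow)$ direction; I would dispatch it by checking that the identifications $\Hom_{\mathcal R}(\mathcal Q^*,\mathbb M^*)=\mathbb M^*(Q)$ are functorial and that kernels, cokernels and direct sums are computed pointwise.
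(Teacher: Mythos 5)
Your proposal follows the same architecture as the paper's proof: the duality $\Hom_{\mathcal R}(\mathbb M,\mathcal N)=\Hom_{\mathcal R}(\mathcal N^*,\mathbb M^*)$ (the paper's functor $\overline{\mathbb M^*}$ together with Proposition \ref{4.7}), a pointwise-cokernel computation for $(\Leftarrow)$, and for $(\Rightarrow)$ the tautological cover of $\mathbb M^*$ by $\oplus_{(\mathcal Q^*,g)}\mathcal Q^*$ indexed by \emph{all} morphisms from duals of finitely presented modules, followed by iteration on the kernel. The problem is that the facts carrying all the weight are asserted rather than proved. Your ``evaluation formula'' $\Hom_{\mathcal R}(\mathcal N^*,\mathbb L)=\mathbb L(N)$, claimed ``valid for every functor $\mathbb L$,'' is not even meaningful in that generality ($\mathbb L$ is a functor on algebras, not on modules), and the honest version holds only for \emph{dual} functors; that restriction is precisely where the content of the theorem lives. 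More pointedly, the exactness of $\Hom_{\mathcal R}(\mathcal N^*,-)$ on the presentation, and its commutation with the infinite direct sums $\oplus_i\mathcal P_i^*$, are exactly what has to be proved in $(\Leftarrow)$, and you offer no argument: ``the substance of the argument is that \dots this obstruction vanishes'' restates the claim instead of proving it. In the paper these are the preliminary results: a sequence of dual functors is exact iff the associated sequence of functors on $R$-modules is exact at every $N$ (proved via the split square-zero algebra $R\oplus N$ and $\mathbb F(R\oplus N)=\bar{\mathbb F}(R)\oplus\bar{\mathbb F}(N)$); $\oplus_i\mathcal P_i^*$ is itself a dual functor, namely $(\prod_i\mathcal P_i)^*$ (Proposition \ref{refpro}, resting on Lemma \ref{main}); and $\overline{\oplus_i\mathcal P_i^*}=\oplus_i\overline{\mathcal P_i^*}$ (which uses Theorem \ref{L5.11}). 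Without these ingredients your $(\Leftarrow)$ is circular.

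In $(\Rightarrow)$ there is a further gap at the iteration step. ``Its kernel is again finitary, so it is covered in the same way'' is not enough: to rerun the tautological covering argument on $\Ker G$ and conclude that the resulting map is an epimorphism of \emph{functors} (surjective on every algebra $S$, not merely surjective after applying $\Hom_{\mathcal R}(\mathcal N^*,-)$ at every module $N$), you must know that $\Ker G$ belongs to the class of functors for which the bar/evaluation machinery applies. The paper settles this by showing $\Ker G$ is a dual functor: by Proposition \ref{trivial} and Proposition \ref{refpro}, $G$ is the dual of a morphism $H\colon\mathbb M\to\prod_{B}\mathcal P$, whence $\Ker G=(\Coker H)^*$. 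You never address this point; ``all identifications being pointwise and thus valid over $\mathcal R$-algebras'' glosses over exactly the distinction (values on algebras versus values on modules, arbitrary functors versus dual functors) that the paper's preliminaries exist to control.
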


Let $\{N_i\}$ be the set of the finitely generated submodules of $M$. Let ${\tilde N_i}:=\Ima[\mathcal N_i\to\mathcal M]$, for any $N_i$. Then, $\mathcal M=\ilim{i} {\tilde N_i}.$
We prove the following theorems.

\begin{theorem} Let $M$ be an $R$-module. The following statements are equivalent:\begin{enumerate}

\item $M$ is a  Mittag-Leffler module.

\item ${\tilde N_i}$ is an FP-functor, for any $i$.

\item 
$\mathcal M$ is a direct limit of FP-functors of $\mathcal R$-submodules. 

\item  The kernel of every morphism
$\mathcal R^n\to\mathcal M$  is isomorphic to a quotient of a module scheme.

\item The kernel of every morphism
$\mathcal N^*\to\mathcal M$ is isomorphic to a quotient of a module scheme, for any  $R$-module $N$.

\end{enumerate}

\end{theorem}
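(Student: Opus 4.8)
The plan is to prove the cycle $(1)\Rightarrow(2)\Rightarrow(3)\Rightarrow(1)$, together with $(5)\Rightarrow(4)\Rightarrow(1)$ and $(1)\Rightarrow(5)$, using Theorem 1 and the reflexivity $\mathcal M=\mathcal M^{**}$ as the main tools and the classical description of Mittag-Leffler modules (see \cite{RaynaudG,stacks-project}) as the bridge to condition (1). The computations will be organized around the two short exact sequences of functors attached to a finitely generated submodule $N_i\subseteq M$: writing $C_i:=M/N_i$ and $\mathbb K_i:=\Ker(\mathcal N_i\to\mathcal M)$,
$$0\to \tilde N_i\to\mathcal M\to\mathcal C_i\to 0,\qquad 0\to\mathbb K_i\to\mathcal N_i\to\tilde N_i\to 0,$$
together with the identity $\mathbb K_i=\ilim{j}\Ker(\mathcal N_i\to\mathcal N_j)$ (directed union over finitely generated $N_j\supseteq N_i$), coming from $\mathcal M=\ilim{j}\mathcal N_j$. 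I would use the Mittag-Leffler condition in the form: for each $i$ there is a $j$ with $\mathbb K_i=\Ker(\mathcal N_i\to\mathcal N_j)$, i.e.\ the above union stabilizes; presenting $N_i$ by a free module, this says that $\tilde N_i$ is the image of a morphism of finitely presented quasi-coherent modules, and hence $\tilde N_i=\Ker(\mathcal P\to\mathcal C)$ with $P,C$ finitely presented (take $C$ the cokernel).

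For $(1)\Rightarrow(2)$ I would dualize this last presentation: since $\mathcal P$ and $\mathcal C$ are quasi-coherent, the sequence $0\to\tilde N_i\to\mathcal P\to\mathcal C\to 0$ dualizes to an exact sequence $\mathcal C^*\to\mathcal P^*\to(\tilde N_i)^*\to 0$, exhibiting $(\tilde N_i)^*$ as the cokernel of a morphism of duals of finitely presented modules; Theorem 1 then gives that $\tilde N_i$ is an FP-functor. The implication $(2)\Rightarrow(3)$ is immediate, taking the directed family $\{\tilde N_i\}$ of $\mathcal R$-submodules, whose limit is $\mathcal M$. For $(3)\Rightarrow(1)$, writing $\mathcal M=\ilim{k}\mathbb M_k$ with $\mathbb M_k$ FP-submodules, I would note that the finitely many generators of any $N_i$ lie in some $\mathbb M_k(R)$, so $\tilde N_i\subseteq\mathbb M_k$; applying the FP-property of $\mathbb M_k$ to the system $\{\mathcal N_j\}$ factors the inclusion $\mathbb M_k\hookrightarrow\mathcal M$ through some $\mathcal N_j$, and restricting to $\tilde N_i$ yields $\mathbb K_i=\Ker(\mathcal N_i\to\mathcal N_j)$, which is exactly the stabilization characterizing Mittag-Leffler modules.

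For the last two conditions, $(5)\Rightarrow(4)$ is the special case $N=R^n$, since $\mathcal R^n=(\mathcal R^n)^*$ is a module scheme. For $(1)\Rightarrow(5)$, given $f\colon\mathcal N^*\to\mathcal M$, I would use that module schemes are FP-functors together with $\mathcal M=\ilim{i}\mathcal N_i$ to factor $f$ through some $\mathcal N_i$; then $\Ker f$ is the preimage of $\mathbb K_i$, and the stabilization $\mathbb K_i=\Ker(\mathcal N_i\to\mathcal N_j)$ identifies $\Ker f=\Ker(\mathcal N^*\to\mathcal N_j)$, which the duality formalism presents as $[\Coker(\mathcal N_j^*\to\mathcal N)]^*$, a module scheme, a fortiori a quotient of one. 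Conversely, for $(4)\Rightarrow(1)$ I would read the same computation backwards: $\Ker(\mathcal R^n\to\mathcal M)$ is an extension of $\mathbb K_i$ by $\Ker(\mathcal R^n\to\mathcal N_i)$, and being a quotient of a module scheme forces $\mathbb K_i$ to stabilize, hence $M$ to be Mittag-Leffler.

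The main obstacle, recurring in every step, is the exactness of dualization: the functor $\mathbb{Hom}_{\mathcal R}(-,\mathcal R)$ is only left exact, so the surjectivity of the dualized maps $\mathcal P^*\to(\tilde N_i)^*$ in $(1)\Rightarrow(2)$ and $\mathcal N^*\to[\Coker(\mathcal N_j^*\to\mathcal N)]^*$ in $(1)\Rightarrow(5)$ is a genuine point, and it is precisely there that finiteness of $\mathcal C$ and the Mittag-Leffler stabilization enter. Dually, the crux of $(4)\Rightarrow(1)$ is to show that a non-stabilizing kernel cannot be a quotient of a module scheme; the prototype to keep in mind is the integral-torsion functor $S\mapsto S_{\mathrm{tors}}$ arising from $\mathcal R\to\mathcal M$ with $M=\mathbb Q$, which does not even commute with products, whereas module schemes and their quotients are controlled on products. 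I expect this product-exactness obstruction to be the technical heart of the converse.
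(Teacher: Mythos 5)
Your argument has a genuine gap, and it sits exactly where you predicted the ``technical heart'' would be: the bridge between condition (1) and your stabilization condition. You take as given that $M$ is Mittag-Leffler iff for every finitely generated submodule $N_i\subseteq M$ there is a finitely generated submodule $N_j\supseteq N_i$ with $\mathbb K_i=\Ker(\mathcal N_i\to\mathcal N_j)$. That is not the classical criterion: the stabilization characterization of Raynaud--Gruson (cf. \cite[Tag 0598]{stacks-project}) is for directed systems of \emph{finitely presented} modules, and the finitely generated submodules of $M$ need not be finitely presented. The implication you actually need for $(3)\Rightarrow(1)$ and $(4)\Rightarrow(1)$ --- stabilization over finitely generated submodules implies Mittag-Leffler --- is false. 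Counterexample: $R=k[x_1,x_2,\dots]$, $I=(x_1,x_2,\dots)$, $M=R/I$. Since $M$ is cyclic, your condition holds trivially (take $N_j=M$ for every $N_i$), yet $M$ is not Mittag-Leffler: a witness would be a finitely presented $Q$ and $g\colon R\to Q$ with $\Ker(N\to M\otimes_RN)=\Ker(N\to Q\otimes_RN)$ for all $N$; taking $N=R$ forces $q=g(1)$ to have annihilator exactly $I$, but any finitely presented $Q$ is extended from some $k[x_1,\dots,x_n]$, so multiplication by $x_{n+1}$ is injective on $Q$ and no such $q$ exists. The same f.p.\,/\,f.g. confusion also undermines your $(1)\Rightarrow(2)$: the direction ``ML $\Rightarrow$ stabilization'' is in fact true, but the natural proof goes through the FP-property of $\tilde N_i$ (i.e., through (2) itself, so you would be circular), and even granting it, the stabilizing module $N_j$ is only finitely generated, so $\tilde N_i=\Ima(\mathcal F\to\mathcal N_j)$ does \emph{not} yield an exact sequence $0\to\tilde N_i\to\mathcal P\to\mathcal C\to 0$ with $P,C$ finitely presented, and Theorem \ref{4.6N} cannot be invoked. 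The paper never uses the system of finitely generated submodules to detect ML: in $(1)\Rightarrow(2)$ it uses the finitely presented witness $Q$ supplied by the definition together with Proposition \ref{4.75}; in $(3)\Rightarrow(1)$ it writes $M=\ilim{i}P_i$ as a direct limit of finitely presented modules (which are not submodules) and factors the FP-subfunctors through the $\mathcal P_i$; and in $(4)\Rightarrow(1)$ it argues that $\Ima f=\Coker[\mathcal V^*\to\mathcal R^n]$ is an FP-functor by Theorem \ref{4.8} and concludes by Theorem \ref{4.6} --- a route that needs no stabilization and that you could adopt wholesale.

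There is a second, independent gap in your $(1)\Rightarrow(5)$: you present $\Ker(\mathcal N^*\to\mathcal N_j)$ as $[\Coker(\mathcal N_j^*\to\mathcal N)]^*$ and call it a module scheme. Left-exactness of dualization does give $\Ker\phi^*=(\Coker\phi)^*$, but here $\Coker\phi$ is the cokernel of a morphism \emph{from a module scheme to a quasi-coherent module}, which need not be quasi-coherent, so its dual need not be a module scheme. Your own guiding example shows this: for $\phi\colon\mathcal M^*\to\mathcal R$ given by $1\in\mathbb Q=M$ over $R=\mathbb Z$, one gets $(\Coker\phi)^*=\Ker(\mathcal R\to\mathcal M)$, the torsion functor, which is not a module scheme (it does not commute with products, whereas $\mathcal V^*(\prod_iS_i)=\prod_i\mathcal V^*(S_i)$ always). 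Only kernels of morphisms \emph{between} module schemes are module schemes, via $\Ker\varphi^*=(\Coker\varphi)^*$ with $\varphi$ a morphism of quasi-coherent modules; this is precisely why the paper's proof of $(1)\Rightarrow(5)$ in Theorem \ref{5.8} routes everything through free presentations and a morphism $\mathcal V^*\to\mathcal N'^*$, and why the theorem asserts only ``quotient of a module scheme'' rather than ``module scheme''.
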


\begin{theorem}  Let $M$ be an $R$-module. The following statements are equivalent:\begin{enumerate}

\item $M$ is a  strict Mittag-Leffler module.

\item ${\tilde N_i}$  is an FP-functor and the natural morphism $\mathcal M^*\to {{\tilde N_i}}^*$ is an epimorphism, for any $i$.

\item $\mathcal M$ is an $\mathcal R$-submodule 
of some $\mathcal R$-module $\prod_r \mathcal P_r$, where $P_r$ is a finitely presented modules, for every $r$. 

\item  The cokernel of every morphism
$\mathcal M^*\to\mathcal R^n$  is isomorphic  to an $\mathcal R$-submodule of a quasi-coherent module.

\item The cokernel of every morphism
$\mathcal M^*\to\mathcal N$ is isomorphic to an $\mathcal R$-submodule of a quasi-coherent module, for any  $R$-module $N$.

\end{enumerate}

\end{theorem}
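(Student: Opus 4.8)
The plan is to run a cycle of implications, anchored at the equivalence (1) $\Leftrightarrow$ (2), and to exploit throughout the reflexivity $\mathcal M=\mathcal M^{**}$ (see \ref{reflex}) together with the fact that $(-)^{*}=\mathbb Hom_{\mathcal R}(-,\mathcal R)$ is a contravariant left-exact functor, so it sends epimorphisms to monomorphisms, though not conversely. The guiding principle is that this theorem is the formal dual, under $(-)^{*}$, of the Mittag-Leffler theorem above: there the Mittag-Leffler condition was read on the \emph{kernels} of maps $\mathcal N^{*}\to\mathcal M$, while here the strict condition will be read on the \emph{cokernels} of the dual maps $\mathcal M^{*}\to\mathcal N$.

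For (1) $\Leftrightarrow$ (2) I would first recall the module-theoretic definition of strict Mittag-Leffler module from \cite{Garfinkel}: $M$ is strict Mittag-Leffler iff it is Mittag-Leffler and, for every finitely generated submodule $N_i\subseteq M$, every dominating functional on $N_i$ already extends to $M$. Using the adjunction $\mathcal M^{*}(S)=\Hom_S(M\otimes_RS,S)=\Hom_R(M,S)$ and the analogous description of $\tilde N_i^{*}$, this extension property is exactly the assertion that $\mathcal M^{*}\to\tilde N_i^{*}$ is an epimorphism of functors. The Mittag-Leffler half of the definition is, by the Mittag-Leffler theorem above, equivalent to $\tilde N_i$ being an FP-functor for every $i$. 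Splicing the two halves yields (1) $\Leftrightarrow$ (2).

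Next, (2) $\Rightarrow$ (3). Since each $\tilde N_i$ is an FP-functor, the first theorem presents $\tilde N_i^{*}$ as a cokernel $\oplus_k\mathcal P_{ik}^{*}\to\oplus_l\mathcal Q_{il}^{*}$ with $P_{ik},Q_{il}$ finitely presented; dualizing this presentation and using $\mathcal Q_{il}=\mathcal Q_{il}^{**}$ exhibits $\tilde N_i^{**}=\Ker(\prod_l\mathcal Q_{il}\to\prod_k\mathcal P_{ik})$ as a submodule of $\prod_l\mathcal Q_{il}$. On the other hand, dualizing the epimorphism $\mathcal M^{*}\to\tilde N_i^{*}$ gives a monomorphism $\tilde N_i^{**}\hookrightarrow\mathcal M^{**}=\mathcal M$, compatible with the inclusions $\tilde N_i\hookrightarrow\mathcal M$, so that $\mathcal M=\ilim{i}\tilde N_i^{**}$. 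The remaining point is to assemble these data into a single embedding $\mathcal M\hookrightarrow\prod_r\mathcal P_r$; equivalently, one shows that the family of module homomorphisms $M\to Q_{il}$ underlying the maps $\mathcal M\to\prod_l\mathcal Q_{il}$ is jointly universally injective, which is the functorial translation (via $\mathcal M(S)=M\otimes_RS$) of the classical fact that a strict Mittag-Leffler module embeds universally into a product of finitely presented modules.

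Finally I would close the cycle with (3) $\Rightarrow$ (5) $\Rightarrow$ (4) $\Rightarrow$ (1). Here (5) $\Rightarrow$ (4) is the trivial specialization $N=R^{n}$. For (3) $\Rightarrow$ (5), given $\phi\colon\mathcal M^{*}\to\mathcal N$ with cokernel $\mathcal C$, left-exactness of $(-)^{*}$ identifies $\mathcal C^{*}=\Ker(\mathcal N^{*}\xrightarrow{\phi^{*}}\mathcal M)$; the embedding of (3) is used to prove that the canonical map $\mathcal C\to\mathcal C^{**}$ is a monomorphism and that $\mathcal C^{**}$ is quasi-coherent, whence $\mathcal C\hookrightarrow\mathcal C^{**}$ is the desired embedding into a quasi-coherent module. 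For (4) $\Rightarrow$ (1), a map $\psi\colon\mathcal M^{*}\to\mathcal R^{n}$ has $(\Coker\psi)^{*}=\Ker(\mathcal R^{n}\xrightarrow{\psi^{*}}\mathcal M)$, and I would dualize the hypothesis that $\Coker\psi$ embeds in a quasi-coherent module back to the kernel-quotient condition characterizing Mittag-Leffler modules, recovering the Mittag-Leffler property and, by tracking the embedding, the strictness. The main obstacle, recurring in every duality step, is precisely that $(-)^{*}$ is only left-exact: upgrading a monomorphism to an epimorphism on duals, and showing that the relevant cokernels are torsionless (i.e. inject into their biduals), is exactly where the strict Mittag-Leffler hypothesis must be fed in, and where the argument is most delicate.
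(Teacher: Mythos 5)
Your plan has the right global shape (a cycle of implications, systematic use of $\mathcal M=\mathcal M^{**}$ and of the left-exactness of $(-)^*$), and some local computations are correct, e.g. $(\oplus_l\mathcal Q_{il}^*)^*=\prod_l\mathcal Q_{il}$ and hence $\tilde N_i^{**}=\Ker(\prod_l\mathcal Q_{il}\to\prod_k\mathcal P_{ik})$. But each arrow of the cycle is carried by an assertion rather than an argument, and in two places the assertion is circular. First, for (1)$\Leftrightarrow$(2) you say that Garfinkel's definition "is exactly" the statement that $\mathcal M^*\to\tilde N_i^*$ is an epimorphism, "using the adjunction $\mathcal M^*(S)=\Hom_R(M,S)$". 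That adjunction computes the source, but the target $\tilde N_i^*(S)$ has no such elementary description: $\tilde N_i$ is an image functor, not a quasi-coherent module, so a section of $\tilde N_i^*(S)$ is a compatible family of $S'$-linear maps $\Ima(N_i\otimes_R S'\to M\otimes_R S')\to S'$ over all $S$-algebras $S'$. Relating surjectivity of $\mathcal M^*(S)\to\tilde N_i^*(S)$ to a classical condition on $M$ is the actual content of Theorem \ref{5.6}, which the paper proves via the Raynaud--Gruson criterion (comparing $\Ima[\mathcal M^*(S)\to\mathcal F_i^*(S)]$ with $\Ima[\mathcal F_j^*(S)\to\mathcal F_i^*(S)]$ for a presentation $M=\ilim{i} F_i$) together with Lemma \ref{epi}, which in turn rests on Proposition \ref{3.6}; none of this is in your sketch. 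Second, in (2)$\Rightarrow$(3) you only obtain embeddings of $\tilde N_i^{**}$ into products of finitely presented quasi-coherent modules, whereas (3) needs morphisms defined on all of $\mathcal M$: dualizing the epimorphism $\mathcal M^*\to\tilde N_i^*$ gives the monomorphism $\tilde N_i^{**}\hookrightarrow\mathcal M$, not maps out of $\mathcal M$. The missing step is to use the epimorphism a second time, through the dual-functor formalism: it implies that $\overline{\mathcal M^*}(P)=\Hom_R(M,P)\to\overline{\tilde N_i^*}(P)=\Hom_{\mathcal R}(\tilde N_i,\mathcal P)$ is surjective for every $R$-module $P$, which is how the paper manufactures morphisms $s_{f(i)}\colon\mathcal M\to\mathcal P_{f(i)}$ extending monomorphisms $\tilde P_i\hookrightarrow\mathcal P_{f(i)}$ and then checks that $\mathcal M\to\prod_i\mathcal P_{f(i)}$ is a monomorphism because $\mathcal M=\ilim{i}\tilde P_i$. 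Your substitute --- "the classical fact that a strict Mittag-Leffler module embeds universally into a product of finitely presented modules" --- is precisely statement (3), so invoking it begs the question.

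The second half of the cycle has the same defect. In (3)$\Rightarrow$(5) everything hinges on the claims that $\Coker\phi\to(\Coker\phi)^{**}$ is a monomorphism and that $(\Coker\phi)^{**}$ is quasi-coherent; the first is exactly the delicate point and you give no argument for it, and the second claims more than is true in general (the paper never shows such biduals are quasi-coherent; it only embeds the cokernel into a quasi-coherent module, via $\Coker f=\Coker[(\Ima f^*)^*\to\mathcal R^n]\hookrightarrow\mathcal V$, using the exact sequence $\mathcal V^*\to\mathcal R^n\to\Ima f^*\to 0$ coming from Theorem \ref{5.8} and the epimorphism $\mathcal M^*\to(\Ima f^*)^*$ coming from Theorem \ref{5.6}). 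In (4)$\Rightarrow$(1), dualizing the hypothesis does recover the Mittag-Leffler property, but "tracking the embedding" to get strictness is the entire difficulty: the paper's route is that $\Coker f^*$ is an FP-functor by \ref{4.8}, the hypothesis plus Proposition \ref{4.75} gives an exact sequence $0\to\Coker f^*\to\mathcal P\to\mathcal P'\to 0$ with $P,P'$ finitely presented, Proposition \ref{3.6} then makes $\mathcal P^*\to(\Coker f^*)^*=\Ker f$ an epimorphism, and the identity $(\Ima f)^*=\Ker h^*=\Ima f^*$ finally yields the epimorphism $\mathcal M^*\to(\Ima f)^*$ demanded by Theorem \ref{5.6}. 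Without these steps (or genuine equivalents), your cycle does not close: the proposal is a roadmap with gaps at every arrow, not a proof.
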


\section{Preliminaries}

Let $R$ be a commutative ring (associative with a unit). All the  functors considered in this paper are covariant functors from the category of commutative $R$-algebras (always assumed to be associative with a unit) to the category of sets. A functor $\mathbb X$ is said to be a functor of sets (resp. groups, rings,  etc.) if $\mathbb X$ is a functor from the category of  commutative $R$-algebras to the category of sets (resp. groups, rings, etc.).

\begin{notation} \label{nota2.1}For simplicity,
we shall sometimes use $x \in \mathbb X$  to denote $x \in \mathbb X(S)$. Given $x \in \mathbb X(S)$ and a morphism of commutative $R$-algebras $S \to S'$, we shall still denote by $x$ its image by the morphism $\mathbb X(S) \to \mathbb X(S')$.\end{notation}

 An $\mathcal R$-module $\mathbb M$ is a functor of abelian groups endowed with a morphism of functors
$$\mathcal R\times \mathbb M\to\mathbb M$$ satisfying the module axioms (in other words, the morphism $\mathcal R\times \mathbb M\to\mathbb M$ yields an $S$-module structure on $\mathbb M(S)$ for any commutative $R$-algebra $S$).
Let $\mathbb M$ and $\mathbb M'$ be two $\mathcal R$-modules.
 A morphism of $\mathcal R$-modules $f\colon \mathbb M\to \mathbb M'$
 is a morphism of functors  such that the  morphism $f_{S}\colon \mathbb M({S})\to
 \mathbb M'({S})$ defined by $f$ is a morphism of ${S}$-modules, for any commutative $R$-algebra ${S}$.
 We shall denote by $\Hom_{\mathcal R}(\mathbb M,\mathbb M')$ the  family of all the morphisms of $\mathcal R$-modules from $\mathbb M$ to $\mathbb M'$.

\begin{remark} Direct limits, inverse limits of $\mathcal R$-modules and  kernels, cokernels, images, etc.,  of morphisms of $\mathcal R$-modules are regarded in the category of $\mathcal R$-modules.\end{remark}

One has
$$\aligned & 
(\Ker f)({S})=\Ker f_{S},\, (\Coker f)({S})=\Coker f_{S},\, (\Ima f)({S})=\Ima f_{S},\\
&  (\ilim{i\in I} \mathbb M_i)({S})=\ilim{i\in I} (\mathbb M_i({S})),\,
(\plim{j\in J} \mathbb M_j)({S})=\plim{j\in J} (\mathbb M_j({S})),\endaligned $$
(where $I$ is an upward directed set and $J$ a downward directed set).
$\mathbb M\otimes_{\mathcal R}\mathbb M'$ is defined by $(\mathbb M\otimes_{\mathcal R}\mathbb M')({S}):=\mathbb M({S})\otimes_{{S}}\mathbb M'({S})$, for any commutative $R$-algebra ${S}$.

\begin{definition}  Given an $\mathcal R$-module $\mathbb M$ and a commutative $R$-algebra ${S}$,   we shall denote by $\mathbb M_{|{S}}$  the restriction of $\mathbb M$ to the category of commutative ${S}$-algebras, i.e.,  $$\mathbb M_{\mid {S}}(S'):=\mathbb M(S'),$$ for any commutative ${S}$-algebra $S'$.
\end{definition}

We shall denote by ${\mathbb Hom}_{\mathcal R}(\mathbb M,\mathbb M')$\footnote{In this paper, we shall only  consider well-defined functors ${\mathbb Hom}_{\mathcal R}(\mathbb M,\mathbb M')$, that is to say, functors such that $\Hom_{\mathcal {S}}(\mathbb M_{|{S}},\mathbb {M'}_{|{S}})$ is a set, for any ${S}$.} the  $\mathcal R$-module defined by $${\mathbb Hom}_{\mathcal R}(\mathbb M,\mathbb M')({S}):={\rm Hom}_{\mathcal {S}}(\mathbb M_{|{S}}, \mathbb M'_{|{S}}).$$  Obviously,
$$(\mathbb Hom_{\mathcal R}(\mathbb M,\mathbb M'))_{|{S}}=
\mathbb Hom_{\mathcal {S}}(\mathbb M_{|{S}},\mathbb M'_{|{S}}).$$

\begin{notation} \label{nota2.2} Let $\mathbb M$ be an $\mathcal R$-module. We shall denote $\mathbb M^*=\mathbb Hom_{\mathcal R}(\mathbb M,\mathcal R)$.\end{notation}

\begin{proposition} \label{trivial} Let $\mathbb M$ and $\mathbb N$ be two  $\mathcal R$-modules. Then,
$$\Hom_{\mathcal R}(\mathbb M,\mathbb N^*)=\Hom_{\mathcal R}(\mathbb N,\mathbb M^*),\, f\mapsto \tilde f,$$
where $\tilde f$ is defined as follows: $\tilde f(n)(m):=f(m)(n)$, for any $m\in\mathbb M$ and $n\in\mathbb N$.

\end{proposition}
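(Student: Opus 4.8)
The plan is to establish the natural bijection $\Hom_{\mathcal R}(\mathbb M,\mathbb N^*)=\Hom_{\mathcal R}(\mathbb N,\mathbb M^*)$ by unwinding both sides into the same underlying data and checking that the prescribed assignment $f\mapsto\tilde f$ realizes this identification. First I would fix what an element of each side \emph{is}: by the definition of $\mathbb N^*=\mathbb Hom_{\mathcal R}(\mathbb N,\mathcal R)$ together with the footnoted convention and the identity $(\mathbb Hom_{\mathcal R}(\mathbb N,\mathcal R))_{|S}=\mathbb Hom_{\mathcal S}(\mathbb N_{|S},\mathcal S)$, a morphism $f\in\Hom_{\mathcal R}(\mathbb M,\mathbb N^*)$ amounts to giving, for every commutative $R$-algebra $S$ and every $m\in\mathbb M(S)$, an element $f(m)\in\mathbb N^*(S)=\Hom_{\mathcal S}(\mathbb N_{|S},\mathcal S)$, i.e.\ an $S$-linear-and-functorial rule sending each $n\in\mathbb N(S')$ (for $S$-algebras $S'$) to $f(m)(n)\in S'$, all compatibly with base change $S\to S'$. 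The symmetric unwinding of $\Hom_{\mathcal R}(\mathbb N,\mathbb M^*)$ produces the same shape of data with the roles of $m$ and $n$ swapped.

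Next I would observe that both sides are thereby identified with the set of ``bilinear-functorial pairings'' $\varphi$ assigning to each $S$ and each pair $m\in\mathbb M(S)$, $n\in\mathbb N(S)$ an element $\varphi(m,n)\in S$, which is $S$-bilinear and natural in $S$ (and, more precisely, suitably compatible with the base-change behavior encoded by the restrictions $\mathbb M_{|S}$, $\mathbb N_{|S}$). The formula $\tilde f(n)(m):=f(m)(n)$ is exactly the statement that the pairing attached to $f$ equals the pairing attached to $\tilde f$; so the content is that this common pairing determines, and is determined by, a morphism on each side. I would therefore verify three things for $\tilde f$: that for fixed $n$ the rule $m\mapsto\tilde f(n)(m)$ is $S$-linear (inherited from the $S$-linearity of each $f(m)$ being an element of $\mathbb N^*(S)$ and from $f$ being a morphism of $\mathcal R$-modules), that $\tilde f(n)$ is functorial in the $S$-algebra argument so that it genuinely lies in $\mathbb M^*(S)$, and that $n\mapsto\tilde f(n)$ is itself $S$-linear and functorial so that $\tilde f\in\Hom_{\mathcal R}(\mathbb N,\mathbb M^*)$.

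Finally I would check that $f\mapsto\tilde f$ is a bijection, which follows immediately from its evident involutivity: applying the same construction to $\tilde f$ gives $\widetilde{\tilde f}(m)(n)=\tilde f(n)(m)=f(m)(n)$, so $\widetilde{\tilde f}=f$, and symmetrically in the other direction; hence the map is its own two-sided inverse under the symmetric roles of $\mathbb M$ and $\mathbb N$. I expect the only genuine subtlety — the main obstacle, such as it is — to be bookkeeping of the base-change/restriction compatibilities: making sure that when one evaluates $f(m)$ on an $n$ living over a larger algebra $S'$, the value $f(m)(n)$ really is well defined and functorial, using Notation \ref{nota2.1} that lets $m$ denote its own image in $\mathbb M(S')$. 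Once the naturality conditions on both sides are written out explicitly they match term for term, so the proof is essentially a transposition of symbols, with the verification of $\mathcal R$-linearity and functoriality of $\tilde f$ being the routine but necessary checks.
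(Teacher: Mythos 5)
Your proposal is correct, but it takes a more hands-on route than the paper. The paper's entire proof is the one-line chain $\Hom_{\mathcal R}(\mathbb M,\mathbb N^*)=\Hom_{\mathcal R}(\mathbb M\otimes_{\mathcal R}\mathbb N,\mathcal R)=\Hom_{\mathcal R}(\mathbb N,\mathbb M^*)$, i.e.\ two applications of the tensor--hom adjunction for functors of $\mathcal R$-modules, with the tensor product $(\mathbb M\otimes_{\mathcal R}\mathbb N)(S)=\mathbb M(S)\otimes_S\mathbb N(S)$ serving as the symmetric middle object. Your ``bilinear-functorial pairings'' are exactly the elements of $\Hom_{\mathcal R}(\mathbb M\otimes_{\mathcal R}\mathbb N,\mathcal R)$ written out elementwise, so the underlying identification is the same; what differs is that you verify it by hand (linearity and functoriality of $\tilde f$, plus the involutivity $\widetilde{\tilde f}=f$ to get bijectivity) instead of invoking the adjunction, which the paper asserts without proof. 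Your version buys self-containedness, and you correctly isolate the one real subtlety: an element of $\mathbb N^*(S)$ is a morphism $\mathbb N_{|S}\to\mathcal S$ defined on \emph{all} commutative $S$-algebras $S'$, not merely a map $\mathbb N(S)\to S$, so one must check, via naturality of $f$ and Notation \ref{nota2.1}, that the diagonal pairing $\varphi(m,n)$ for $m,n$ over the same $S$ determines the full data $f(m)(n)$ for $n$ living over larger algebras. The paper's version buys brevity and makes the symmetry structurally transparent, at the cost of leaving implicit precisely the bookkeeping you perform.
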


\begin{proof} $\Hom_{\mathcal R}(\mathbb M,\mathbb N^*)=\Hom_{\mathcal R}(\mathbb M\otimes_{\mathcal R}\mathbb N,\mathcal R)=\Hom_{\mathcal R}(\mathbb N,\mathbb M^*)$.

\end{proof}

\begin{proposition} \cite[1.15]{Amel}  \label{adj2} Let $\mathbb M$ be an $\mathcal R$-module, ${S}$ a commutative $R$-algebra and $N$ an ${S}$-module.  Then,
$$\Hom_{\mathcal {S}}(\mathbb M_{|{S}},\mathcal N)=\Hom_{\mathcal R}(\mathbb M,\mathcal N).$$
In particular,
$$\mathbb M^*({S})=\Hom_{\mathcal R}(\mathbb M,\mathcal {S}).$$
\end{proposition}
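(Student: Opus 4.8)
The plan is to produce an explicit, mutually inverse pair of maps between the two $\Hom$-sets. First a warning about notation: on the left $\mathcal N$ is the quasi-coherent $\mathcal S$-module $S'\mapsto N\otimes_S S'$ over commutative $S$-algebras, whereas on the right $\mathcal N$ is the quasi-coherent $\mathcal R$-module $T\mapsto N\otimes_R T$ over commutative $R$-algebras ($N$ being viewed as an $R$-module through $R\to S$); the assertion is precisely that feeding $\mathbb M$ into either one yields the same set of morphisms. Two canonical devices relate these functors. For each commutative $R$-algebra $T$ we have the $S$-algebra $T_S:=S\otimes_R T$ together with the unit $R$-algebra morphism $\eta_T\colon T\to T_S$, $t\mapsto 1\otimes t$; and for each commutative $S$-algebra $S'$ we have the canonical $S'$-linear projection $p_{S'}\colon N\otimes_R S'\to N\otimes_S S'$.

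First I would build $\alpha\colon\Hom_{\mathcal R}(\mathbb M,\mathcal N)\to\Hom_{\mathcal S}(\mathbb M_{|S},\mathcal N)$ by restricting a morphism $\Phi$ to commutative $S$-algebras and postcomposing with the projection, $\alpha(\Phi)_{S'}:=p_{S'}\circ\Phi_{S'}$. In the other direction I would build $\beta\colon\Hom_{\mathcal S}(\mathbb M_{|S},\mathcal N)\to\Hom_{\mathcal R}(\mathbb M,\mathcal N)$ by sending $\psi$ to the morphism whose component at a commutative $R$-algebra $T$ is the composite
$$\mathbb M(T)\xrightarrow{\ \mathbb M(\eta_T)\ }\mathbb M(T_S)\xrightarrow{\ \psi_{T_S}\ }N\otimes_S T_S=N\otimes_S(S\otimes_R T)\cong N\otimes_R T.$$
The idea is that, although $\psi$ only knows about $S$-algebras, base change turns every $R$-algebra $T$ into the $S$-algebra $T_S$ and the unit $\eta_T$ carries elements of $\mathbb M(T)$ into $\mathbb M(T_S)$, where $\psi$ is available. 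Both maps are readily checked to be well defined: $\alpha(\Phi)$ is $\mathcal S$-linear and natural by construction, while $\beta(\psi)$ is $T$-linear because $\mathbb M(\eta_T)$ and $\psi_{T_S}$ are, and natural in $T$ because $\eta$ is natural and $\psi$ is natural over $S$-algebras, compatibly with the canonical isomorphism $N\otimes_S(S\otimes_R-)\cong N\otimes_R(-)$.

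The crux, and the step I expect to be the main obstacle since it is pure bookkeeping across the two meanings of $\mathcal N$ and the identification $N\otimes_S(S\otimes_R T)\cong N\otimes_R T$, is that $\alpha$ and $\beta$ are mutually inverse. For $\beta\circ\alpha=\mathrm{id}$ I would start from $\Phi\in\Hom_{\mathcal R}(\mathbb M,\mathcal N)$ and use the naturality of $\Phi$ along $\eta_T$ to replace $\Phi_{T_S}\circ\mathbb M(\eta_T)$ by $\mathcal N(\eta_T)\circ\Phi_T$; since postcomposing $\mathcal N(\eta_T)$ with the projection $p_{T_S}$ and the canonical isomorphism is the identity of $N\otimes_R T$, the composite collapses to $\Phi_T$. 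For $\alpha\circ\beta=\mathrm{id}$ I would start from $\psi$ and use the triangle identity $\mu_{S'}\circ\eta_{S'}=\mathrm{id}_{S'}$, where $\mu_{S'}\colon S\otimes_R S'\to S'$ is multiplication, together with the naturality of $\psi$ along $\mu_{S'}$ and the fact that, under the same identification, $\mathcal N(\mu_{S'})$ is exactly the projection $p_{S'}$; this returns $\psi_{S'}$. Finally, the ``in particular'' clause is the case $N=S$: on the left $\mathcal N$ becomes $S'\mapsto S\otimes_S S'=S'$, i.e. $\mathcal R_{|S}$, so that $\Hom_{\mathcal S}(\mathbb M_{|S},\mathcal R_{|S})=\mathbb M^*(S)$ by Notation \ref{nota2.2} and the definition of $\mathbb Hom_{\mathcal R}$, and the general equality specializes to $\mathbb M^*(S)=\Hom_{\mathcal R}(\mathbb M,\mathcal S)$.
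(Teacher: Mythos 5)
Your proof is correct, but there is nothing in the paper to compare it against: the paper imports this proposition from \cite[1.15]{Amel} and gives no argument of its own. Your argument is the natural self-contained one, namely the restriction/base-change adjunction for the ring morphism $R\to S$ made explicit: your maps $\alpha$ and $\beta$ are induced by the unit $\eta_T\colon T\to S\otimes_R T$ and the counit (multiplication) $\mu_{S'}\colon S\otimes_R S'\to S'$, and the two composites collapse precisely by naturality plus the triangle identity $\mu_{S'}\circ\eta_{S'}=\mathrm{id}_{S'}$, together with the cancellation $N\otimes_S(S\otimes_R T)\cong N\otimes_R T$. The individual verifications all check out: $\beta(\psi)$ is well defined because $S\otimes_R T$ is an $S$-algebra via the first factor and the cancellation isomorphism is $T$-linear and natural; and the key bookkeeping identity you isolate, that $\mathcal N(\mu_{S'})$ becomes the projection $p_{S'}\colon N\otimes_R S'\to N\otimes_S S'$ under the cancellation isomorphism, holds because $sn\otimes_S s'=n\otimes_S ss'$ in $N\otimes_S S'$. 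The \emph{in particular} clause is also handled correctly: for $N=S$ the left-hand functor is $\mathcal R_{|S}$, so the left-hand $\Hom$ is $\mathbb Hom_{\mathcal R}(\mathbb M,\mathcal R)(S)=\mathbb M^*(S)$ by definition, while the right-hand side is $\Hom_{\mathcal R}(\mathbb M,\mathcal S)$ with $\mathcal S$ the quasi-coherent $\mathcal R$-module $T\mapsto S\otimes_R T$. In short: a correct proof, supplied where the paper offers only a citation, and almost certainly the same mechanism as in the cited source.
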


\subsection{Quasi-coherent modules}

\begin{definition} Let $M$ (resp. $N$,  $V$, etc.) be an $R$-module. We shall denote by  ${\mathcal M}$  (resp. $\mathcal N$, $\mathcal V$, etc.)   the $\mathcal R$-module defined by ${\mathcal M}({S}) := M \otimes_R {S}$ (resp. $\mathcal N({S}):=N\otimes_R {S}$,  $\mathcal V({S}):=V\otimes_R {S}$, etc.). $\mathcal M$  will be called the quasi-coherent $\mathcal R$-module associated with $M$. 
\end{definition}

 ${\mathcal M}_{\mid {S}}$ is the quasi-coherent $\mathcal {S}$-module associated with $M \otimes_R
{S}$.  For any pair of $R$-modules $M$ and $N$, the quasi-coherent module associated with $M\otimes_R N$ is $\mathcal M\otimes_{\mathcal R}\mathcal N$.

\begin{proposition} \cite[1.12]{Amel}  The functors
$$\aligned \text{Category of $R$-modules } & \to \text{ Category of quasi-coherent $\mathcal R$-modules }\\ M & \mapsto \mathcal M\\ \mathcal M(R) & \leftarrow\!\shortmid \mathcal M\endaligned$$
stablish an equivalence  of categories. In particular,
$${\rm Hom}_{\mathcal R} ({\mathcal M},{\mathcal M'}) = {\rm Hom}_R (M,M').$$
\end{proposition}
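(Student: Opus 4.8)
The plan is to show that the functor $F\colon M\mapsto \mathcal M$ is fully faithful and essentially surjective, with quasi-inverse $\mathcal M\mapsto \mathcal M(R)$; the displayed equality $\Hom_{\mathcal R}(\mathcal M,\mathcal M')=\Hom_R(M,M')$ is precisely full faithfulness, so the bulk of the work is to establish it.

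First I would write down the two candidate maps between $\Hom_R(M,M')$ and $\Hom_{\mathcal R}(\mathcal M,\mathcal M')$. To an $R$-linear map $\phi\colon M\to M'$ I associate the morphism of $\mathcal R$-modules whose component at $S$ is $\phi\otimes \mathrm{id}_S\colon M\otimes_R S\to M'\otimes_R S$; these components are $S$-linear and natural in $S$ by functoriality of the tensor product, so they do define a morphism of $\mathcal R$-modules. In the other direction, to a morphism of $\mathcal R$-modules $f\colon \mathcal M\to \mathcal M'$ I associate $f_R\colon M\otimes_R R\to M'\otimes_R R$, that is, the $R$-linear map $f_R\colon M\to M'$ obtained after the canonical identifications $M\otimes_R R\cong M$ and $M'\otimes_R R\cong M'$.

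The key step is to check that these two assignments are mutually inverse, and everything reduces to the identity $f_S=f_R\otimes \mathrm{id}_S$ for every $R$-algebra $S$. I would prove this by applying naturality of $f$ to the structural morphism $R\to S$: the corresponding naturality square forces $f_S(m\otimes 1)=f_R(m)\otimes 1$ for every $m\in M$. Since $f_S$ is $S$-linear and $M\otimes_R S$ is generated as an $S$-module by the pure tensors $m\otimes 1$, writing an arbitrary element as $\sum_i s_i\,(m_i\otimes 1)$ and applying $S$-linearity gives $f_S(\sum_i m_i\otimes s_i)=\sum_i f_R(m_i)\otimes s_i$, i.e. $f_S=f_R\otimes \mathrm{id}_S$. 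This is the only non-formal point; once it is in hand, both round trips are immediate, proving the Hom-formula.

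Finally, essential surjectivity is built into the definition: every quasi-coherent $\mathcal R$-module is of the form $\mathcal N$ for some $R$-module $N$, and $\mathcal N(R)=N\otimes_R R\cong N$, so $F$ is surjective on objects onto the quasi-coherent modules and $\mathcal M\mapsto \mathcal M(R)$ is a quasi-inverse. Combining full faithfulness with essential surjectivity yields the asserted equivalence of categories. The main obstacle is the identity $f_S=f_R\otimes\mathrm{id}_S$; all remaining verifications are routine consequences of the functoriality of $\otimes_R$ and the generation of $M\otimes_R S$ over $S$ by the elements $m\otimes 1$.
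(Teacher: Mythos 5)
Your proof is correct and complete: the key identity $f_S=f_R\otimes\mathrm{id}_S$, obtained from naturality along the structure map $R\to S$ together with $S$-linearity of $f_S$ and the fact that $M\otimes_R S$ is generated over $S$ by the tensors $m\otimes 1$, is exactly what makes the two assignments mutually inverse, and essential surjectivity is indeed automatic from the definition of the category of quasi-coherent modules. Note that the paper itself gives no proof of this proposition --- it is quoted from the reference [\'Alvarez--Sancho--Sancho, {\it Algebra schemes and their representations}, 1.12] --- and your argument is the standard one that the cited source uses, so there is nothing to compare beyond saying that your reconstruction fills the gap faithfully.
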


Let $f\colon M\to N$ be a morphism of $R$-modules and $\tilde f\colon \mathcal M\to \mathcal N$
the associated morphism of $\mathcal R$-modules. Let $C=\Coker f$, then $\Coker\tilde f=\mathcal C$, which is a quasi-coherent module.

\begin{proposition} \cite[1.3]{Amel}\label{tercer}
For every  ${\mathcal R}$-module $\mathbb M$ and every $R$-module $M$, it is satisfied that
$${\rm Hom}_{\mathcal R} ({\mathcal M}, \mathbb M) = {\rm Hom}_R (M, \mathbb M(R)),\, f\mapsto f_R.$$
\end{proposition}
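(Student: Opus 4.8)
The map $f\mapsto f_R$ is well defined: since $\mathcal M(R)=M\otimes_R R=M$ and $f$ is a morphism of $\mathcal R$-modules, $f_R\colon M\to\mathbb M(R)$ is a morphism of $R$-modules. The plan is to exhibit an explicit inverse, thereby realizing the assignment $M\mapsto\mathcal M$ as a left adjoint to the evaluation functor $\mathbb M\mapsto\mathbb M(R)$. The statement is then the corresponding adjunction isomorphism, and the only work is to write the inverse down and check that it produces a genuine morphism of $\mathcal R$-modules.

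For the construction of the inverse, let $g\colon M\to\mathbb M(R)$ be a morphism of $R$-modules. For each commutative $R$-algebra $S$ let $j_S\colon R\to S$ be its structural morphism, so that $\mathbb M(j_S)\colon\mathbb M(R)\to\mathbb M(S)$ is the associated transition map. I would define
$$g_S\colon M\otimes_R S\to\mathbb M(S),\qquad g_S(m\otimes s):=s\cdot\mathbb M(j_S)(g(m)),$$
where $s\cdot(-)$ denotes the $S$-module structure of $\mathbb M(S)$. First I would check that $(m,s)\mapsto s\cdot\mathbb M(j_S)(g(m))$ is $R$-balanced, which reduces to the fact that $\mathbb M(j_S)$ intertwines the $R$-action on $\mathbb M(R)$ with the $R$-action on $\mathbb M(S)$ pulled back along $j_S$; this makes $g_S$ a well-defined $S$-linear map.

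Next I would verify that the family $\tilde g:=(g_S)_S$ is a morphism of functors, i.e. that for every morphism of $R$-algebras $\varphi\colon S\to S'$ one has $\mathbb M(\varphi)\circ g_S=g_{S'}\circ\mathcal M(\varphi)$. This follows from the functoriality of $\mathbb M$ together with the compatibility of $\mathbb M(\varphi)$ with the module structures and the identity $\varphi\circ j_S=j_{S'}$. Since each $g_S$ is $S$-linear, $\tilde g$ is then a morphism of $\mathcal R$-modules. Finally I would check the two round trips: on the one hand $(\tilde g)_R=g$, because $j_R=\mathrm{id}$ and $m\otimes 1\mapsto g(m)$; on the other hand, starting from $f\colon\mathcal M\to\mathbb M$ and setting $g=f_R$, naturality of $f$ along $j_S$ gives $\mathbb M(j_S)(f_R(m))=f_S(m\otimes 1)$, whence $g_S(m\otimes s)=s\cdot f_S(m\otimes 1)=f_S(m\otimes s)$ by the $S$-linearity of $f_S$; thus $\tilde g=f$.

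The main obstacle is purely bookkeeping: one must keep straight the two $R$-module structures on $\mathbb M(S)$ (its intrinsic one and the one pulled back along $j_S$) and use at each step that the transition maps of $\mathbb M$ respect scalar multiplication. Once this compatibility is invoked, the well-definedness on the tensor product, the naturality, and the inverse identities are all immediate. Alternatively, one could reduce to the free case, where $\Hom_{\mathcal R}(\mathcal R^{(I)},\mathbb M)=\mathbb M(R)^I=\Hom_R(R^{(I)},\mathbb M(R))$ is clear, and then pass to a presentation $R^{(J)}\to R^{(I)}\to M\to 0$ using the right exactness of $M\mapsto\mathcal M$ and the left exactness of $\Hom_{\mathcal R}(-,\mathbb M)$; but the direct construction above is shorter and avoids exactness arguments for a general $\mathbb M$.
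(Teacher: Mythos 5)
Your proof is correct and is the standard argument for this statement: you extend a linear map $g\colon M\to\mathbb M(R)$ to a morphism of functors by $S$-linearity along the structure morphisms $j_S\colon R\to S$, using that the transition maps of $\mathbb M$ intertwine the scalar actions (naturality of $\mathcal R\times\mathbb M\to\mathbb M$), and then check the two round trips via the naturality of $f$ along $j_S$. The paper itself gives no proof here --- it cites \cite[1.3]{Amel} --- and the proof in that reference is this same extension-by-linearity computation, so your argument matches the intended one; the only point worth stressing (which you do handle) is that well-definedness on the tensor product requires precisely the compatibility $\mathbb M(j_S)(r\cdot x)=j_S(r)\cdot\mathbb M(j_S)(x)$, i.e.\ the module structure being a morphism of functors.
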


\begin{notation} Let $\mathbb M$ be an $\mathcal R$-module. 
We shall denote by $\mathbb M_{qc}$ the quasi-coherent module associated with
the $R$-module $\mathbb M(R)$, that is, $$\mathbb M_{qc}({S}):=\mathbb M(R)\otimes_R{S}.$$\end{notation}

\begin{proposition} \label{tercerb} For each  $\mathcal R$-module $\mathbb M$ one has the natural morphism $$\mathbb M_{qc}\to \mathbb M, \,m\otimes s\mapsto s\cdot m,$$ for any $m\otimes s\in \mathbb M_{qc}({S})=\mathbb M(R)\otimes_R {S}$, and a functorial equality 
$$\Hom_{\mathcal R}(\mathcal N,\mathbb M_{qc}))=\Hom_{\mathcal R}(\mathcal N,\mathbb M),$$
for any quasi-coherent $\mathcal R$-module $\mathcal N$.
\end{proposition}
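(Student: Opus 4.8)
The plan is to handle the two assertions in turn: first exhibit the natural morphism $\pi\colon\mathbb M_{qc}\to\mathbb M$, then deduce the $\Hom$-equality formally from Proposition \ref{tercer}.

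For the morphism, I would fix a commutative $R$-algebra $S$ and consider the assignment $\mathbb M(R)\times S\to\mathbb M(S)$, $(m,s)\mapsto s\cdot m$, where $m$ is viewed in $\mathbb M(S)$ via the structural map $\mathbb M(R)\to\mathbb M(S)$ (Notation \ref{nota2.1}) and $s\in S=\mathcal R(S)$ acts through the $S$-module structure on $\mathbb M(S)$. Additivity in each variable and compatibility with the $R$-action are immediate from the module axioms for $\mathbb M(S)$, so this map is $R$-bilinear and factors through $\mathbb M(R)\otimes_R S=\mathbb M_{qc}(S)$, yielding an $S$-linear map $\pi_S\colon\mathbb M_{qc}(S)\to\mathbb M(S)$. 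Naturality of the structural maps and of the module structures in $S$ shows that the $\pi_S$ assemble into a morphism of $\mathcal R$-modules $\pi=(\pi_S)_S$. This part is entirely routine.

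For the equality, the decisive observation is that $\mathbb M_{qc}(R)=\mathbb M(R)\otimes_R R=\mathbb M(R)$, and that under this canonical identification $\pi_R$ is the identity: indeed $\pi_R(m\otimes r)=r\cdot m$ is precisely $m\otimes r\mapsto rm$, which is the canonical isomorphism $\mathbb M(R)\otimes_R R=\mathbb M(R)$. Since Proposition \ref{tercer} applies to every $\mathcal R$-module (in particular to the quasi-coherent module $\mathbb M_{qc}$ as well as to $\mathbb M$), I obtain
$$\Hom_{\mathcal R}(\mathcal N,\mathbb M_{qc})=\Hom_R(N,\mathbb M_{qc}(R))=\Hom_R(N,\mathbb M(R))=\Hom_{\mathcal R}(\mathcal N,\mathbb M).$$
The map induced by $\pi$, namely $g\mapsto\pi\circ g$, corresponds under these two instances of \ref{tercer} to $h\mapsto\pi_R\circ h$ (because $(\pi\circ g)_R=\pi_R\circ g_R$), and since $\pi_R=\mathrm{id}$ this is the identity on $\Hom_R(N,\mathbb M(R))$; hence composition with $\pi$ is the claimed bijection, visibly functorial in $\mathcal N$.

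I do not expect a genuine obstacle. The only point deserving care is confirming that the map appearing in the statement is the one induced by $\pi$ and that, through the two applications of \ref{tercer}, it matches the identity on $\Hom_R(N,\mathbb M(R))$ — but this is forced by the explicit description $f\mapsto f_R$ in \ref{tercer} together with the computation $\pi_R=\mathrm{id}$, so it amounts to naturality of \ref{tercer} in the target module rather than to any real difficulty.
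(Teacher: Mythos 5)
Your proposal is correct and follows essentially the same route as the paper: the paper's entire proof is the chain $\Hom_{\mathcal R}(\mathcal N,\mathbb M)\overset{\text{\ref{tercer}}}=\Hom_R(N,\mathbb M(R))\overset{\text{\ref{tercer}}}=\Hom_{\mathcal R}(\mathcal N,\mathbb M_{qc})$, which is exactly your double application of Proposition \ref{tercer} via the identification $\mathbb M_{qc}(R)=\mathbb M(R)$. Your additional verifications --- the explicit construction of $\pi$ and the check that the resulting bijection is composition with $\pi$ (using $\pi_R=\mathrm{id}$) --- are details the paper leaves implicit, and they are carried out correctly.
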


\begin{proof} Observe that $\Hom_{\mathcal R}(\mathcal N,\mathbb M)\overset{\text{\ref{tercer}}}=\Hom_R(N,\mathbb M(R))\overset{\text{\ref{tercer}}}=\Hom_{\mathcal R}(\mathcal N,\mathbb M_{qc})$.

\end{proof}

Obviously, an $\mathcal R$-module $\mathbb M$ is a quasi-coherent module iff
the natural morphism $\mathbb M_{qc}\to \mathbb M$ is an isomorphism.

\begin{theorem}  \cite[1.8]{Amel}\label{prop4} 
Let $M$ and $M'$ be $R$-modules. Then, $${\mathcal M} \otimes_{\mathcal R} {\mathcal M'}={\mathbb Hom}_{\mathcal R} ({\mathcal M^*}, {\mathcal M'}),\, m\otimes m'\mapsto \tilde{m\otimes m'},$$
where $ \tilde{m\otimes m'}(w):=w(m)\cdot m'$, for any $w\in \mathcal M^*$.
\end{theorem}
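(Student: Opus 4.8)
The plan is to show that the displayed morphism
$\Phi\colon \mathcal M\otimes_{\mathcal R}\mathcal M'\to \mathbb Hom_{\mathcal R}(\mathcal M^*,\mathcal M')$, $m\otimes m'\mapsto \widetilde{m\otimes m'}$, is an isomorphism by evaluating on $S$-points and reducing everything, via base change, to bijectivity over the base ring itself. First I would check that $\Phi$ is a well-defined morphism of $\mathcal R$-modules: for a fixed $\xi=\sum_k m_k\otimes m_k'$ the assignment $w\mapsto \sum_k w(m_k)\,m_k'$ is $S$-linear in $w\in\mathcal M^*(S)$ and natural in $S$ (hence $\mathcal R$-linear), and $\xi\mapsto\widetilde\xi$ is $R$-bilinear. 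Next, using the restriction compatibility $(\mathbb Hom_{\mathcal R}(\mathbb M,\mathbb M'))_{|S}=\mathbb Hom_{\mathcal S}(\mathbb M_{|S},\mathbb M'_{|S})$ together with $\mathcal R_{|S}=\mathcal S$ and the fact that $\mathcal M_{|S}$ is the quasi-coherent $\mathcal S$-module associated with $M\otimes_RS$, I get $(\mathcal M^*)_{|S}=(\mathcal M_{|S})^*$ and therefore $\mathbb Hom_{\mathcal R}(\mathcal M^*,\mathcal M')(S)=\Hom_{\mathcal S}((\mathcal M_{|S})^*,\mathcal M'_{|S})$. Thus $\Phi_S$ is precisely the map $\Phi_R$ attached to the data $(S,M\otimes_RS,M'\otimes_RS)$ over the base ring $S$, and it suffices to prove that, for every commutative ring $R$ and all $R$-modules $M,M'$, the map $\Phi_R\colon M\otimes_RM'\to \Hom_{\mathcal R}(\mathcal M^*,\mathcal M')$ is bijective.

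The key tool is that $\mathcal M^*$ is representable as a functor of sets. By \ref{adj2} and \ref{tercer} one has $\mathcal M^*(S)=\Hom_{\mathcal R}(\mathcal M,\mathcal S)=\Hom_R(M,S)$, so $\mathcal M^*=\Hom_{R\text{-alg}}(\mathrm{Sym}_RM,-)$ is represented by the symmetric algebra $S_0:=\mathrm{Sym}_RM$, with universal element $w_0\colon M\hookrightarrow S_0$ the degree-one inclusion. By Yoneda, every morphism of functors of sets $\phi\colon\mathcal M^*\to\mathcal M'$ is determined by and corresponds to the element $\eta:=\phi_{S_0}(w_0)\in\mathcal M'(S_0)=M'\otimes_RS_0$; and a direct computation gives $\Phi_R(\xi)_{S_0}(w_0)=\sum_k m_k'\otimes m_k$, the image of $\xi$ under the symmetry $M\otimes_RM'\cong M'\otimes_RM\subset M'\otimes_RS_0$ into degree one. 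This already shows $\Phi_R$ is injective, since the universal element recovers $\xi$; it remains to identify its image, i.e. to show that $\phi$ is $\mathcal R$-linear if and only if $\eta$ lies in the degree-one summand $M'\otimes_RM$.

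The main step, and the crux of the argument, is this homogeneity of $\eta$. Writing $\eta=\sum_{d\ge0}\eta_d$ with $\eta_d\in M'\otimes_R\mathrm{Sym}^dM$, I would exploit the grading of $S_0$ through the scalar $t$ in $S_0[t]$. The $R$-algebra map $\sigma\colon S_0\to S_0[t]$ that multiplies $\mathrm{Sym}^dM$ by $t^d$ sends $w_0$ to $\sigma\circ w_0=t\cdot w_0$, where the latter is scalar multiplication by $t\in\mathcal R(S_0[t])$. Naturality of $\phi$ along $\sigma$ gives $\phi_{S_0[t]}(t\cdot w_0)=(1\otimes\sigma)(\eta)=\sum_d t^d\eta_d$, while $\mathcal R$-linearity of $\phi$ gives $\phi_{S_0[t]}(t\cdot w_0)=t\cdot\phi_{S_0[t]}(w_0)=\sum_d t\,\eta_d$ (using $\phi_{S_0[t]}(w_0)=\eta$ by naturality along $S_0\hookrightarrow S_0[t]$); comparing coefficients of each $t^d$ forces $\eta_d=0$ for $d\ne1$, so $\eta\in M'\otimes_RM$. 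Conversely, every $\xi\in M\otimes_RM'$ produces a genuinely $\mathcal R$-linear $\Phi_R(\xi)$ whose universal element $\sum_k m_k'\otimes m_k$ sits in degree one, so $\Phi_R$ lands exactly in the $\mathcal R$-linear morphisms and, combined with the Yoneda bijection, is a bijection onto them. This proves bijectivity of $\Phi_R$, and by the base-change reduction of the first paragraph the theorem follows. The delicate point to get right is precisely this reconciliation of $\mathcal R$-linearity with the internal grading of $\mathrm{Sym}_RM$ (together with the legitimacy of invoking Yoneda in the set-valued category before imposing linearity); the remaining verifications are formal.
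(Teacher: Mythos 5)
The paper never proves Theorem \ref{prop4}; it is imported wholesale as a citation to \cite[1.8]{Amel}, so there is no internal proof to compare yours against --- what follows is an assessment of your argument on its own merits. Your proof is correct and complete. The base-change reduction in your first paragraph is legitimate: $(\mathbb Hom_{\mathcal R}(\mathbb M,\mathbb M'))_{|S}=\mathbb Hom_{\mathcal S}(\mathbb M_{|S},\mathbb M'_{|S})$ gives $(\mathcal M^*)_{|S}=(\mathcal M_{|S})^*$, so it suffices to prove $\Hom_{\mathcal R}(\mathcal M^*,\mathcal M')=M\otimes_R M'$ over an arbitrary base. The identification $\mathcal M^*(S)=\Hom_{\mathcal R}(\mathcal M,\mathcal S)=\Hom_R(M,S)$ (via \ref{adj2} and \ref{tercer}) does make $\mathcal M^*$ representable by $\mathrm{Sym}_R M$ as a functor of sets, so Yoneda applies; and the crux --- that $\mathcal R$-linearity forces the universal element $\eta=\sum_d\eta_d\in M'\otimes_R\mathrm{Sym}_RM$ into degree one --- is handled correctly: comparing $\sum_d t^d\eta_d$ (naturality along the rescaling map $\sigma$) with $t\sum_d\eta_d$ (scalar compatibility plus naturality along the inclusion $S_0\hookrightarrow S_0[t]$), and equating coefficients of $t^0$ and of $t^d$ for $d\ge 2$, kills every $\eta_d$ with $d\neq 1$, since the $\eta_d$ lie in distinct direct summands. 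This representability-plus-grading mechanism (in effect, using the $\mathbb G_m$-action on the module scheme to isolate the linear part of its function algebra) is the classical route to this duality, going back to Demazure--Gabriel's proof of $\mathcal M=\mathcal M^{**}$ and presumably to the proof in the cited source, so you have reconstructed the standard argument rather than found a genuinely different one. One incidental remark: your argument only invokes compatibility of $\phi$ with scalar multiplication, never additivity, so it proves the slightly stronger fact that any natural transformation of set-valued functors $\mathcal M^*\to\mathcal M'$ commuting with the $\mathcal R$-action is already of the form $\tilde{m\otimes m'}$ (and hence additive); this is not a gap, just a sharper conclusion.
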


\begin{note} \label{2.12N} In particular, ${\Hom}_{\mathcal R} ({\mathcal M^*}, {\mathcal M'})={M} \otimes_{R} { M'}$, and it is easy to prove that the morphism $f=\sum_i m_i\otimes m'_i\in {\Hom}_{\mathcal R} ({\mathcal M^*}, {\mathcal M'})={M} \otimes_{R} { M'}$ factors through the quasi-coherent module associated with the submodule $\langle m'_i\rangle \subseteq M'$.

\end{note}

%
%

If we make $\mathcal M'=\mathcal R$ in the previous theorem, we obtain the following theorem.

\begin{theorem} \cite[II,\textsection 1,2.5]{gabriel}  \cite[1.10]{Amel}\label{reflex}
Let $M$ be an $R$-module. Then, $${\mathcal M}={\mathcal M^{**}}.$$
\end{theorem}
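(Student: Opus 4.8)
The plan is to obtain this statement as the special case $\mathcal M'=\mathcal R$ of Theorem \ref{prop4}, exactly as the sentence preceding the statement already signals. Theorem \ref{prop4} asserts a natural isomorphism $\mathcal M\otimes_{\mathcal R}\mathcal M' = \mathbb Hom_{\mathcal R}(\mathcal M^*,\mathcal M')$ for all $R$-modules $M,M'$, and all the content of the reflexivity statement is packaged there; the present theorem requires only that both sides collapse correctly when $\mathcal M'$ is taken to be $\mathcal R$. So the entire proof reduces to checking two identifications.

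First I would note that $\mathcal R$ is legitimately a quasi-coherent module, namely the one associated with the $R$-module $R$ itself, since $\mathcal R(S)=S=R\otimes_R S$; hence $\mathcal R$ is of the form $\mathcal M'$ with $M'=R$ and Theorem \ref{prop4} does apply to it. I would then simplify the right-hand side: by Notation \ref{nota2.2} we have $\mathbb Hom_{\mathcal R}(\mathcal M^*,\mathcal R)=(\mathcal M^*)^*=\mathcal M^{**}$, so the right-hand side of \ref{prop4} is already the double dual we are after.

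Next I would simplify the left-hand side. Evaluating on an arbitrary commutative $R$-algebra $S$ gives $(\mathcal M\otimes_{\mathcal R}\mathcal R)(S)=\mathcal M(S)\otimes_S\mathcal R(S)=(M\otimes_R S)\otimes_S S=M\otimes_R S=\mathcal M(S)$, and these identifications are natural in $S$, so $\mathcal M\otimes_{\mathcal R}\mathcal R=\mathcal M$. Combining this with the isomorphism of \ref{prop4} and the identification of the right-hand side yields $\mathcal M=\mathcal M^{**}$, as claimed.

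I do not expect a genuine obstacle here, since the work is entirely done by Theorem \ref{prop4}; the statement is a clean specialization. The one point worth confirming is that the resulting isomorphism is indeed the \emph{canonical} one rather than merely an abstract isomorphism: under \ref{prop4} the map sends $m\otimes m'$ to the morphism $w\mapsto w(m)\cdot m'$, and I would trace this through the identifications $\mathcal M\otimes_{\mathcal R}\mathcal R\cong\mathcal M$ and $\mathbb Hom_{\mathcal R}(\mathcal M^*,\mathcal R)=\mathcal M^{**}$ to verify that it reduces to the standard double-dual evaluation $m\mapsto(w\mapsto w(m))$. This bookkeeping is the only slightly delicate part, and it is routine.
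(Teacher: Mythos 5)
Your proposal is correct and is exactly the paper's own argument: the paper proves Theorem \ref{reflex} by the single remark that setting $\mathcal M'=\mathcal R$ in Theorem \ref{prop4} yields the statement, which is precisely your specialization. Your additional bookkeeping (identifying $\mathcal M\otimes_{\mathcal R}\mathcal R=\mathcal M$, $\mathbb Hom_{\mathcal R}(\mathcal M^*,\mathcal R)=\mathcal M^{**}$, and checking the map reduces to the canonical evaluation $m\mapsto(w\mapsto w(m))$) just makes explicit what the paper leaves implicit.
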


\begin{definition} Let $\mathbb M$ be  an $\mathcal R$-module. We shall say that
$\mathbb M^*$ is a dual functor.
We shall say that  an $\mathcal R$-module  ${\mathbb M}$ is reflexive if ${\mathbb M}={\mathbb M}^{**}$.\end{definition}

\begin{example}  Quasi-coherent modules are reflexive.\end{example}

\subsection{$\mathcal R$-module schemes} 

\begin{definition} Let $M$ be an $R$-module. $\mathcal M^*$  will be called the $\mathcal R$-module scheme associated with $M$.
\end{definition}

\begin{definition} Let $\mathbb N$ be an $\mathcal R$-module. 
We shall denote by $\mathbb N_{sch}$ the $\mathcal R$-module scheme defined by $$\mathbb N_{sch}:=((\mathbb N^*)_{qc})^*.$$\end{definition}

\begin{proposition} \label{1211} Let $\mathbb N$ be a functor of $R$-modules. Then,  we have a canonical morphism $\mathbb N\to \mathbb N_{sch}$ and
\begin{enumerate}
\item $\Hom_{\mathcal R}(\mathbb N,\mathcal M^*)=
\Hom_{\mathcal R}(\mathbb N_{sch},\mathcal M^*),$
for any module scheme $\mathcal M^*$.

\item $
\Hom_{\mathcal R}(\mathbb N_{sch},\mathcal M)=\mathbb N^*(R)\otimes_R M$,
for any quasi-coherent module $\mathcal M$.

\end{enumerate}

\end{proposition}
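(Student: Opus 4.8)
The plan is to start by making $\mathbb N_{sch}$ completely explicit. Writing $P:=\mathbb N^*(R)$, the definition of the quasi-coherent-ification gives $(\mathbb N^*)_{qc}=\mathcal P$, so that $\mathbb N_{sch}=((\mathbb N^*)_{qc})^*=\mathcal P^*$ is literally a module scheme. This single observation reduces both assertions to standard manipulations with $\mathcal P^*$. To produce the canonical morphism $\mathbb N\to\mathbb N_{sch}$, I would dualize the natural morphism $(\mathbb N^*)_{qc}\to\mathbb N^*$ supplied by Proposition \ref{tercerb}, obtaining $\mathbb N^{**}=(\mathbb N^*)^*\to((\mathbb N^*)_{qc})^*=\mathbb N_{sch}$, and then precompose with the canonical biduality morphism $\mathbb N\to\mathbb N^{**}$.

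For part (1), I would compute both $\Hom$-sets by independent chains of the natural isomorphisms already available and check that they land on the same object. On the source side, Proposition \ref{trivial} gives $\Hom_{\mathcal R}(\mathbb N,\mathcal M^*)=\Hom_{\mathcal R}(\mathcal M,\mathbb N^*)$, and then Proposition \ref{tercer} gives $\Hom_{\mathcal R}(\mathcal M,\mathbb N^*)=\Hom_R(M,\mathbb N^*(R))=\Hom_R(M,P)$. On the target side, Proposition \ref{trivial} applied to $\mathcal P^*$ gives $\Hom_{\mathcal R}(\mathbb N_{sch},\mathcal M^*)=\Hom_{\mathcal R}(\mathcal P^*,\mathcal M^*)=\Hom_{\mathcal R}(\mathcal M,\mathcal P^{**})$, and Theorem \ref{reflex} together with the equivalence of categories turns this into $\Hom_{\mathcal R}(\mathcal M,\mathcal P)=\Hom_R(M,P)$. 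Since both sides are naturally identified with $\Hom_R(M,P)=\Hom_R(M,\mathbb N^*(R))$, the equality of (1) follows.

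Part (2) is immediate from Note \ref{2.12N}: applying it to $\mathcal P^*$ and the quasi-coherent module $\mathcal M$ yields $\Hom_{\mathcal R}(\mathbb N_{sch},\mathcal M)=\Hom_{\mathcal R}(\mathcal P^*,\mathcal M)=P\otimes_R M=\mathbb N^*(R)\otimes_R M$, as claimed. The one point that I expect to require genuine care, rather than formal bookkeeping, is the \emph{naturality} in part (1): I must verify that the composite set-theoretic bijection obtained from the two chains is exactly ``compose with the canonical morphism $\mathbb N\to\mathbb N_{sch}$,'' and not merely an abstract bijection of sets. I would settle this either by tracing the double-dual and Proposition \ref{tercerb} morphisms through each isomorphism, or, more cleanly, by specializing the displayed isomorphism at $\mathcal M^*=\mathbb N_{sch}$ (i.e.\ $\mathcal M=\mathcal P$), defining the canonical morphism to be the image of $\mathrm{id}_{\mathbb N_{sch}}$, and invoking a Yoneda-style argument to conclude that composition with it realizes the isomorphism for every module scheme $\mathcal M^*$.
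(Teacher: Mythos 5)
Your proposal is correct and follows essentially the same route as the paper: both reduce the statement to the formal identities of Propositions \ref{trivial}, \ref{tercer}/\ref{tercerb}, Theorem \ref{reflex} and Theorem \ref{prop4} (the paper proves (1) by the single chain $\Hom_{\mathcal R}(\mathbb N,\mathcal M^*)=\Hom_{\mathcal R}(\mathcal M,\mathbb N^*)=\Hom_{\mathcal R}(\mathcal M,(\mathbb N^*)_{qc})=\Hom_{\mathcal R}(\mathbb N_{sch},\mathcal M^*)$, whereas you evaluate both sides down to $\Hom_R(M,\mathbb N^*(R))$, which just amounts to unfolding \ref{tercerb} into its proof via \ref{tercer}, and it proves (2) by the same appeal to \ref{prop4} as your use of Note \ref{2.12N}). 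Your explicit identification $\mathbb N_{sch}=\mathcal P^*$ with $P=\mathbb N^*(R)$, your construction of the canonical morphism $\mathbb N\to\mathbb N^{**}\to\mathbb N_{sch}$, and your Yoneda-style check that the bijection in (1) is induced by that morphism are all sound and make explicit what the paper leaves implicit.
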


\begin{proof} $\Hom _{\mathcal R}(\mathbb N,\mathcal M^*) \! \overset{\text{\ref{trivial}}} =\!
\Hom_{\mathcal R}(\mathcal M,\mathbb N^*)\! \overset{\text{\ref{tercerb}}}=\!\Hom_{\mathcal R}(\mathcal M, (\mathbb N^*)_{qc}){\overset{\text{\ref{trivial}}}=
\Hom_{\mathcal R}(\mathbb N_{sch},\mathcal M^*)},$
and $
\Hom_{\mathcal R}(\mathbb N_{sch},\mathcal M)\overset{\text{\ref{prop4} }}=(\mathbb N^*)_{qc}(R)\otimes_RM=\mathbb N^*(R)\otimes_R M$.

\end{proof}

 Let $\{U_i\}_{i\in I}$ be an open covering of a scheme $X$. We shall say that the obvious morphism $Y=\coprod_{i\in I} U_i\to X$ is an open covering.

\begin{definition} Let $\mathbb F$ be a functor of sets.  
$\mathbb F$  is said to be a sheaf in the Zariski topos if 
for any commutative $R$-algebra ${S}$ and any  open covering $\Spec {S}_1\to \Spec {S}$, the sequence of morphisms
$$\xymatrix{ \mathbb F({S})\ar[r] &  \mathbb F({S}_1)  \ar@<1ex>[r]  \ar@<-1ex>[r] & 
\mathbb F({S}_1\otimes_{S} {S}_1)}$$
is exact.

\end{definition}

\begin{example} $\mathcal M$ is a sheaf in the Zariski topos. 

\end{example}

Let $\mathbb F_1$ and $\mathbb F_2$ be sheaves in the Zariski topos.  If $f\colon \mathbb F_1\to \mathbb F_2$ is a morphism of $\mathcal R$-modules, it is easy to check that $\Ker f$ is a sheaf in the Zariski topos.

\begin{theorem} \cite[1.28]{Pedro2} \label{L5.11} Let $\{\mathbb F_i\}$ be a direct system of sheaves of $\mathcal R$-modules. Then,
$$\Hom_{\mathcal R}(\mathcal N^*,\ilim{i} \mathbb F_i)=\ilim{i}\Hom_{\mathcal R}(\mathcal N^*, \mathbb F_i).$$

\end{theorem}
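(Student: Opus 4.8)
The plan is to exploit the fact that the underlying functor of sets of $\mathcal N^*$ is representable, so that the whole question reduces to Yoneda together with the commutation of filtered direct limits with finite data. By Proposition \ref{adj2} one has $\mathcal N^*(S)=\Hom_R(N,S)$, and since an $R$-linear map $N\to S$ is the same datum as an $R$-algebra map $A:=\mathrm{Sym}_R N\to S$, the functor of sets underlying $\mathcal N^*$ is $\Hom_{R\text{-alg}}(A,-)$. Let $u\in\mathcal N^*(A)$ be the tautological element (the canonical map $N\to A$). By Yoneda, any morphism of functors of sets $\phi\colon\mathcal N^*\to\mathbb F$ is determined by $\phi_A(u)\in\mathbb F(A)$, via $\phi_S(a^*u)=\mathbb F(a)(\phi_A(u))$ for the algebra map $a\colon A\to S$ corresponding to a point of $\mathcal N^*(S)$. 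The only extra input needed is that direct limits of $\mathcal R$-modules are computed pointwise, so evaluation at $A$ (or at any fixed algebra) commutes with $\ilim{i}$.

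First I would settle injectivity of the natural map $\ilim{i}\Hom_{\mathcal R}(\mathcal N^*,\mathbb F_i)\to\Hom_{\mathcal R}(\mathcal N^*,\ilim{i}\mathbb F_i)$. A class is represented by some $\phi_i\colon\mathcal N^*\to\mathbb F_i$, and it maps to $0$ exactly when $\phi_{i,A}(u)$ becomes $0$ in $(\ilim{i}\mathbb F_i)(A)=\ilim{i}\mathbb F_i(A)$, i.e. already in $\mathbb F_j(A)$ for some $j\ge i$. By the Yoneda description above, $\phi_{j,A}(u)=0$ forces $\phi_j=0$, so the class is zero.

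The substance is surjectivity. Given $\psi\colon\mathcal N^*\to\ilim{i}\mathbb F_i$, choose $\xi_i\in\mathbb F_i(A)$ representing $\psi_A(u)\in\ilim{i}\mathbb F_i(A)$. Yoneda produces a natural transformation of functors of sets $\phi_i\colon\mathcal N^*\to\mathbb F_i$ with $\phi_{i,A}(u)=\xi_i$, whose composite with $\mathbb F_i\to\ilim{i}\mathbb F_i$ is $\psi$ by construction. The obstacle I expect to be the crux is that $\phi_i$ is a priori only a map of functors of sets, not of $\mathcal R$-modules. To repair this I would observe that the module axioms for $\phi_i$ reduce to finitely many universal equations: $\mathcal N^*\times\mathcal N^*$ is represented by $A\otimes_R A$ and $\mathcal R\times\mathcal N^*$ by $R[t]\otimes_R A$, so additivity of $\phi_i$ is the single equation $\phi_i(x+y)=\phi_i(x)+\phi_i(y)$ on the two tautological points $x,y\in\mathcal N^*(A\otimes_R A)$, an equality of two elements of $\mathbb F_i(A\otimes_R A)$, and $\mathcal R$-linearity is the single equation $\phi_i(t\cdot v)=t\cdot\phi_i(v)$ on the tautological pair in $\mathcal N^*(R[t]\otimes_R A)$, an equality in $\mathbb F_i(R[t]\otimes_R A)$.

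Both equalities hold for $\psi$ in the colimit, so the relevant pairs of elements agree in $\ilim{i}\mathbb F_i(A\otimes_R A)$ and in $\ilim{i}\mathbb F_i(R[t]\otimes_R A)$; by filteredness they already agree after pushing $\xi_i$ forward to a common stage $j$. The resulting $\phi_j$ is then additive and $\mathcal R$-linear on the universal points, hence by naturality a genuine morphism of $\mathcal R$-modules, and it still maps to $\psi$, finishing surjectivity. The delicate bookkeeping is the verification that these two universal equations really encode the full module structure, together with the standard facts that addition and scalar multiplication commute with filtered colimits of modules. In this route the hypothesis that the $\mathbb F_i$ be sheaves in the Zariski topos does not seem to be needed; it is the natural thing to invoke only in the alternative approach that writes $\mathcal N^*$ as the kernel of a morphism $\prod\mathcal R\to\prod\mathcal R$ and attempts to reduce to products of $\mathcal R$ and to kernels.
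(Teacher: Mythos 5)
Your proof is correct. One thing to note at the outset: the paper does not prove this theorem at all --- it is imported verbatim from \cite[1.28]{Pedro2} --- so there is no internal argument to compare yours against; your proposal has to stand on its own, and it does. The pivot of your argument, that the functor of sets underlying $\mathcal N^*$ is $\Hom_{R\text{-alg}}(\operatorname{Sym}_R N,-)$ (this is exactly why $\mathcal N^*$ is called a module \emph{scheme}), is right: $\mathcal N^*(S)=\Hom_S(N\otimes_R S,S)=\Hom_R(N,S)$ naturally in $S$. From there, Yoneda identifies morphisms out of $\mathcal N^*$ (as functors of sets) with evaluation at $A=\operatorname{Sym}_R N$, and evaluation commutes with direct limits because the paper computes them objectwise. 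Your two universal equations, in $\mathbb F_i(A\otimes_R A)$ and $\mathbb F_i(R[t]\otimes_R A)$, do characterize which natural transformations of set-valued functors are morphisms of $\mathcal R$-modules: any pair $(x,y)\in\mathcal N^*(S)^2$, resp.\ $(s,x)\in\mathcal R(S)\times\mathcal N^*(S)$, is pulled back from the universal pair along a unique algebra map $A\otimes_R A\to S$, resp.\ $R[t]\otimes_R A\to S$, and the addition and scalar multiplication on both $\mathcal N^*$ and $\mathbb F_i$ are themselves natural transformations, so the universal identities propagate to every $S$. Two small points you use implicitly and should make explicit: in the injectivity step, the transformation attached to $0\in\mathbb F_j(A)$ is the zero morphism because each structure map $\mathbb F_j(A)\to\mathbb F_j(S)$ is additive; and in the surjectivity step, the transition morphisms $\mathbb F_i\to\mathbb F_j$ are morphisms of $\mathcal R$-modules, which is what guarantees that they carry the two ``defect'' elements measuring failure of additivity and linearity of $\phi_i$ to those of $\phi_j$, so that directedness supplies a single stage $j$ killing both. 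Finally, your closing remark is correct and worth stressing: nothing in this argument uses that the $\mathbb F_i$ are sheaves in the Zariski topos, so you have actually proved a stronger statement than the one quoted; the sheaf hypothesis belongs to the external proof the paper cites, not to the result as you establish it.
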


\subsection{From the category of $R$-algebras to the category of $R$-modules}

\begin{notation} \label{notation} Let $\mathbb F=\mathbb N^*$ be a dual functor of $\mathcal R$-modules. We can consider the following functor from the category of $R$-modules to the category of $R$-modules 
$$\bar {\mathbb F}(N):=\Hom_{\mathcal R}(\mathbb N,\mathcal N)\overset{\text{\ref{trivial}}}=\Hom_{\mathcal R}(\mathcal N^*,\mathbb F),$$
for any $R$-module $N$. 

\end{notation}

\begin{examples} $\overline{\mathcal M^*}(N)=\Hom_R(M,N)$. $\bar{\mathcal M}(N)\overset{\text{\ref{prop4}}}=M\otimes_R N$.

\end{examples}

Observe that $\bar{\mathbb F}(S)=\mathbb F(S)$, for any commutative $R$-algebra $S$. Given an $R$-module $N$, consider the $R$-algebra $R\oplus N$, where $(r,n)\cdot (r',n'):=(rr',rn'+r'n)$. It is easy to check that

$$\mathbb F(R\oplus N)=\bar{\mathbb F}(R)\oplus \bar{\mathbb F}(N),$$
and $\bar{\mathbb F}(N)=\Ker({\mathbb F}(R\oplus N)\to {\mathbb F}(R))$.

Let ${\mathbb F}$, ${\mathbb F'}$ and  ${\mathbb F''}$ be  dual functors of $\mathcal R$-modules. Then, ${\mathbb F}\to {\mathbb F'}\to {\mathbb F''}$ is an exact sequence of morphisms of $\mathcal R$-modules iff $\bar{\mathbb F}(N)\to \bar{\mathbb F'}(N)\to \bar{\mathbb F''}(N)$ is an exact sequence of morphisms of $R$-modules, for any $R$-module $N$.

\begin{lemma} \label{main} The obvious morphism
$$\Hom_{\mathcal R}(\prod_{i\in I} \mathcal  R,\mathcal N)\to \Hom_{\mathcal R}(\oplus_{i\in I} \mathcal  R,\mathcal N), \, g\mapsto g_{|\oplus_{i\in I} \mathcal R}$$
is injective.
\end{lemma}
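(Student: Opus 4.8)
The plan is to exploit the fact that $\prod_{i\in I}\mathcal R$ is a representable functor and then combine the Yoneda lemma with a ``set the remaining variables to zero'' trick. For each $i$ the functor $\mathcal R$ is represented by $R[x_i]$, and since
$$(\prod_{i\in I}\mathcal R)(S)=\prod_{i\in I}S=\Hom_{R\text{-alg}}(R[x_i:i\in I],S),$$
the whole product is represented by the polynomial ring $A:=R[x_i:i\in I]$, whose universal element is the tuple $\xi:=(x_i)_{i\in I}\in(\prod_{i\in I}\mathcal R)(A)=A^I$ (it corresponds to $\mathrm{id}_A$ under the representation).

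By Yoneda, a morphism $g\colon\prod_{i\in I}\mathcal R\to\mathcal N$ of functors (in particular an $\mathcal R$-linear one) is completely determined by the single element $\eta:=g_A(\xi)\in\mathcal N(A)=N\otimes_R A$; hence it suffices to prove that $g_{|\oplus_{i\in I}\mathcal R}=0$ forces $\eta=0$. First I would write $\eta=\sum_{k=1}^m n_k\otimes p_k$ with $n_k\in N$ and $p_k\in A$, and let $J\subseteq I$ be the finite set of all variables occurring in the polynomials $p_1,\dots,p_m$.

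Next I would introduce the $R$-algebra endomorphism $\phi_J\colon A\to A$ that fixes $x_i$ for $i\in J$ and sends $x_i\mapsto 0$ for $i\notin J$. Applying the functor, $(\prod_{i\in I}\mathcal R)(\phi_J)$ carries $\xi$ to the finitely supported tuple $\xi_J$ that equals $x_i$ on $J$ and $0$ elsewhere, so $\xi_J\in(\oplus_{i\in I}\mathcal R)(A)$. The hypothesis then gives $g_A(\xi_J)=0$, while naturality of $g$ yields $g_A(\xi_J)=\mathcal N(\phi_J)(\eta)=(\mathrm{id}_N\otimes\phi_J)(\eta)$. Since $\phi_J$ fixes every variable actually appearing in $\eta$, one has $(\mathrm{id}_N\otimes\phi_J)(\eta)=\eta$, whence $\eta=0$ and therefore $g=0$, which is exactly the asserted injectivity.

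The only delicate point is getting the representing object and its universal element right; once that is in place the argument uses no linearity whatsoever (it would prove the same statement for arbitrary morphisms of functors of sets). The essential mechanism is that every element of the polynomial ring involves only finitely many variables, so that, after the substitution $\phi_J$, the universal element lands inside the image of the direct sum. I would only need to double-check the two standard facts that $\bigotimes_{i\in I}R[x_i]=R[x_i:i\in I]$ and $\Hom_{R\text{-alg}}(R[x_i:i\in I],S)=S^I$; neither presents any real obstacle.
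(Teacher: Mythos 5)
Your proof is correct, and it takes a genuinely different route from the paper's. The paper argues in one line from Theorem \ref{prop4}: writing $M=\oplus_{i\in I}R$, one has $\prod_{i\in I}\mathcal R=\mathcal M^*$, hence $\Hom_{\mathcal R}(\prod_{i\in I}\mathcal R,\mathcal N)=M\otimes_R N=\oplus_{i\in I}N$, while $\Hom_{\mathcal R}(\oplus_{i\in I}\mathcal R,\mathcal N)=\prod_{i\in I}N$, and under these identifications the restriction map becomes the canonical inclusion $\oplus_{i\in I}N\subseteq\prod_{i\in I}N$; this gives injectivity together with an exact description of the image, but it rests on the nontrivial duality ${\mathbb Hom}_{\mathcal R}(\mathcal M^*,\mathcal M')=\mathcal M\otimes_{\mathcal R}\mathcal M'$ cited from \cite{Amel}. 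You instead use only that $\prod_{i\in I}\mathcal R$ is representable as a set-valued functor by $A=R[x_i\colon i\in I]$: Yoneda reduces $g$ to the single element $\eta=g_A(\xi)\in N\otimes_R A$, the element $\eta$ involves finitely many variables, and the substitution $\phi_J$ killing all the other variables fixes $\eta$ while pushing the universal element $\xi$ into $(\oplus_{i\in I}\mathcal R)(A)$, so naturality and the hypothesis $g_{|\oplus_{i\in I}\mathcal R}=0$ force $\eta=0$. Every step checks out --- the identification $(\prod_{i\in I}\mathcal R)(S)=\Hom_{R\text{-alg}}(A,S)=S^I$, the formula $\xi_J=(\prod_{i\in I}\mathcal R)(\phi_J)(\xi)$, the naturality square --- and you do not even need the tensor decomposition $\otimes_{i\in I}R[x_i]=A$ that you flag for checking, since representability of the product is immediate. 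Your approach buys elementarity and generality: it is self-contained, uses no linearity, and (as you note) proves the analogous statement for arbitrary natural transformations of set-valued functors, with no appeal to the machinery of \cite{Amel}. The paper's approach buys more information: not just injectivity, but the identification of the image of restriction as exactly $\oplus_{i\in I}N$ inside $\prod_{i\in I}N$, which is the sharper form that Proposition \ref{refpro} subsequently builds on.
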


\begin{proof} Write $M=\oplus_{i\in I} R$. Then,
$$\Hom_{\mathcal R}(\prod_{i\in I} \mathcal  R,\mathcal N)\!=\!\Hom_{\mathcal R}(\mathcal  M^*,\mathcal N)\overset{\text{\ref{prop4}}}=\!M\otimes_R N=\underset{i\in I}\oplus N\subseteq \prod_{i\in I} N\!=\!\Hom_{\mathcal R}(\underset{i\in I}\oplus \mathcal  R,\mathcal N).$$

\end{proof}

\begin{proposition} \label{refpro} Let $\{\mathbb M_i\}_{i\in I}$ be a set of dual functors of $\mathcal R$-modules and let $N$ be an $R$-module. Then,
$$\Hom_{\mathcal R}(\prod_{i\in I} \mathbb  M_i,\mathcal N)=
\oplus_{i\in I} \Hom_{\mathcal R}(\mathbb  M_i,\mathcal N)$$
In particular, $(\prod_{i\in I}\mathbb  M_i)^*=\oplus_{i\in I}\mathbb  M_i^*$ and if
$\mathbb M_i$ is reflexive, for any $i$, then $\prod_{i\in I}\mathbb  M_i$ is reflexive.

\end{proposition}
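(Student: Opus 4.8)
The plan is to prove that the natural map
$$\Phi\colon \oplus_{i\in I}\Hom_{\mathcal R}(\mathbb M_i,\mathcal N)\to \Hom_{\mathcal R}(\prod_{i\in I}\mathbb M_i,\mathcal N),\quad (f_i)_i\mapsto \sum_i f_i\circ\pi_i,$$
is bijective, where $\pi_i\colon\prod_i\mathbb M_i\to\mathbb M_i$ are the projections (the sum is finite because $(f_i)_i$ has finite support). Writing $\iota_i\colon\mathbb M_i\to\prod_i\mathbb M_i$ for the inclusion of the $i$-th factor by zero, so that $\pi_j\circ\iota_i=\delta_{ij}\,\mathrm{id}$, the injectivity of $\Phi$ is immediate, since precomposing $\sum_i f_i\pi_i$ with $\iota_j$ recovers $f_j$. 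For surjectivity it suffices to establish two statements: \emph{(I)} the restriction morphism $\rho\colon\Hom_{\mathcal R}(\prod_i\mathbb M_i,\mathcal N)\to\Hom_{\mathcal R}(\oplus_i\mathbb M_i,\mathcal N)$, $g\mapsto g_{|\oplus_i\mathbb M_i}$, is injective; and \emph{(II)} for every $g$ the family $(g\circ\iota_i)_i$ has finite support. Indeed, given $g$, \emph{(II)} furnishes a finite $J$ with $g\circ\iota_i=0$ for $i\notin J$, and then $g-\sum_{i\in J}(g\circ\iota_i)\circ\pi_i$ restricts to $0$ on $\oplus_i\mathbb M_i$, hence vanishes by \emph{(I)}; thus $g=\Phi((g\circ\iota_i)_i)$.

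For \emph{(I)} I would reduce to Lemma \ref{main} by a test-element argument. Fix a commutative $R$-algebra $S$ and $m=(m_i)_i\in(\prod_i\mathbb M_i)(S)=\prod_i\mathbb M_i(S)$. By Proposition \ref{tercer}, each $m_i\in\mathbb M_i(S)=\Hom_{\mathcal S}(\mathcal S,\mathbb M_{i|S})$ corresponds to a morphism $\tilde m_i\colon\mathcal S\to\mathbb M_{i|S}$, and these assemble into $\tilde m=\prod_i\tilde m_i\colon\prod_i\mathcal S\to\prod_i\mathbb M_{i|S}$ with $\tilde m\circ\iota_i=\iota_i\circ\tilde m_i$. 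Then $h:=g_{|S}\circ\tilde m\colon\prod_i\mathcal S\to\mathcal N$ satisfies $h\circ\iota_i=(g\circ\iota_i)_{|S}\circ\tilde m_i$, so if $g$ restricts to $0$ on $\oplus_i\mathbb M_i$ then $h$ restricts to $0$ on $\oplus_i\mathcal S$; by Lemma \ref{main} (applied over $S$) this forces $h=0$. Evaluating on the element $\mathbf 1=(1)_i\in\prod_i\mathcal S(S)=\prod_i S$, for which $\tilde m(\mathbf 1)=m$, gives $g_{|S}(m)=h(\mathbf 1)=0$. As $S$ and $m$ are arbitrary, $g=0$.

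Statement \emph{(II)} is the heart of the matter and is where I expect the main difficulty. The same construction shows first a \emph{pointwise} finiteness: for each fixed $m=(m_i)_i$ the morphism $h=g_{|S}\circ\tilde m$ lies in $\Hom_{\mathcal S}(\prod_i\mathcal S,\mathcal N)$, which by the computation in Lemma \ref{main} (over $S$) equals $\oplus_i(N\otimes_R S)$, so the family $((g\circ\iota_i)_{|S}(m_i))_i$ has finite support. The obstacle is to upgrade this to finiteness of $\{i:g\circ\iota_i\neq 0\}$, which I would do by producing a single test element witnessing all nonzero $f_i:=g\circ\iota_i$ at once. If $f_i\neq 0$ then $\bar f_i\neq 0$, so (by the discussion of $\bar{\phantom{x}}$ for dual functors) there is an $R$-module $L_i$ and an element $m_i\in\bar{\mathbb M_i}(L_i)=\Ker(\mathbb M_i(R\oplus L_i)\to\mathbb M_i(R))\subseteq\mathbb M_i(R\oplus L_i)$ with $f_i(m_i)\neq 0$ in $\bar{\mathcal N}(L_i)=N\otimes_R L_i$. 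The augmented algebras $S_i=R\oplus L_i$ carry algebra retractions $S_i\to R$, so $R$ is an $R$-module direct summand of every tensor product of them; choosing a countable set of indices with $f_i\neq0$, setting $S=\bigotimes_i S_i$, and base-changing, the splitting of $S_i\to S$ keeps $f_{i,S}(m_i)\neq 0$. The element $m=(m_i)_i\in\prod_i\mathbb M_i(S)$ then has infinitely many coordinates with $f_{i,S}(m_i)\neq 0$, contradicting the pointwise finiteness above; hence $\{i:f_i\neq0\}$ is finite.

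The two ``in particular'' assertions then follow formally. Taking $\mathcal N=\mathcal R$ in the established equality, and using that restriction to $S$-algebras commutes with products and carries each $\mathbb M_i$ to the dual functor $\mathbb M_{i|S}$, gives $(\prod_i\mathbb M_i)^*=\oplus_i\mathbb M_i^*$ as functors. Combining this with the elementary identity $(\oplus_i\mathbb L_i)^*=\prod_i\mathbb L_i^*$ (the universal property of the direct sum) applied to $\mathbb L_i=\mathbb M_i^*$ yields $(\prod_i\mathbb M_i)^{**}=(\oplus_i\mathbb M_i^*)^*=\prod_i\mathbb M_i^{**}$, which equals $\prod_i\mathbb M_i$ whenever every $\mathbb M_i$ is reflexive.
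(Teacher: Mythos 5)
Your proof is correct and follows the same two-step skeleton as the paper's---injectivity of the restriction to $\oplus_i\mathbb M_i$ via Lemma \ref{main}, then finiteness of the support of $(g\circ\iota_i)_i$---but it executes the second, crucial step by a genuinely different mechanism. Your step \emph{(I)} is the paper's argument in different notation: the paper's $g((r_i)_i):=f((r_i\cdot m_i)_i)$ is exactly your $f\circ\tilde m$. For step \emph{(II)}, the paper picks witnesses $m_j\in\mathbb M_j(R_j)$ over \emph{arbitrary} algebras $R_j$ for every $j$ with $f_j\neq 0$, passes to the product algebra $S=\prod_j R_j$, and lifts each witness to $\mathbb M_j(S)$ using that the projection $S\to R_j$ admits an $R$-module section together with the fact that $\mathbb M_j(S)=\overline{\mathbb M_j}(S)$ is functorial in $R$-module maps (this is where the paper uses that the $\mathbb M_j$ are dual functors); Lemma \ref{main} over $S$ then bounds the support directly, with no contradiction argument. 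You instead use the dual-functor hypothesis to pass from $f_i\neq 0$ to $\bar f_i\neq 0$, which yields witnesses over the canonically augmented square-zero algebras $R\oplus L_i$; the (possibly infinite) tensor product of these admits \emph{algebra} retractions onto each factor, so the witnesses survive base change with no module-level functoriality needed, and you conclude by contradicting pointwise finiteness. Both routes rest on facts about $\bar{\phantom{x}}$ that the paper states without proof in Section 2 (the paper uses $\bar{\mathbb F}(S)=\mathbb F(S)$ and functoriality of $\overline{\mathbb M_j}$ over module maps; you use $\bar{\mathbb F}(N)=\Ker(\mathbb F(R\oplus N)\to\mathbb F(R))$ and its naturality in $\mathbb F$), so the levels of rigor are comparable. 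What each buys: the paper's product-algebra device is shorter and stays within finite constructions; yours isolates more sharply where ``dual'' is indispensable (a nonzero morphism of dual functors is already detected on square-zero extensions), at the cost of an infinite tensor product and an argument by contradiction. Your formal deduction of the two ``in particular'' claims, which the paper leaves to the reader, is also correct.
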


\begin{proof} If
$f_{|\oplus_{i\in I}\mathbb  M_i}=0$, then $f=0$: Given
$m=(m_i)_{i\in I}\in \prod_{i\in I} \mathbb M_i$, define
$$g\colon \prod_{i \in I} \mathcal R\to \mathcal N,\, g((r_i)_i):=f((r_i\cdot m_i)_i).$$ Observe that $g_{|\oplus_i \mathcal R}=0$, hence $g=0$, by Lemma \ref{main}. Therefore, $f=0$.

Obviously,
$$
\oplus_{i\in I} \Hom_{\mathcal R}(\mathbb  M_i,\mathcal N)\subseteq
\Hom_{\mathcal R}(\prod_{i\in I} \mathbb  M_i,\mathcal N).$$

Let $f\in \Hom_{\mathcal R}(\prod_{i\in I} \mathbb  M_i,\mathcal N)$ and $J:=\{i\in I\colon f_i:=f_{|\mathbb  M_i}\neq 0\}$. For each $j\in J$, let $R_j$ be a commutative $R$-algebra and $m_j\in\mathbb M_j(R_j)$  such that $0\neq f_j(m_j)\in N\otimes_RR_j$. Let $S:=\prod_{j\in J}R_j$. The obvious morphism of $R$-algebras $S\to R_i$ is surjective, and this morphism of $R$-modules has a section. Hence, the natural morphism $\pi_i\colon \mathbb M_i(S)=\overline{\mathbb M_i}(S)\to
\overline{\mathbb M_i}(R_i)=\mathbb M_i(R_i)$ has a section of $R$-modules. Let $m'_i\in \mathbb M_i(S)$ be such that $\pi_i(m'_i)=m_i$.
The morphism of $\mathcal S$-modules $g\colon \prod_{J}\mathcal S\to \mathcal N\otimes_{\mathcal R} \mathcal S$, $g((s_j)):=f((s_j\cdot m'_j)_j)$ satisfies that
$g_{|\mathcal S}\neq 0 $, for every factor $\mathcal S\subset \prod_{J} \mathcal S$.
Then,  $\# J<\infty$ by Lemma \ref{main}.

Finally, define $h:=\sum_{j\in J} f_j\in \oplus_{i\in I} \Hom_{\mathcal R}(\mathbb  M_i,\mathcal N)$, then $f=h$.

\end{proof}

Let $\{\mathbb F_i\}_{i\in I}$ be a set of reflexive functors, then $\oplus_i\mathbb F_i$ is a reflexive functor and $\overline{\oplus_i\mathbb F_i}=\oplus_i \bar{\mathbb F_i}$.

\section{Quasi-coherent modules associated with finitely presented modules}

Let $M$ be an $R$-module. There exists an $R$-module $N$ such that $\mathcal M= \mathcal N^*$ iff 
$M$ is an $R$-module projective of finite type (see \cite{Amel2}). In other words, $\mathcal M =\mathcal M_{sch}$ iff $M$ is an $R$-module projective of finite type. $\mathcal M =\mathcal M_{sch}$ iff
$$M\otimes_R N'=\overline{\mathcal M}(N')=\overline{\mathcal M_{sch}}(N')= \Hom_R(M^*,N')$$
for any $R$-module $N'$.

\begin{theorem} The morphism ${\mathcal M^*}_{qc}\to \mathcal M^*$ is an epimorphism iff $M$ is a projective module of finite type.
\end{theorem}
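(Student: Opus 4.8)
The plan is to reduce the statement to the classical dual basis lemma. First I would describe the morphism $\phi\colon {\mathcal M^*}_{qc}\to\mathcal M^*$ of Proposition \ref{tercerb} explicitly. Since ${\mathcal M^*}_{qc}$ is the quasi-coherent module associated with $\mathcal M^*(R)=M^*=\Hom_R(M,R)$, for every commutative $R$-algebra $S$ one has ${\mathcal M^*}_{qc}(S)=M^*\otimes_R S$ and $\mathcal M^*(S)=\Hom_R(M,S)$, and a direct computation shows that $\phi_S$ is the canonical evaluation map
$$\theta_{M,S}\colon M^*\otimes_R S\to\Hom_R(M,S),\quad w\otimes s\mapsto (m\mapsto w(m)\cdot s).$$
More generally, both ${\mathcal M^*}_{qc}$ (quasi-coherent, hence reflexive, hence a dual functor) and $\mathcal M^*$ are dual functors, and by the examples after Notation \ref{notation} one has $\overline{{\mathcal M^*}_{qc}}(N)=M^*\otimes_R N$ and $\overline{\mathcal M^*}(N)=\Hom_R(M,N)$, with $\bar\phi_N=\theta_{M,N}$.

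For the implication that the epimorphism condition forces $M$ to be projective of finite type, I would test $\phi$ on the square-zero algebra $R\oplus M$. Using the decomposition $\mathbb F(R\oplus N)=\bar{\mathbb F}(R)\oplus\bar{\mathbb F}(N)$ for dual functors $\mathbb F$ (stated before Lemma \ref{main}) and its naturality in $\mathbb F$, the map $\phi_{R\oplus M}$ splits as $\mathrm{id}_{M^*}\oplus\theta_{M,M}$. If $\phi$ is an epimorphism then $\phi_{R\oplus M}$ is surjective, hence so is $\theta_{M,M}\colon M^*\otimes_R M\to\Hom_R(M,M)$. In particular $\mathrm{id}_M$ lies in its image, so there are finitely many $m_1,\dots,m_n\in M$ and $w_1,\dots,w_n\in M^*$ with $m=\sum_{i} w_i(m)\,m_i$ for every $m\in M$. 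By the dual basis lemma, $M$ is projective of finite type.

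For the converse, suppose $M$ is projective of finite type, so that $M\oplus M'\cong R^n$ for some $M'$ and $n$. For $R^n$ the map $\theta_{R^n,S}$ is the identity of $S^n=\Hom_R(R^n,S)$, hence an isomorphism; since $\theta_{-,S}$ is additive, $\theta_{R^n,S}=\theta_{M,S}\oplus\theta_{M',S}$, so $\theta_{M,S}$ is a direct summand of an isomorphism and is therefore itself an isomorphism, in particular surjective. As $\phi_S=\theta_{M,S}$ for every $S$, the morphism $\phi$ is an epimorphism.

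The step that requires care is the explicit identification of $\phi_S$ (and of $\bar\phi_N$) with the evaluation map $\theta_{M,S}$, together with the naturality of the splitting over $R\oplus M$; once these bookkeeping points are in place, the dual basis lemma disposes of both implications directly.
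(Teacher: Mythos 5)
Your proof is correct and takes essentially the same route as the paper: in both, the epimorphism hypothesis is used to force $M^*\otimes_R M\to\Hom_R(M,M)$ to be surjective (the paper invokes its exactness criterion for dual functors via the bar construction, which rests on exactly the square-zero algebra $R\oplus N$ trick you carry out explicitly), and then a preimage of $\mathrm{id}_M$ provides a finite dual basis, giving that $M$ is a direct summand of a finite free module. The only real difference is that you also spell out the converse implication, which the paper leaves implicit as standard.
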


\begin{proof} $\Rightarrow)$ The morphism $\overline{{\mathcal M^*}_{qc}}\to \overline{\mathcal M^*}$ is an epimorphism. Then,  the morphism
$$M^*\otimes_R N\to \Hom_R(M,N)$$
is surjective, for every $R$-module $N$.
Let $N=M$. Then, there exist $w_i\in M^*$ and $m_i\in M$, $i=1,\ldots, r$, such that $\sum_i w_i\otimes m_i\mapsto Id$. Therefore, $\sum_i w_i(m)m_i=m$, for every $m\in M$. Let $f\colon M\to R^r$, $f(m):=(w_i(m))$ and $g\colon R^r\to M$, $g(a_i):=\sum_i a_im_i$. Observe that $(g\circ f)(m)=g((w_i(m)))=\sum_i w_i(m)m_i=m$, that is, 
$g\circ f=Id$ and $M$ is a direct summand of $R^r$.

\end{proof}

\begin{corollary} A morphism $\mathcal N\to \mathcal M^*$ is an epimorphism iff $M$ is a projective module of finite type and the morphism $N\to M^*$ is an epimorphism.
\end{corollary}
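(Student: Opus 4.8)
The plan is to reduce the statement to the preceding theorem by exploiting the canonical factorization of the given morphism through $\mathcal M^*_{qc}$. First, by Proposition \ref{tercer} a morphism $\mathcal N\to\mathcal M^*$ is the same datum as an $R$-module morphism $N\to\mathcal M^*(R)=M^*$, and this is exactly the morphism $N\to M^*$ in the statement. Moreover, since $\mathcal N$ is quasi-coherent, Proposition \ref{tercerb} gives $\Hom_{\mathcal R}(\mathcal N,\mathcal M^*_{qc})=\Hom_{\mathcal R}(\mathcal N,\mathcal M^*)$, so our morphism factors canonically as $\mathcal N\xrightarrow{\alpha}\mathcal M^*_{qc}\xrightarrow{\beta}\mathcal M^*$, where $\beta$ is the natural morphism of Proposition \ref{tercerb} and, $\mathcal M^*_{qc}$ being the quasi-coherent module associated with $M^*$, the map $\alpha$ is the quasi-coherent morphism attached to $N\to M^*$ under the equivalence of categories for quasi-coherent modules. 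The whole argument consists in reading off the epimorphism property from this factorization.

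For the implication $(\Leftarrow)$, assume $M$ is projective of finite type and $N\to M^*$ is an epimorphism. Since $\alpha$ is the quasi-coherent morphism attached to the surjection $N\to M^*$, its cokernel is the quasi-coherent module associated with $\Coker(N\to M^*)=0$, so $\alpha$ is an epimorphism (equivalently, $\bar\alpha(L)=(N\to M^*)\otimes_RL$ is surjective for every $R$-module $L$, because tensoring is right exact). On the other hand $\beta$ is an epimorphism by the preceding theorem, as $M$ is projective of finite type. A composition of epimorphisms is an epimorphism, hence $\mathcal N\to\mathcal M^*=\beta\circ\alpha$ is an epimorphism.

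For $(\Rightarrow)$, suppose $\mathcal N\to\mathcal M^*$ is an epimorphism. Since it factors as $\beta\circ\alpha$ and $\Ima(\beta\circ\alpha)\subseteq\Ima\beta\subseteq\mathcal M^*$, the natural morphism $\beta\colon\mathcal M^*_{qc}\to\mathcal M^*$ is itself an epimorphism, and by the preceding theorem this forces $M$ to be projective of finite type. It remains to recover surjectivity of $N\to M^*$. For this one uses the sharper fact that, once $M$ is projective of finite type, $\beta$ is an \emph{isomorphism}: indeed $\mathcal M^*(S)=\Hom_R(M,S)$ while $\mathcal M^*_{qc}(S)=M^*\otimes_RS$, and $M^*\otimes_RS\to\Hom_R(M,S)$ is an isomorphism for every $S$ because dualization of a finitely generated projective commutes with base change. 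Thus $\mathcal M^*=\mathcal M^*_{qc}$ is quasi-coherent and $\alpha$ is an epimorphism of quasi-coherent modules; its cokernel is the quasi-coherent module associated with $\Coker(N\to M^*)$, which must therefore vanish, i.e. $N\to M^*$ is surjective.

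The main obstacle is precisely this last step: knowing only that $\beta$ is an epimorphism does not suffice to recover surjectivity of $N\to M^*$, so one genuinely needs that $\beta$ becomes an isomorphism once $M$ is projective of finite type. Everything else is a formal manipulation of the factorization $\beta\circ\alpha$ together with the already established equivalence ``$\beta$ is an epimorphism $\iff M$ is projective of finite type''.
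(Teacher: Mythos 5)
Your proof is correct and follows essentially the same route as the paper: both factor the morphism as $\mathcal N\to\mathcal M^*_{qc}\to\mathcal M^*$ via Proposition \ref{tercerb}, deduce from the preceding theorem that $M$ is projective of finite type, and use that $\mathcal M^*_{qc}\to\mathcal M^*$ is then an isomorphism to transfer the epimorphism condition to the quasi-coherent level, where it means surjectivity of $N\to M^*$. Your write-up is in fact more explicit than the paper's, which asserts the surjectivity of $N\to M^*$ in the forward direction without spelling out the isomorphism step you justify.
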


\begin{proof} $\Rightarrow)$  $\mathcal N\to \mathcal M^*$  factors through the morphism
$ {\mathcal M^*}_{qc}\to  \mathcal M^*$, which is an epimorphism because $\mathcal N\to {\mathcal M^*}$ is an epimorphism. Then, $M$ is a projective module of finite type and the morphism $N\to M^*$ is an epimorphism.

$\Leftarrow)$ The morphism $\mathcal N\to {\mathcal M^*}_{qc}$ is an epimorphism because 
$ N\to  M^*$ is an epimorphism. The morphism
$ {\mathcal M^*}_{qc}\to  \mathcal M^*$ is an isomorphism because $M$ is a projective module of finite type. Then, 
$\mathcal N\to \mathcal M^*$ is an epimorphism .
\end{proof}

\begin{lemma} \label{1.1N} Let $f\colon \mathcal V_2\to \mathcal V_1$ be a morphism of $\mathcal R$-modules between quasi-coherent modules. Then, $f$ is an epimorphism iff $f^*\colon \mathcal V_1^*\to \mathcal V_2^*$ is a monomophism.
\end{lemma}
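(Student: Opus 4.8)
The plan is to push the whole statement through the equivalence between $R$-modules and quasi-coherent $\mathcal R$-modules, where it becomes the elementary fact that a map of $R$-modules is surjective iff precomposition with it is injective on $\Hom$'s into every $R$-module. Since $\mathcal V_1$ and $\mathcal V_2$ are quasi-coherent, $f=\tilde{\bar f}$ for a unique morphism of $R$-modules $\bar f\colon V_2\to V_1$ (with $\mathcal V_i$ associated with $V_i$). Because the cokernel of a morphism of quasi-coherent modules is the quasi-coherent module associated with the cokernel of the underlying map, $f$ is an epimorphism exactly when $\Coker\bar f=0$, i.e. when $\bar f$ is surjective. This disposes of the ``epimorphism'' side.

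For the dual side I would use that $\mathcal V_1^*$ and $\mathcal V_2^*$ are dual functors of $\mathcal R$-modules, so the criterion that a sequence of dual functors is exact iff it stays exact after applying $\overline{(\,\cdot\,)}$ to every $R$-module $N$ applies. Applied to $0\to\mathcal V_1^*\overset{f^*}{\to}\mathcal V_2^*$, the assertion ``$f^*$ is a monomorphism'' becomes ``$\overline{\mathcal V_1^*}(N)\to\overline{\mathcal V_2^*}(N)$ is injective for every $R$-module $N$''. Invoking $\overline{\mathcal V^*}(N)=\Hom_R(V,N)$ and unwinding the definition of $\overline{(\,\cdot\,)}$ and of $f^*$ to see that the induced map is precomposition $\Hom_R(\bar f,N)\colon \Hom_R(V_1,N)\to\Hom_R(V_2,N)$, the claim reduces to: $f^*$ is a monomorphism iff $\Hom_R(\bar f,N)$ is injective for all $N$.

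The final step is the standard module-theoretic fact. Left exactness of $\Hom_R(-,N)$ applied to $V_2\overset{\bar f}{\to}V_1\to\Coker\bar f\to 0$ gives an exact sequence $0\to\Hom_R(\Coker\bar f,N)\to\Hom_R(V_1,N)\to\Hom_R(V_2,N)$; hence $\Hom_R(\bar f,N)$ is injective for all $N$ iff $\Hom_R(\Coker\bar f,N)=0$ for all $N$, and testing with $N=\Coker\bar f$ (whose identity must then vanish) this holds iff $\Coker\bar f=0$, i.e. iff $\bar f$ is surjective. Combining the two reductions yields: $f$ is an epimorphism iff $\bar f$ is surjective iff $f^*$ is a monomorphism.

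The only delicate points, rather than genuine obstacles, are bookkeeping ones: verifying that the map produced by $\overline{(\,\cdot\,)}$ is indeed $\Hom_R(\bar f,N)$, which is a direct unwinding of the definitions; and, in the converse direction, remembering to test injectivity specifically against $N=\Coker\bar f$ so as to conclude surjectivity of $\bar f$ rather than something weaker. Everything else is formal once the problem has been transported through the equivalence of categories and the dual-functor exactness criterion.
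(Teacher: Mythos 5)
Your proof is correct, but it takes a genuinely different route from the paper's. The paper's own proof (which only writes out the nontrivial implication, treating ``epimorphism $\Rightarrow$ monomorphism of duals'' as immediate) is three lines: $\Coker f$ is the quasi-coherent module associated with $\Coker f_R$; dualizing the right-exact sequence $\mathcal V_2\to\mathcal V_1\to\Coker f\to 0$ gives $(\Coker f)^*=\Ker f^*=0$; and then reflexivity of quasi-coherent modules (Theorem \ref{reflex}, $\mathcal M=\mathcal M^{**}$) forces $\Coker f=(\Coker f)^{**}=0$. So in the paper all the work is done by the identity $(\Coker f)^*=\Ker f^*$ together with the principle that a quasi-coherent module with vanishing dual is zero. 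You instead transport the statement through the equivalence of categories and then apply the $\overline{(\,\cdot\,)}$-exactness criterion for dual functors to the sequence $0\to\mathcal V_1^*\overset{f^*}\to\mathcal V_2^*$ (legitimate, since the zero functor is a dual functor), reducing everything to the classical fact that $\bar f\colon V_2\to V_1$ is surjective iff $\Hom_R(\bar f,N)$ is injective for every $R$-module $N$. What your route buys: the content becomes pure module theory, both directions of the lemma are handled uniformly by one chain of equivalences, and you never invoke Theorem \ref{reflex} --- the test module $N=\Coker\bar f$ plays exactly the role that reflexivity plays in the paper. What it costs: you lean instead on the exactness criterion for dual functors, which the paper states in the preliminaries without proof and which is of comparable depth to \ref{reflex}, and on the (routine but necessary) identification of $\overline{f^*}(N)$ with $\Hom_R(\bar f,N)$. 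Both arguments are sound; the paper's is the more economical given its toolkit, while yours makes the module-theoretic mechanism explicit.
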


\begin{proof} $\Leftarrow)$ $\Coker f$ is the quasi-coherent module associated with to $\Coker f_R$, and $(\Coker f)^*=\Ker f^*=0$. Then, 
$\Coker f=(\Coker f)^{**}=0$.

\end{proof}

\begin{proposition} \label{1.2} If
$$0\to \mathcal V_1^*\overset i\to\mathcal V_2^*\overset\pi\to\mathcal M\to 0$$
is an exact sequence of morphisms of functors of $\mathcal R$-modules, then $M$ is a projective module of finite type.

\end{proposition}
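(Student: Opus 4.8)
The plan is to reduce to the criterion established just above, namely that $M$ is projective of finite type iff the canonical morphism ${\mathcal M^*}_{qc}\to\mathcal M^*$ is an epimorphism. So I aim to compute $\mathcal M^*$ explicitly out of the given exact sequence and then verify that epimorphism condition.

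First I would dualize the sequence $0\to\mathcal V_1^*\overset i\to\mathcal V_2^*\to\mathcal M\to 0$. The monomorphism $i$ is a morphism between module schemes, so by Proposition \ref{trivial} together with the reflexivity of quasi-coherent modules (Theorem \ref{reflex}) it is identified with $\mathcal j^*$ for a unique morphism $\mathcal j\colon\mathcal V_2\to\mathcal V_1$ of quasi-coherent modules, say associated with $j\colon V_2\to V_1$. Because $i=\mathcal j^*$ is a monomorphism, Lemma \ref{1.1N} shows that $\mathcal j$ is an epimorphism, hence $j\colon V_2\to V_1$ is a surjection of $R$-modules. This surjectivity is the structural fact I will exploit.

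Next I would identify $\mathcal M^*$ with a kernel. Since $\mathcal M=\Coker i$ and cokernels, kernels and restrictions of $\mathcal R$-modules are computed pointwise, for each $S$ I restrict the sequence to the category of $\mathcal S$-modules and apply $\Hom_{\mathcal S}(-,\mathcal S)$; computing the relevant $\Hom_{\mathcal S}$ out of the module schemes of $V_i\otimes_R S$ by Note \ref{2.12N} over $S$, I obtain $\mathcal M^*(S)=\Ker(V_2\otimes_R S\overset{j\otimes 1}\to V_1\otimes_R S)$, functorially in $S$. In other words $\mathcal M^*=\Ker(\mathcal j\colon\mathcal V_2\to\mathcal V_1)$; in particular $\mathcal M^*(R)=K$, where $K:=\Ker(j)$. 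Hence ${\mathcal M^*}_{qc}=\mathcal K$, and the canonical morphism ${\mathcal M^*}_{qc}\to\mathcal M^*$ is, at each $S$, the map $K\otimes_R S\to\Ker(V_2\otimes_R S\to V_1\otimes_R S)$ induced by the inclusion $K\hookrightarrow V_2$.

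Finally I would check this morphism is an epimorphism by invoking right exactness of $-\otimes_R S$. Since $j$ is surjective with kernel $K$, the sequence $K\to V_2\overset j\to V_1\to 0$ is exact, so $K\otimes_R S\to V_2\otimes_R S\to V_1\otimes_R S\to 0$ is exact; exactness at the middle term says precisely that the image of $K\otimes_R S$ in $V_2\otimes_R S$ equals $\Ker(V_2\otimes_R S\to V_1\otimes_R S)=\mathcal M^*(S)$. Thus ${\mathcal M^*}_{qc}\to\mathcal M^*$ is surjective at every $S$, i.e.\ an epimorphism, and the criterion above yields that $M$ is projective of finite type. The main obstacle is the middle step: correctly matching the dualized sequence with the pointwise kernel functor $\Ker(\mathcal j)$ rather than with the quasi-coherent module $\mathcal K$ (the two differ unless $V_1$ is flat). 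Once $j$ is known to be surjective, the surjectivity of ${\mathcal M^*}_{qc}\to\mathcal M^*$ is exactly the content of right exactness of the tensor product, which is what makes the argument go through with no flatness hypothesis.
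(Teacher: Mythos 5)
Your proof is correct, and it reaches the conclusion by a genuinely different route from the paper's. The two arguments share their opening move: dualizing the monomorphism $i$ via Proposition \ref{trivial} and Theorem \ref{reflex} into a morphism $j\colon V_2\to V_1$, which is surjective by Lemma \ref{1.1N} (used in exactly the direction the paper proves). After that they diverge. The paper computes $\Hom_{\mathcal R}(\mathcal M,\mathcal N)=\Ker(V_2\otimes_R N\to V_1\otimes_R N)$ for an \emph{arbitrary} $R$-module $N$, and then runs the snake lemma over a short exact sequence $0\to N_1\to N_2\to N_3\to 0$ to show that $\Hom_R(M,-)$ is right exact, i.e.\ that $M$ is projective; finite generation is obtained separately from Note \ref{2.12N}. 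You make the same kernel computation only on $R$-algebras, obtaining the identification of the functor $\mathcal M^*$ with the pointwise kernel $S\mapsto\Ker(V_2\otimes_R S\to V_1\otimes_R S)$ (and you are right to distinguish this from the quasi-coherent module $\mathcal K$, which need not agree with it); from this you read off ${\mathcal M^*}_{qc}=\mathcal K$, prove that ${\mathcal M^*}_{qc}\to\mathcal M^*$ is an epimorphism by right exactness of $-\otimes_R S$, and invoke the unnumbered theorem at the head of Section 3, again in the direction the paper actually proves, so that its dual-basis argument delivers projectivity and finite generation simultaneously. There is no circularity in this reduction: that theorem precedes Proposition \ref{1.2} and its proof does not depend on it. As for what each route buys: yours is shorter modulo the earlier theorem, avoids both the snake lemma and the separate appeal to Note \ref{2.12N}, and makes the structure of $\mathcal M^*$ and its quasi-coherent cover explicit; the paper's is self-contained homological algebra and records along the way the stronger fact that $0\to\Hom_R(M,N)\to V_2\otimes_R N\to V_1\otimes_R N\to 0$ is exact for every $R$-module $N$, not only for $R$-algebras.
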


\begin{proof} 1. $M$ is a finitely generated $R$-module, by Note \ref{2.12N}.

2. 
Given an $R$-module $N$, if we take $\Hom_{\mathcal R}(-,\mathcal N)$ on the above exact sequence we obtain the exact sequence
$$0\to \Hom_{\mathcal R}(\mathcal M,\mathcal N)\to V_2\otimes N\to V_1\otimes N\to 0$$
by Proposition \ref{tercer} and Lemma \ref{1.1N}.

3. Consider an exact sequence of morphisms of $R$-modules
$$0\to N_1\to N_2\to N_3\to 0$$
We obtain the diagram
$$\xymatrix{0 \ar[r] & \Hom_{\mathcal R}(\mathcal M,\mathcal N_1) \ar[r] \ar[d] &  V_2\otimes N_1 \ar[r] \ar[d]  & V_1\otimes N_1 \ar[r] \ar[d] & 0\\
0 \ar[r] & \Hom_{\mathcal R}(\mathcal M,\mathcal N_2) \ar[r] \ar[d] &  V_2\otimes N_2 \ar[r] \ar[d] & V_1\otimes N_2 \ar[r] \ar[d] & 0
\\ 0 \ar[r] & \Hom_{\mathcal R}(\mathcal M,\mathcal N_3) \ar[r]  &  V_2\otimes N_3 \ar[r] \ar[d]  & V_1\otimes N_3 \ar[r] \ar[d] & 0\\ &  & 0 & 0 & }$$
By the snake lemma, $\Hom_{\mathcal R}(\mathcal M,\mathcal N_2)\to \Hom_{\mathcal R}(\mathcal M,\mathcal N_3)$ is surjective. Then, $M$ is a projective module of finite type.
%

\end{proof}

\begin{proposition} Let $M$ be an $R$-module.
$M$ is a finitely presented $R$-module iff there exists an exact sequence of functors of $\mathcal R$-modules
$$\mathcal V_1^*\overset i\to\mathcal V_2^*\overset\pi\to\mathcal M\to 0.$$
\end{proposition}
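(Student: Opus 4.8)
The plan is to prove the two implications separately: the forward direction directly from a finite free presentation, and the reverse direction via the $\bar{\phantom{x}}$-functor exactness criterion combined with the classical product-characterization of finitely presented modules.

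For ($\Rightarrow$), I would start from an exact sequence of $R$-modules $R^m\to R^n\to M\to 0$ witnessing finite presentation. Applying $-\otimes_R S$ (which is right exact) to this sequence shows that $S^m\to S^n\to M\otimes_R S\to 0$ is exact for every commutative $R$-algebra $S$; since kernels, images and cokernels of $\mathcal R$-module morphisms are computed pointwise, the functorial sequence $\mathcal R^m\to\mathcal R^n\to\mathcal M\to 0$ is exact. Because $(\mathcal R^n)^*(S)=\Hom_S(S^n,S)=S^n$, one has $\mathcal R^n=(\mathcal R^n)^*$ and likewise $\mathcal R^m=(\mathcal R^m)^*$, so both are module schemes; taking $V_1=R^m$ and $V_2=R^n$ yields the required sequence $\mathcal V_1^*\to\mathcal V_2^*\to\mathcal M\to 0$.

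For ($\Leftarrow$), I would first note that $\mathcal M=(\mathcal M^*)^*$ is a dual functor (quasi-coherent modules are reflexive), so every term of $\mathcal V_1^*\to\mathcal V_2^*\to\mathcal M\to 0$ is a dual functor and the criterion from Section~2 applies: a sequence of dual functors is exact iff it stays exact after applying $\bar{\phantom{x}}(N)$ for every $R$-module $N$. Using the Examples $\overline{\mathcal V_i^*}(N)=\Hom_R(V_i,N)$ and $\overline{\mathcal M}(N)=M\otimes_R N$, I obtain, functorially in $N$, the exact sequence
$$\Hom_R(V_1,N)\to\Hom_R(V_2,N)\to M\otimes_R N\to 0,$$
that is, $M\otimes_R-$ is the cokernel of $\bar i\colon\Hom_R(V_1,-)\to\Hom_R(V_2,-)$. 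I would then evaluate this presentation at $N=\prod_\alpha N_\alpha$ for an arbitrary family and compare it with the product, over $\alpha$, of the presentations at each $N_\alpha$. Since $\Hom_R(V,\prod_\alpha N_\alpha)=\prod_\alpha\Hom_R(V,N_\alpha)$ for any $V$, and since direct products are exact in the category of $R$-modules, the five lemma applied to the resulting commutative ladder shows that the canonical map $M\otimes_R\prod_\alpha N_\alpha\to\prod_\alpha(M\otimes_R N_\alpha)$ is an isomorphism for every family. By the classical characterization (a module is finitely presented iff tensoring with it commutes with arbitrary direct products), $M$ is finitely presented.

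The main obstacle is entirely in the reverse direction: the right move is to exploit the functorial presentation through \emph{products} rather than direct limits (tensoring already commutes with direct limits, so that route carries no information), and the one genuinely delicate point is verifying that the canonical comparison map $M\otimes_R\prod_\alpha N_\alpha\to\prod_\alpha(M\otimes_R N_\alpha)$ really is the map induced on cokernels by the isomorphisms $\Hom_R(V_i,\prod_\alpha N_\alpha)\cong\prod_\alpha\Hom_R(V_i,N_\alpha)$, so that it inherits bijectivity. This amounts to the naturality of $\bar\pi$ with respect to the projections $\prod_\alpha N_\alpha\to N_\alpha$; once that square is checked to commute, the rest is formal.
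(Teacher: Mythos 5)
Your proof is correct, but for the substantive direction ($\Leftarrow$) it takes a genuinely different route from the paper's. The paper argues internally to its own calculus: by Note \ref{2.12N}, a morphism $\mathcal V^*\to \mathcal M'$ is an element of $V\otimes_R M'$ and hence factors through the quasi-coherent module of a finitely generated submodule of $M'$, so the epimorphism $\pi$ already forces $M$ to be finitely generated; it then chooses a finite free cover $f\colon L\to M$ with kernel $K$, lifts $\pi$ through $\mathcal L$ (using surjectivity of $V_2\otimes_R L\to V_2\otimes_R M$, after replacing $\mathcal V_2^*$ by $\mathcal V_2^*\oplus\mathcal L$ so that the lift $g$ is an epimorphism), induces $g'\colon \mathcal V_1^*\to\mathcal K$ with $g'_R$ surjective, and concludes from Note \ref{2.12N} again that $K$ is finitely generated, hence $M$ finitely presented. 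You instead convert the functorial presentation into the statement that $M\otimes_R-$ is the cokernel of the natural transformation $\Hom_R(V_1,-)\to \Hom_R(V_2,-)$ (via the exactness criterion for dual functors stated after Notation \ref{notation}, applicable because $\mathcal M=\mathcal M^{**}$ is a dual functor, together with the Examples computing $\overline{\mathcal V_i^*}$ and $\overline{\mathcal M}$), deduce that $M\otimes_R-$ commutes with arbitrary direct products because $\Hom_R(V_i,-)$ does and products are exact in the category of $R$-modules, and finish by the classical theorem of Lenzing that a module is finitely presented iff its tensor functor commutes with products. Both arguments are sound; the difference is where the weight is carried. Your route is shorter and makes explicit the conceptual link to the classical product criterion (the same circle of ideas behind Corollary \ref{CML}), but it outsources the core finiteness argument to an external classical theorem not proved in the paper; the paper's route is self-contained in its framework (resting only on $\Hom_{\mathcal R}(\mathcal V^*,\mathcal M')=V\otimes_R M'$ from Theorem \ref{prop4}) and simultaneously rehearses the lifting-through-free-presentations technique that reappears in \ref{1.4} and \ref{3.6}. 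The ``delicate point'' you flag --- that the map induced on cokernels is the canonical comparison map $M\otimes_R\prod_\alpha N_\alpha\to\prod_\alpha(M\otimes_R N_\alpha)$ --- is real but is settled exactly as you say, by naturality of $\bar\pi$ with respect to the projections; and your forward direction coincides with the one the paper treats as immediate and omits.
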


\begin{proof} $\Leftarrow)$
1. $M$ is a finitely generated $R$-module, by Note \ref{2.12N}.

2. Let $f\colon L=R^n\to M$ be an epimorphism, $K:=\Ker f\subseteq L$,  and $\tilde f\colon \mathcal L\to\mathcal M$ and $i'\colon \mathcal K\to\mathcal L$ the associated morphisms. There exists a morphism $g\colon \mathcal V_2^*\to \mathcal L$ such that $ \tilde f\circ g=\pi$, because $\Hom_{\mathcal R}(\mathcal V_2^*,\mathcal L)\overset{\text{\ref{prop4}}}=V_2\otimes_R L\to V_2\otimes_R M\overset{\text{\ref{prop4}}}=\Hom_{\mathcal R}(\mathcal V_2^*,\mathcal M)$ is surjective.
We can suppose that $g$ is an epimorphism, replacing $\mathcal V_2^*$ by $\mathcal V_2^*\oplus\mathcal L$ (and $\mathcal V_1^*$ by $\mathcal V_1^*\oplus\mathcal L$). Consider the exact sequences of morphisms
$$K \to L\to  M\to 0$$
$$\xymatrix{\Hom_{\mathcal R}(\mathcal V_1^*,\mathcal K) & \Hom_{\mathcal R}(\mathcal V_1^*,\mathcal L)
& \Hom_{\mathcal R}(\mathcal V_1^*,\mathcal M) & \\  V_1\otimes_R K \ar@{=}[u]^-{\text{\ref{prop4}}} \ar[r]&
V_1\otimes_R L \ar@{=}[u]^-{\text{\ref{prop4}}} \ar[r] & V_1\otimes_R M \ar@{=}[u]^-{\text{\ref{prop4}}} \ar[r] & 0}$$
There exists a morphism $g'\colon \mathcal V_1^*\to \mathcal K$ such that $i'\circ g'=g\circ i$. The morphism $g'_R$ is surjective because $g_R$ is surjective. Then, $g'$ is an epimorphism and $K$ is a finitely generated module, by Note \ref{2.12N}. Hence, $M$ is a finitely presented module.

\end{proof}

\begin{proposition} \cite[Tag 058L]{stacks-project} \label{1.4} If $0\to \mathcal M_1\overset{i'}\to \mathcal M_2\overset{\pi'}\to\mathcal M_3\to 0$ is an exact sequence of functors of $\mathcal R$-modules and $M_3$ is a finitely presented module, then this exact sequence splits.
\end{proposition}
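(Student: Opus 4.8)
The plan is to reduce the statement to a purity property of the underlying $R$-modules and then exploit finite presentation of $M_3$. First I would note that each $\mathcal M_i$ is quasi-coherent, hence reflexive, so $\mathcal M_i=(\mathcal M_i^*)^*$ is a dual functor with $\overline{\mathcal M_i}(N)=M_i\otimes_R N$ (as recorded in the Examples). Applying the criterion that a sequence $\mathbb F\to\mathbb F'\to\mathbb F''$ of dual functors is exact iff $\bar{\mathbb F}(N)\to\bar{\mathbb F'}(N)\to\bar{\mathbb F''}(N)$ is exact for every $R$-module $N$, the hypothesis yields that
$$0\to M_1\otimes_R N\to M_2\otimes_R N\to M_3\otimes_R N\to 0$$
is exact for every $R$-module $N$. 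In particular (take $N=R$) we recover the short exact sequence of $R$-modules $0\to M_1\to M_2\overset{\pi}\to M_3\to 0$, and moreover this sequence stays exact after tensoring with any $R$-module, i.e. it is pure. (Equivalently, one could derive this purity directly from the functorial exactness using the $R\oplus N$ splitting.)

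Next I would use that $M_3$ is finitely presented to fix a presentation $R^m\overset{A}\to R^n\overset{p}\to M_3\to 0$, so $\Ima A=\Ker p$. Since $\pi$ is surjective and $R^n$ is free, $p$ lifts to some $\tilde p\colon R^n\to M_2$ with $\pi\circ\tilde p=p$. As $\pi\circ\tilde p\circ A=p\circ A=0$, the composite $\tilde p\circ A\colon R^m\to M_2$ takes values in $M_1=\Ker\pi$. The aim is to correct $\tilde p$ by a map into $M_1$ so that the corrected map annihilates $A$ and hence descends to a section of $\pi$; concretely, I need an element $q\in\Hom_R(R^n,M_1)$ with $q\circ A=\tilde p\circ A$.

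The main obstacle is producing this $q$, and this is precisely where purity is used. For a fixed matrix $A$, the cokernel of the precomposition map $\Hom_R(R^n,N)\to\Hom_R(R^m,N)$, $\varphi\mapsto\varphi\circ A$, is a right-exact functor of $N$, so it has the form $C\otimes_R N$ for a fixed $R$-module $C$, naturally in $N$. By construction $\tilde p\circ A$ lies in the image of $\Hom_R(R^n,M_2)\to\Hom_R(R^m,M_2)$, so its class in $C\otimes_R M_2$ vanishes; but $\tilde p\circ A$ also represents a class in $C\otimes_R M_1$ mapping to it. Purity makes $C\otimes_R M_1\to C\otimes_R M_2$ injective, so that class is zero, which means $\tilde p\circ A$ lies in the image of $\Hom_R(R^n,M_1)\to\Hom_R(R^m,M_1)$ and furnishes the desired $q$. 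Finally, $\tilde p-q$ vanishes on $\Ima A=\Ker p$, hence factors as $\tilde p-q=s\circ p$ for a unique $s\colon M_3\to M_2$; since $\pi\circ(\tilde p-q)=p$ with $p$ surjective, we get $\pi\circ s=\mathrm{id}_{M_3}$. Transporting $s$ back through the equivalence of categories gives a splitting $\mathcal M_3\to\mathcal M_2$ of $\pi'$, so the original exact sequence splits.
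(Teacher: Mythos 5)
Your proof is correct, but it takes a genuinely different route from the paper's. The paper never leaves the functor category: from a finite presentation of $M_3$ it builds a module-scheme resolution $0\to\mathcal V_0^*\to\mathcal V_1^*\overset{i}{\to}\mathcal V_2^*\overset{\pi}{\to}\mathcal M_3\to 0$ (where $V_0=\Coker[V_2\to V_1]$), notes that applying $\Hom_{\mathcal R}(-,\mathcal N)$ yields an exact sequence of modules because Theorem~\ref{prop4} identifies $\Hom_{\mathcal R}(\mathcal V^*,\mathcal N)$ with $V\otimes_R N$ (so exactness is just right-exactness of the tensor product), and then carries out the chase --- lift $\pi$ to $f\colon\mathcal V_2^*\to\mathcal M_2$, induce $g\colon\mathcal V_1^*\to\mathcal M_1$, find $f'\colon\mathcal V_2^*\to\mathcal M_1$ with $g=f'\circ i$, and factor $f-i'\circ f'$ through $\mathcal M_3$ --- entirely with morphisms of functors, so purity is never named and the section appears directly. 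You instead reduce at once to commutative algebra: functor exactness gives universal exactness (purity) of $0\to M_1\to M_2\to M_3\to 0$ (your remark that evaluating at the algebra $R\oplus N$ already gives this is the cheapest justification), and you then prove the classical theorem of \cite[Tag 058L]{stacks-project} that a pure short exact sequence with finitely presented cokernel splits. The algebraic core is the same in both: your $C$ is the paper's $V_0$, your $q$ plays the role of the paper's $f'$, and the injectivity of $C\otimes_R M_1\to C\otimes_R M_2$ that you extract from purity is exactly what the paper gets for free by applying $\Hom_{\mathcal R}(\mathcal V_0^*,-)=V_0\otimes_R(-)$ to the monomorphism $i'$. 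Your route buys transparency and self-containment at the module level, making clear that the proposition is precisely the classical purity-splitting statement once one knows that exactness of quasi-coherent functors means universal exactness; the paper's route stays functorial, exercises machinery (Theorem~\ref{prop4}) reused throughout the paper, and delivers the section as a morphism of functors with no final transport through the equivalence of categories. One minor point: your appeal to the principle that a right-exact functor of $N$ has the form $C\otimes_R N$ is stronger than needed (Eilenberg--Watts also requires compatibility with direct sums); it is cleaner to observe that $\Hom_R(R^k,N)\cong R^k\otimes_R N$ naturally and that cokernels commute with $-\otimes_R N$ by right-exactness, which produces the natural isomorphism with $C\otimes_R N$ directly.
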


\begin{proof}
Let $\mathcal V_1^*\overset i\to\mathcal V_2^*\overset\pi\to\mathcal M_3\to 0$ be an exact sequence of $\mathcal R$-modules and let $\mathcal V_0^*:=\Ker i$ ($V_0:=\Coker[V_2\to V_1]$). Let $i_0\colon \mathcal V_0^*\to\mathcal V_1^*$ the inclusion morphism. Then,
$$0\to \mathcal V_0^*\overset{i_0} \to \mathcal V_1^*\overset i\to\mathcal V_2^*\overset\pi\to\mathcal M_3\to 0$$
is an exact sequence, and  
if we take $\Hom_{\mathcal R}(-,\mathcal N)$, for any quasi-coherent $\mathcal R$-module $\mathcal N$, the sequence
$$\xymatrix  @=12pt{0 \ar[r] & \Hom_{\mathcal R}(\mathcal M_3,\mathcal N) \ar[r] & \Hom_{\mathcal R}(\mathcal V_2^*,\mathcal N)\ar[r] &  \Hom_{\mathcal R}(\mathcal V_1^*,\mathcal N)\ar[r] &  \Hom_{\mathcal R}(\mathcal V_0^*,\mathcal N)\ar[r] & 0
\\ & (*) & V_2\otimes_R N \ar@{=}[u]^-{\text{\ref{prop4}}} &
V_1\otimes_R N \ar@{=}[u]^-{\text{\ref{prop4}}} & V_0\otimes_R N \ar@{=}[u]^-{\text{\ref{prop4}}}  & }$$
is exact. 

Let $f\colon \mathcal V_2^*\to\mathcal M_2$ be a morphism such that $\pi'\circ f=\pi$.  Let $g\colon \mathcal V_1^*\to \mathcal M_1$ be the  morphism such that the diagram
$$\xymatrix { & 0\ar[r] & \mathcal M_1 \ar[r]^-{i'} &
\mathcal M_2 \ar[r]^-{\pi'} & \mathcal M_3 \ar[r] & 0\\
0 \ar[r] & \mathcal V_0^* \ar[r]^-{i_0} & \mathcal V_1^* \ar[r]^-i \ar[u]^-g& \mathcal V_2^* \ar[r]^-\pi \ar[u]^-f & \mathcal M_3 \ar[r] \ar@{=}[u] & 0}$$
is commutative.
Observe that $i'\circ g\circ i_0=f\circ i\circ i_0=0$, then $g\circ i_0=0$. 
 By the exact sequence $(*)$, there exists a morphism $f'\colon \mathcal V_2^*\to \mathcal M_1$ such that $g=f'\circ i$. Therefore, $f-i'\circ f'$ is zero over $\mathcal V_1^*$. Then, there exists a morphism $s\colon \mathcal M_3\to \mathcal M_2$ such that $f-i'\circ f'=s\circ \pi$. Then, $\pi=\pi'\circ f=
\pi'\circ (f-i'\circ f')=\pi'\circ s\circ \pi$ and $\pi'\circ s=Id$.

\end{proof}

\begin{proposition} \label{3.6} Let $P$ be a finitely presented $R$-module,  $M\to P$ an epimorphism and $f\colon \mathcal M\to \mathcal P$ the associated morphism. Consider the exact sequence of morphisms of $\mathcal R$-modules
$$0\to \Ker f\to \mathcal M\to \mathcal P\to 0$$
Then, the sequence of morphisms  of $\mathcal R$-modules
$$0\to \mathcal P^*\to \mathcal M^*\to (\Ker f)^*\to 0$$ is exact.
More generally, the sequence  of morphisms of $R$-modules
$$0\to \Hom_{\mathcal R}(\mathcal P,\mathcal N)\to \Hom_{\mathcal R}(\mathcal M,\mathcal N)\to \Hom_{\mathcal R}(\Ker f,\mathcal N)\to 0$$
is exact, for any $R$-module $N$.

Finally, $\Ker f$ is a reflexive functor of $\mathcal R$-modules.
\end{proposition}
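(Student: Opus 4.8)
The plan is to notice that the entire proposition reduces to a single fact — that the given short exact sequence splits — after which all three assertions are purely formal consequences of additivity.

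First I would confirm that $0\to \Ker f\to\mathcal M\xrightarrow{f}\mathcal P\to 0$ is indeed a short exact sequence of functors of $\mathcal R$-modules: $f$ is an epimorphism because $M\to P$ is surjective and $\mathcal M(S)\to\mathcal P(S)$ is $M\otimes_R S\to P\otimes_R S$, which is surjective by right-exactness of $\otimes_R S$; and $\Ker f$ is its kernel by construction. The point to stress is the shape of this sequence: the quotient $\mathcal M_3=\mathcal P$ is the quasi-coherent module attached to the \emph{finitely presented} module $P$, whereas $\mathcal M_1=\Ker f$ and $\mathcal M_2=\mathcal M$ are only functors of $\mathcal R$-modules. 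This is exactly the situation of Proposition \ref{1.4}, so that result applies and produces a section $s\colon\mathcal P\to\mathcal M$ of $f$, i.e. a direct-sum decomposition $\mathcal M=\Ker f\oplus\mathcal P$ in which $\Ker f\hookrightarrow\mathcal M$ and $f$ are the structural inclusion and projection.

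Once the splitting is established, I would simply feed it through the relevant additive functors. The dual $(-)^*=\mathbb{Hom}_{\mathcal R}(-,\mathcal R)$ is an additive contravariant functor, so it sends the split sequence to the split short exact sequence $0\to\mathcal P^*\to\mathcal M^*\to(\Ker f)^*\to 0$, which is the first claim. Applying instead the additive contravariant functor $\Hom_{\mathcal R}(-,\mathcal N)$ to the same split sequence yields the split exact sequence $0\to\Hom_{\mathcal R}(\mathcal P,\mathcal N)\to\Hom_{\mathcal R}(\mathcal M,\mathcal N)\to\Hom_{\mathcal R}(\Ker f,\mathcal N)\to 0$ for every $R$-module $N$ (and evaluating the dual sequence at an $R$-algebra $S$, i.e. taking $N=S$, recovers this, so the two statements agree). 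For the last assertion I would use that the double dual $(-)^{**}$ is additive and that the canonical morphism $\mathbb X\to\mathbb X^{**}$ is natural: under $\mathcal M=\Ker f\oplus\mathcal P$ the canonical map $\mathcal M\to\mathcal M^{**}$ is the direct sum of $\Ker f\to(\Ker f)^{**}$ and $\mathcal P\to\mathcal P^{**}$. Since $\mathcal M$ and $\mathcal P$ are quasi-coherent, hence reflexive by Theorem \ref{reflex}, the total map and its $\mathcal P$-summand are isomorphisms, which forces $\Ker f\to(\Ker f)^{**}$ to be an isomorphism; thus $\Ker f$ is reflexive.

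The only genuine content — and therefore the step that merits care — is the invocation of Proposition \ref{1.4}: one must be sure its hypotheses permit the left-hand term $\mathcal M_1=\Ker f$ to be an \emph{arbitrary} (in particular non–quasi-coherent) functor of $\mathcal R$-modules, the finite-presentation assumption being placed only on the quotient $\mathcal P$. Granting that (as stated), no computation remains: finite direct sums are preserved by $(-)^*$, by $\Hom_{\mathcal R}(-,\mathcal N)$ and by $(-)^{**}$ merely because these are additive functors on the additive category of $\mathcal R$-modules, so the splitting propagates mechanically into all three conclusions.
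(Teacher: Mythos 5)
Your reduction of the whole proposition to a splitting of $0\to \Ker f\to\mathcal M\to\mathcal P\to 0$ fails, and it fails exactly at the step you yourself flagged as the one meriting care. Proposition \ref{1.4} is, in this paper's notational conventions, a statement about exact sequences of \emph{quasi-coherent} modules: the calligraphic letters $\mathcal M_1,\mathcal M_2,\mathcal M_3$ denote quasi-coherent functors (it is the functorial form of \cite[Tag 058L]{stacks-project}: a universally exact sequence of modules with finitely presented cokernel splits), and its proof genuinely uses quasi-coherence of all the terms -- the lift $f\colon\mathcal V_2^*\to\mathcal M_2$ of $\pi$ exists because $\Hom_{\mathcal R}(\mathcal V_2^*,\mathcal M_2)\to\Hom_{\mathcal R}(\mathcal V_2^*,\mathcal M_3)$ is $V_2\otimes_R M_2\to V_2\otimes_R M_3$, and the factorization $g=f'\circ i$ invokes the exact sequence $(*)$, which is established only for quasi-coherent coefficients $\mathcal N$, here $\mathcal N=\mathcal M_1$. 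So you cannot take $\mathcal M_1=\Ker f$, which is not quasi-coherent in general.

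Worse, the splitting you assert is simply false, not merely unjustified. Since $\Hom_{\mathcal R}(\mathcal P,\mathcal M)=\Hom_R(P,M)$ (the equivalence between $R$-modules and quasi-coherent modules), a section $\mathcal P\to\mathcal M$ of $f$ would come from a section $P\to M$ of the epimorphism $M\to P$; take $R=\mathbb Z$, $M=\mathbb Z$, $P=\mathbb Z/2\mathbb Z$, where $\Hom_{\mathbb Z}(\mathbb Z/2\mathbb Z,\mathbb Z)=0$. Equivalently, if the sequence split then $\Ker f$ would be the cokernel of a morphism of quasi-coherent modules, hence quasi-coherent, which it is not: in this example $\Ker f_S=0$ for $S=\mathbb Z/2\mathbb Z$ while $\Ker f_{\mathbb Z}\otimes_{\mathbb Z} S\neq 0$. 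This collapses all three of your deductions, since each was fed through the splitting. The paper's proof gets around precisely this obstruction: given $g\colon\Ker f\to\mathcal N$ with $\mathcal N$ quasi-coherent, it forms the pushout $\mathcal N\underset{\Ker f}\oplus\mathcal M$ and observes that this pushout \emph{is} quasi-coherent -- it is the quasi-coherent module associated with $\Coker[\Ker f_R\to N\oplus M]$, because tensoring $0\to\Ker f_R\to M\to P\to 0$ with $S$ shows $\Ker f_R\otimes_R S\to\Ker f_S$ is surjective. Proposition \ref{1.4} then legitimately applies to $0\to\mathcal N\to\mathcal N\underset{\Ker f}\oplus\mathcal M\to\mathcal P\to 0$, and the resulting retraction composed with $\mathcal M\to\mathcal N\underset{\Ker f}\oplus\mathcal M$ is the desired lift of $g$. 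In other words, one proves surjectivity of $\Hom_{\mathcal R}(\mathcal M,\mathcal N)\to\Hom_{\mathcal R}(\Ker f,\mathcal N)$ for every quasi-coherent $\mathcal N$ even though the original sequence admits no section; reflexivity of $\Ker f$ then follows not from any direct-sum decomposition but by dualizing $0\to\mathcal P^*\to\mathcal M^*\to(\Ker f)^*\to 0$ once more and identifying $(\Ker f)^{**}$ with $\Ker[\mathcal M^{**}\to\mathcal P^{**}]=\Ker f$.
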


\begin{proof} We have to prove that every morphism $g\colon \Ker f\to \mathcal N$  lifts to a morphism
$\mathcal M\to \mathcal N$. It is equivalent to prove that the exact sequence of morphisms of $\mathcal R$-modules
$$0\to \mathcal N\to \mathcal N\underset{\Ker f}\oplus \mathcal M\to \mathcal P\to 0$$
splits. $\mathcal N\underset{\Ker f}\oplus \mathcal M$ is the  quasicoherent $\mathcal R$-module associated with the cokernel of the morphism $\Ker f_R\to N\oplus M,$ $k\mapsto (k,-g_R(k))$. By \ref{1.4}, this exact sequence splits.

Finally, the sequence of morphisms of $\mathcal R$-modules
$$0\to (\Ker f)^{**} \to \mathcal M^{**}=\mathcal M\to \mathcal P^{**}=\mathcal P$$
is exact, then $(\Ker f)^{**}=\Ker f$.

\end{proof}

\section{FP-functors}

\begin{definition} We shall say that a functor of $R$-modules $\mathbb M$ is an FP-functor if $$\Hom_{\mathcal R}(\mathbb M, \ilim{i}\mathcal N_i)=\ilim{i}\Hom_{\mathcal R}(\mathbb M, \mathcal N_i)$$
for every direct system of quasi-coherent modules $\{\mathcal N_i\}$.
\end{definition}

\begin{example} Module schemes are FP-functors:
$$\Hom_{\mathcal R}(\mathcal M^*, \ilim{i}\mathcal N_i)\overset{\text{\ref{prop4}}}=
M\otimes_R (\ilim{i} N_i)=\ilim{i} (M\otimes_R N_i)\overset{\text{\ref{prop4}}}=
\ilim{i}\Hom_{\mathcal R}(\mathcal M^*, \mathcal N_i).$$

\end{example}

\begin{theorem} \label{4.8} Let $\mathbb F_1$ and  $\mathbb F_2$ 
be FP-functors of $\mathcal R$-modules and $f\colon \mathbb F_1\to\mathbb F_2$ a morphism of $\mathcal R$-modules.
Then, $\Coker f$ is an FP-functor of $\mathcal R$-modules.\end{theorem}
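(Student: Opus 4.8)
The plan is to combine the universal property of the cokernel with the exactness of filtered direct limits, reducing everything to the hypotheses on $\mathbb F_1$ and $\mathbb F_2$. By definition, $\Coker f$ fits into the exact sequence of $\mathcal R$-modules $\mathbb F_1\overset f\to \mathbb F_2\to \Coker f\to 0$. For any $\mathcal R$-module $\mathcal N$, a morphism $\Coker f\to\mathcal N$ is the same datum as a morphism $\mathbb F_2\to \mathcal N$ whose composite with $f$ vanishes, so the universal property of the cokernel yields a natural identification
$$\Hom_{\mathcal R}(\Coker f,\mathcal N)=\Ker\big[\Hom_{\mathcal R}(\mathbb F_2,\mathcal N)\overset{\circ f}{\to} \Hom_{\mathcal R}(\mathbb F_1,\mathcal N)\big].$$
This is the first step, and it requires nothing more than unwinding the definition of cokernel in the category of $\mathcal R$-modules; it holds for every $\mathcal N$ and is natural in $\mathcal N$.

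Next I would fix a direct system of quasi-coherent modules $\{\mathcal N_i\}_{i\in I}$ (with $I$ upward directed) and substitute $\mathcal N=\ilim{i}\mathcal N_i$ into the identification above, obtaining
$$\Hom_{\mathcal R}(\Coker f,\ilim{i}\mathcal N_i)=\Ker\big[\Hom_{\mathcal R}(\mathbb F_2,\ilim{i}\mathcal N_i)\to \Hom_{\mathcal R}(\mathbb F_1,\ilim{i}\mathcal N_i)\big].$$
Because $\mathbb F_1$ and $\mathbb F_2$ are FP-functors, the two inner $\Hom$'s each commute with the direct limit, so the right-hand side becomes the kernel of the induced morphism $\ilim{i}\Hom_{\mathcal R}(\mathbb F_2,\mathcal N_i)\to \ilim{i}\Hom_{\mathcal R}(\mathbb F_1,\mathcal N_i)$.

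The final step, which carries the real content, is to move the kernel past the direct limit. Since $I$ is upward directed, filtered direct limits of $R$-modules are exact and therefore commute with the formation of kernels; hence the kernel just obtained equals $\ilim{i}\Ker[\Hom_{\mathcal R}(\mathbb F_2,\mathcal N_i)\to\Hom_{\mathcal R}(\mathbb F_1,\mathcal N_i)]$. Applying the first-step identification once more, this time in each index $i$, rewrites this last limit as $\ilim{i}\Hom_{\mathcal R}(\Coker f,\mathcal N_i)$, which is precisely the FP-condition for $\Coker f$. The only point demanding care is the invocation of exactness of the filtered limit: one must use that $I$ is genuinely directed, so that $\ilim{}$ is exact and thus commutes with kernels. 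Everything else is a formal consequence of the universal property of the cokernel together with the FP-hypotheses on $\mathbb F_1$ and $\mathbb F_2$.
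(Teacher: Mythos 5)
Your proof is correct. Note that the paper states Theorem \ref{4.8} without any proof at all, evidently regarding it as formal; your argument --- identifying $\Hom_{\mathcal R}(\Coker f,\mathcal N)$ with $\Ker\big[\Hom_{\mathcal R}(\mathbb F_2,\mathcal N)\to\Hom_{\mathcal R}(\mathbb F_1,\mathcal N)\big]$ via the universal property of the cokernel (which holds here because cokernels of $\mathcal R$-modules are computed pointwise), invoking the FP-hypotheses on $\mathbb F_1$ and $\mathbb F_2$, and then using exactness of filtered direct limits of $R$-modules to commute $\Ker$ with $\ilim{i}$ --- is precisely the verification the authors leave to the reader. The only step you leave implicit, and which deserves a line, is that every identification in your chain is induced by the canonical comparison morphisms and is natural in the relevant variable, so that the composite isomorphism is indeed the canonical map $\ilim{i}\Hom_{\mathcal R}(\Coker f,\mathcal N_i)\to\Hom_{\mathcal R}(\Coker f,\ilim{i}\mathcal N_i)$ demanded by the definition of FP-functor; this is a routine diagram check and does not affect the validity of the argument.
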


\begin{proposition} \label{1.9} Let $P$ be a finitely presented module and $\{\mathbb M_i\}$ a direct system of $\mathcal R$-modules. Then,
$$\Hom_{\mathcal R}(\mathcal P,\ilim{i} \mathbb M_i)=\ilim{i} 
\Hom_{\mathcal R}(\mathcal P,\mathbb M_i).$$
In particular, $\mathcal P$ is an FP-functor.\end{proposition}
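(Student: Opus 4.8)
The plan is to reduce the statement entirely to the classical characterization \ref{4.4} of finitely presented modules, using Proposition \ref{tercer} as the transport mechanism between the functorial category and the ordinary category of $R$-modules. The key observation is that the functor $\Hom_{\mathcal R}(\mathcal P,-)$ can be computed purely in terms of ordinary $R$-module homomorphisms: by Proposition \ref{tercer}, for any $\mathcal R$-module $\mathbb M$ one has $\Hom_{\mathcal R}(\mathcal P,\mathbb M)=\Hom_R(P,\mathbb M(R))$, via $f\mapsto f_R$. So the whole proof will be a short chain of identifications.

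First I would evaluate the direct limit at $R$. Since direct limits of $\mathcal R$-modules are computed pointwise (as recorded in the preliminaries, where $(\ilim{i}\mathbb M_i)(S)=\ilim{i}(\mathbb M_i(S))$), one has in particular $(\ilim{i}\mathbb M_i)(R)=\ilim{i}(\mathbb M_i(R))$. Applying Proposition \ref{tercer} to the $\mathcal R$-module $\ilim{i}\mathbb M_i$, this gives
$$\Hom_{\mathcal R}(\mathcal P,\ilim{i}\mathbb M_i)=\Hom_R(P,\ilim{i}(\mathbb M_i(R))).$$

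Next I would invoke the finite presentation of $P$. By \ref{4.4}, a finitely presented module $P$ satisfies $\Hom_R(P,\ilim{i} N_i)=\ilim{i}\Hom_R(P,N_i)$ for every direct system $\{N_i\}$ of $R$-modules; applying this with $N_i:=\mathbb M_i(R)$ yields $\Hom_R(P,\ilim{i}(\mathbb M_i(R)))=\ilim{i}\Hom_R(P,\mathbb M_i(R))$. Finally, applying Proposition \ref{tercer} once more (now to each $\mathbb M_i$ separately) identifies the right-hand side with $\ilim{i}\Hom_{\mathcal R}(\mathcal P,\mathbb M_i)$, closing the chain of equalities. The \emph{in particular} assertion is then immediate, since every direct system of quasi-coherent modules $\{\mathcal N_i\}$ is a fortiori a direct system of $\mathcal R$-modules.

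I do not expect any genuine obstacle: the argument is a three-step identification, and the only point requiring a moment's care is the commutation of evaluation-at-$R$ with the direct limit, which is guaranteed by the pointwise description of colimits of $\mathcal R$-modules. All the real content lives in the classical equivalence \ref{4.4}, which Proposition \ref{tercer} simply lifts into the functorial setting.
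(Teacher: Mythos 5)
Your chain of identifications is precisely the paper's own proof: evaluate the direct limit at $R$ using the pointwise computation of colimits of $\mathcal R$-modules, transport across Proposition \ref{tercer}, commute $\Hom_R(P,-)$ with the direct limit using finite presentation of $P$, and transport back via \ref{tercer}. The one point to repair is your citation of \ref{4.4} for the middle step: inside this paper that is circular, because the forward implication of \ref{4.4} ($P$ finitely presented $\Rightarrow$ $\mathcal P$ is an FP-functor) is itself proved as an immediate consequence of Proposition \ref{1.9}, the very statement you are proving. The middle step should instead be justified by the classical, purely module-theoretic fact that $\Hom_R(P,-)$ commutes with direct limits when $P$ is finitely presented; the paper uses this standard result without citation in its proof of \ref{1.9}, and the introduction's ``(see \ref{4.4})'' is only pointing to the functorial reformulation, not offering a proof of the classical fact. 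With that citation replaced, your argument is exactly the published one.
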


\begin{proof} By \ref{tercer},
$\Hom_{\mathcal R}(\mathcal P,\ilim{i} \mathbb M_i)=
\Hom_{R}( P,\ilim{i} \mathbb M_i(R))=\ilim{i} \Hom_{R}( P,\mathbb M_i(R)) $ $=$ $\ilim{i} 
\Hom_{\mathcal R}(\mathcal P,\mathbb M_i)$.
\end{proof}

\begin{proposition} \label{4.4} Let $M$ be an $R$-module. $M$ is a finitely presented module iff $\mathcal M$ is an FP-functor of $\mathcal R$-modules.

\end{proposition}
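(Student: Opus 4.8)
The plan is to treat the two implications separately and to observe at once that the forward direction is already established: if $M$ is finitely presented, then Proposition \ref{1.9}, applied to direct systems $\{\mathbb M_i\}$ consisting of quasi-coherent modules, gives $\Hom_{\mathcal R}(\mathcal M,\ilim{i}\mathcal N_i)=\ilim{i}\Hom_{\mathcal R}(\mathcal M,\mathcal N_i)$, i.e.\ $\mathcal M$ is an FP-functor. So I would dispose of $(\Rightarrow)$ with a one-line reference to \ref{1.9}, and concentrate on the converse.

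For $(\Leftarrow)$, assume $\mathcal M$ is an FP-functor. The first move is to rewrite this hypothesis entirely in terms of $R$-modules via Proposition \ref{tercer}. Given any direct system $\{N_i\}$ of $R$-modules, the associated system $\{\mathcal N_i\}$ of quasi-coherent modules satisfies $\ilim{i}\mathcal N_i=\mathcal{(\ilim{i}N_i)}$, because tensor product commutes with direct limits; hence, evaluating at $R$ and using \ref{tercer} (together with $\mathcal N_i(R)=N_i$), both members of the FP-identity become the comparison map $\ilim{i}\Hom_R(M,N_i)\to\Hom_R(M,\ilim{i}N_i)$. Thus the FP-functor hypothesis is equivalent to saying that $\Hom_R(M,-)$ commutes with direct limits of $R$-modules, which is exactly the classical condition quoted in the Introduction.

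From here I would run the standard two-step retraction argument. \emph{Step 1 (finite generation):} write $M=\ilim{i}M_i$ over the directed set of its finitely generated submodules; applying the commutation to $\mathrm{id}_M\in\Hom_R(M,M)=\ilim{i}\Hom_R(M,M_i)$ factors the identity through some inclusion $M_{i_0}\hookrightarrow M$, which forces this inclusion to be surjective, so $M=M_{i_0}$ is finitely generated. \emph{Step 2 (finite presentation):} choose an epimorphism $R^n\to M$ with kernel $K$, write $K=\ilim{j}K_j$ over its finitely generated submodules, and put $M_j:=R^n/K_j$, so that $\ilim{j}M_j=R^n/K=M$; the same lifting of $\mathrm{id}_M$ through some $M_{j_0}=R^n/K_{j_0}$ exhibits $M$ as a direct summand of the finitely presented module $R^n/K_{j_0}$, whence $M$ is finitely presented.

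The main obstacle is really the reduction in the second paragraph: one must check carefully that the natural comparison map defining the FP-functor condition corresponds, under \ref{tercer}, to the natural map $\ilim{i}\Hom_R(M,N_i)\to\Hom_R(M,\ilim{i}N_i)$, so that the isomorphism on the functor side transfers to an isomorphism on the module side. Once this translation is secured, the remaining arguments are routine, the only nontrivial classical input being that a direct summand of a finitely presented module is again finitely presented.
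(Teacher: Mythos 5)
Your proposal is correct; the forward direction is exactly the paper's (an immediate appeal to Proposition \ref{1.9}), but your converse takes a somewhat different, more self-contained route. The paper's converse is a one-step argument: it invokes the standard fact that every $R$-module is a direct limit of finitely presented modules, writes $M=\ilim{i}P_i$, uses the FP-property (via the equivalence between $R$-modules and quasi-coherent $\mathcal R$-modules) to factor $\mathrm{Id}_M$ through some $P_i$, and concludes that $M$ is a direct summand of $P_i$, hence finitely presented. You instead first make explicit the translation -- via Proposition \ref{tercer} and the identity $\ilim{i}\mathcal N_i=$ (quasi-coherent module of $\ilim{i}N_i$) -- that the FP-hypothesis on $\mathcal M$ is equivalent to the classical condition that $\Hom_R(M,-)$ commutes with direct limits (the paper uses this translation silently), and then re-prove the classical characterization with two tailor-made direct systems: the finitely generated submodules of $M$, giving finite generation, and the quotients $R^n/K_j$ by finitely generated submodules of the relation module $K$, giving finite presentation by the retract trick. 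The underlying mechanism -- factoring $\mathrm{Id}_M$ through a stage of a direct system converging to $M$ -- is identical in both proofs; the paper's version is shorter because it takes ``$M$ is a direct limit of finitely presented modules'' as a black box, whereas yours constructs the needed systems by hand and so only requires the elementary fact that a direct summand of a finitely presented module is finitely presented. Both arguments are complete and valid.
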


\begin{proof} $\Leftarrow)$ Any $R$-module is a direct limit of
finitely presented modules. Write $M=\ilim{i} P_i$, where $P_i$ is a finitely presented module, for any $i$. Then, $Id\colon M\to M$ factors through a morphism $f_i\colon M\to P_i$,  for some $i$. Then, $M$ is a direct summand of $P_i$, and it is finitely presented.

$\Rightarrow)$ It is an immediate  consequence of Proposition \ref{1.9}.

\end{proof}

Recall Notation \ref{notation}.

\begin{proposition}  \label{4.7} $\mathbb M$ is an FP-functor iff $\overline{\mathbb M^*}$ commutes with direct limits of $R$-modules.
\end{proposition}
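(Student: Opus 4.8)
The plan is to unwind both conditions and observe that, after the right identifications, they are literally the same statement; both implications then follow at once. First I would pin down the functor $\overline{\mathbb M^*}$. Since $\mathbb M^*=\mathbb Hom_{\mathcal R}(\mathbb M,\mathcal R)$ is a dual functor, Notation \ref{notation} applies with $\mathbb N=\mathbb M$ and gives, for every $R$-module $N$,
$$\overline{\mathbb M^*}(N)=\Hom_{\mathcal R}(\mathbb M,\mathcal N).$$
Thus $\overline{\mathbb M^*}$ is nothing but the functor $N\mapsto \Hom_{\mathcal R}(\mathbb M,\mathcal N)$ from $R$-modules to $R$-modules.

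Next I would match up the two notions of direct limit. The quasi-coherent functor commutes with direct limits: for any direct system $\{N_i\}$ of $R$-modules and any commutative $R$-algebra $S$ one has $(\ilim{i} N_i)\otimes_R S=\ilim{i}(N_i\otimes_R S)$, so the quasi-coherent module associated with $\ilim{i} N_i$ is $\ilim{i}\mathcal N_i$. Furthermore, by the equivalence between $R$-modules and quasi-coherent $\mathcal R$-modules, a direct system of quasi-coherent modules is the same datum as a direct system of $R$-modules.

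Putting these together, the assertion that $\overline{\mathbb M^*}$ commutes with direct limits, i.e. $\overline{\mathbb M^*}(\ilim{i} N_i)=\ilim{i}\overline{\mathbb M^*}(N_i)$ for every direct system $\{N_i\}$ of $R$-modules, reads
$$\Hom_{\mathcal R}(\mathbb M,\ilim{i}\mathcal N_i)=\ilim{i}\Hom_{\mathcal R}(\mathbb M,\mathcal N_i),$$
and by the previous paragraph this ranges over exactly the direct systems of quasi-coherent modules. This is precisely the defining property of an FP-functor, so the two conditions coincide and both implications are proved simultaneously. There is no serious obstacle here; the only care needed is to confirm that the direct limit in the definition of FP-functor (taken in $\mathcal R$-modules along a system of quasi-coherent modules) and the one in ``commutes with direct limits'' (taken in $R$-modules) correspond under the equivalence of categories --- which is exactly what the second step supplies.
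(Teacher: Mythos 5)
Your proposal is correct and is essentially the paper's own argument: the paper's proof consists precisely of the observation that $\overline{\mathbb M^*}(\ilim{i} N_i)=\Hom_{\mathcal R}(\mathbb M,\ilim{i}\mathcal N_i)$, which makes the two conditions literally coincide. You have merely spelled out the identifications (the formula $\overline{\mathbb M^*}(N)=\Hom_{\mathcal R}(\mathbb M,\mathcal N)$ from Notation \ref{notation}, and the fact that the quasi-coherent module associated with $\ilim{i}N_i$ is $\ilim{i}\mathcal N_i$) that the paper leaves implicit.
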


\begin{proof} It is an immediate consequence  of the equality $$\Hom_{\mathcal R}(\mathbb M,\ilim{i}\mathcal N_i)=\overline{\mathbb M^*}(\ilim{i} N_i).$$

\end{proof}

\begin{proposition} \label{4.75} Let $M$ be an $R$-module and let  $N \subseteq M$ be an $R$-submodule.  Let $\tilde N$ be the image of the obvious morphism $\mathcal N\to \mathcal M$. $\tilde N$ is an FP-functor iff there exist finitely presented modules $P$ and $P'$ and an exact sequence of morphisms
of $\mathcal R$-modules
$$0\to \tilde N\to \mathcal P\to \mathcal P'\to 0$$
(and in particular, $N$ is isomorphic to a finitely generated submodule of $P$).

\end{proposition}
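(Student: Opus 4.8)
The plan is to treat the two implications separately, leaning on Proposition \ref{3.6} for the ``if'' direction and on the defining property of FP-functors together with Theorem \ref{4.8} for the ``only if'' direction.

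For the implication $(\Leftarrow)$, suppose we are given an exact sequence $0 \to \tilde N \to \mathcal P \to \mathcal P' \to 0$ with $P,P'$ finitely presented. The epimorphism $\mathcal P \to \mathcal P'$ comes from an epimorphism $P \to P'$ of $R$-modules, and $\tilde N = \Ker[\mathcal P \to \mathcal P']$. Hence Proposition \ref{3.6}, applied with $P$ in the role of ``$M$'' and $P'$ in the role of ``$P$'', would give, for every $R$-module $N'$, a short exact sequence
$$0 \to \Hom_R(P',N') \to \Hom_R(P,N') \to \Hom_{\mathcal R}(\tilde N,\mathcal{N'}) \to 0.$$
Thus $\overline{\tilde N^*}(N')=\Hom_{\mathcal R}(\tilde N,\mathcal{N'})$ is the cokernel of $\Hom_R(P',N') \to \Hom_R(P,N')$. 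Since $P$ and $P'$ are finitely presented, the functors $\Hom_R(P,-)$ and $\Hom_R(P',-)$ commute with direct limits of $R$-modules (Proposition \ref{4.4}), and cokernels commute with direct limits; therefore $\overline{\tilde N^*}$ commutes with direct limits and $\tilde N$ is an FP-functor by Proposition \ref{4.7}.

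For the implication $(\Rightarrow)$, assume $\tilde N$ is an FP-functor. First I would write $M=\ilim{\lambda} P_\lambda$ as a direct limit of finitely presented modules, so that $\mathcal M=\ilim{\lambda}\mathcal P_\lambda$ is a direct limit of quasi-coherent modules. Applying the FP-property of $\tilde N$ to this system, the inclusion $\tilde N\hookrightarrow\mathcal M$ lies in $\Hom_{\mathcal R}(\tilde N,\ilim{\lambda}\mathcal P_\lambda)=\ilim{\lambda}\Hom_{\mathcal R}(\tilde N,\mathcal P_\lambda)$, hence factors as $\tilde N\overset{j}\to\mathcal P\to\mathcal M$ for some finitely presented $P:=P_{\lambda_0}$; as the composite is a monomorphism, $j$ is a monomorphism. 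Next I would identify $\Coker j$. Composing the epimorphism $\pi\colon\mathcal N\to\tilde N$ with $j$ yields a morphism of quasi-coherent modules $\mathcal N\to\mathcal P$, which corresponds to a homomorphism $\psi\colon N\to P$; since $\pi$ is an epimorphism, $\tilde N=\Ima[j\circ\pi]$ as a subfunctor of $\mathcal P$, so $\tilde N(S)=\Ima[\psi\otimes_R S\colon N\otimes_R S\to P\otimes_R S]$ for every $S$. By right-exactness of the tensor product, $(\Coker j)(S)=(P\otimes_R S)/\tilde N(S)=(\Coker\psi)\otimes_R S$, i.e. $\Coker j=\mathcal P'$ is the quasi-coherent module associated with $P':=\Coker\psi$. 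By Theorem \ref{4.8}, $\Coker j$ is an FP-functor (being the cokernel of a morphism between the FP-functors $\tilde N$ and $\mathcal P$); being also quasi-coherent, $P'$ is finitely presented by Proposition \ref{4.4}. This produces the desired exact sequence $0\to\tilde N\to\mathcal P\to\mathcal P'\to 0$.

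Finally, for the parenthetical remark, taking $R$-points of this sequence gives an exact sequence $0\to N\to P\to P'\to 0$ of $R$-modules; since $P$ is finitely generated and $P'$ is finitely presented, $N$ is finitely generated, and $\psi$ realizes it as a finitely generated submodule of $P$. I expect the main obstacle to be the $(\Rightarrow)$ direction, and specifically the verification that $\Coker j$ is quasi-coherent: this is the step that lets one invoke Proposition \ref{4.4} to upgrade ``FP-functor'' to ``finitely presented module'', and it depends on the precise description of $\tilde N$ as the image of $\psi$ together with right-exactness of $\otimes_R$.
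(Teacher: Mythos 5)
Your proof is correct and follows essentially the same route as the paper's: for $(\Leftarrow)$ it applies Proposition \ref{3.6} to get the exact sequence of duals and concludes via commutation with direct limits and Proposition \ref{4.7}; for $(\Rightarrow)$ it factors $\tilde N\hookrightarrow\mathcal M=\ilim{\lambda}\mathcal P_\lambda$ through some $\mathcal P_{\lambda_0}$, identifies the cokernel as quasi-coherent (the paper does this by noting $\Coker[\tilde N\to\mathcal P_i]=\Coker[\mathcal N\to\mathcal P_i]$, which you verify by right-exactness of the tensor product), and invokes Theorems \ref{4.8} and \ref{4.4}. The only additions are expository: you spell out why $\Coker j$ is quasi-coherent and justify the parenthetical remark, which the paper leaves implicit.
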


\begin{proof} $\Rightarrow)$ Write $M=\ilim{i} P_i$, where $\{P_i\}$ is a direct system of finitely presented $R$-modules. Then, $\mathcal M=\ilim{i}\mathcal P_i$ and for some $i$ the morphism $\tilde N\hookrightarrow \mathcal M$ factors through an injective morphism $\tilde N\to \mathcal P_i$.
Consider the composite morphism $\mathcal N\to \tilde N \to\mathcal P_i$.
$\Coker[\tilde N\to \mathcal P_i]=\Coker[\mathcal N\to \mathcal P_i]=:\mathcal Q$ is quasi-coherent. Then we have the exact sequence of morphisms
$$0\to \tilde N\to \mathcal P_i\to \mathcal Q\to 0$$
By \ref{4.8}, $\mathcal Q$ is an FP-functor. By \ref{4.4}, $Q$ is a finitely presented $R$-module. We are done.

$\Leftarrow)$  By \ref{3.6}, the sequence of morphisms of $R$-modules
$$0\to \mathcal P'^*\to \mathcal P^* \to (\Ker f)^*\to 0$$
is exact. $\overline{\mathcal P'^*}$ and  $\overline{\mathcal P^*} $ commute with direct limits of $\mathcal R$-modules, therefore $\overline{(\Ker f)^*}$ commutes with direct limits of $\mathcal R$-modules. Hence, $\Ker f$ is an FP-functor by Proposition \ref{4.7}.

\end{proof}

\begin{proposition} Let $P$ be a finitely presented module and $f\colon \mathcal M\to \mathcal P$ an epimorphism. Then, $\Ker f$ is an FP-functor iff $M$ is a finitely presented module.\end{proposition}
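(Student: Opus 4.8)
The plan is to reduce everything to Proposition \ref{3.6} and to the FP-criterion of Proposition \ref{4.7}. First I would note that we are genuinely in the situation of Proposition \ref{3.6}: since the category of $R$-modules is equivalent to the category of quasi-coherent $\mathcal R$-modules and this equivalence preserves cokernels (if $C=\Coker(M\to P)$ then the associated morphism has cokernel $\mathcal C$), the hypothesis that $f\colon\mathcal M\to\mathcal P$ is an epimorphism is equivalent to surjectivity of the underlying $R$-module morphism $M\to P$. Hence Proposition \ref{3.6} applies verbatim and, for every $R$-module $N$, yields the short exact sequence $0\to\Hom_{\mathcal R}(\mathcal P,\mathcal N)\to\Hom_{\mathcal R}(\mathcal M,\mathcal N)\to\Hom_{\mathcal R}(\Ker f,\mathcal N)\to 0$. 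In the notation of \ref{notation} this reads $0\to\overline{\mathcal P^*}(N)\to\overline{\mathcal M^*}(N)\to\overline{(\Ker f)^*}(N)\to 0$ (recall that $\Ker f$ is reflexive by Proposition \ref{3.6}, so $\overline{(\Ker f)^*}(N)=\Hom_{\mathcal R}(\Ker f,\mathcal N)$).

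Next I would fix a direct system $\{N_i\}$ of $R$-modules with $N=\ilim{i} N_i$ and compare $\ilim{i}$ of the above sequences with the single sequence evaluated at $N$. This produces a commutative diagram of two short exact sequences: the top row is $\ilim{i}$ of the sequences, which remains exact because filtered direct limits of $R$-modules are exact, and the bottom row is the sequence for $N$. The vertical arrows are the canonical comparison maps $\ilim{i}\overline{(-)}(N_i)\to\overline{(-)}(N)$. The crucial input is that the left-hand vertical arrow $\ilim{i}\Hom_R(P,N_i)\to\Hom_R(P,N)$ is an isomorphism, since $P$ is finitely presented.

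Both implications then follow from the short five lemma. For the direction $(\Leftarrow)$, if $M$ is finitely presented then the middle vertical arrow $\ilim{i}\Hom_R(M,N_i)\to\Hom_R(M,N)$ is also an isomorphism, so the right-hand arrow is an isomorphism; thus $\overline{(\Ker f)^*}$ commutes with direct limits and $\Ker f$ is an FP-functor by Proposition \ref{4.7}. For $(\Rightarrow)$, if $\Ker f$ is an FP-functor then by Proposition \ref{4.7} the right-hand arrow is an isomorphism; combined with the left-hand isomorphism, the five lemma forces the middle arrow to be an isomorphism, so $\overline{\mathcal M^*}$ commutes with direct limits, i.e.\ $\mathcal M$ is an FP-functor by Proposition \ref{4.7}, and therefore $M$ is finitely presented by Proposition \ref{4.4}.

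I expect the only point requiring genuine care to be the bookkeeping in the first paragraph, namely confirming that the epimorphism hypothesis legitimately places us under Proposition \ref{3.6} (so that the dualized sequence is exact on the right, not merely left-exact) and that the three $\Hom$-terms are precisely the values of $\overline{\mathcal P^*}$, $\overline{\mathcal M^*}$ and $\overline{(\Ker f)^*}$. Once that identification is secured, the remainder is a routine diagram chase built on exactness of filtered direct limits, with Propositions \ref{4.7} and \ref{4.4} performing the translation between ``commutes with direct limits'' and ``FP-functor / finitely presented''.
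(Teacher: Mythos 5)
Your proof is correct and takes essentially the same approach as the paper: the paper proves ($\Rightarrow$) with exactly your diagram --- the two short exact Hom-sequences supplied by Proposition \ref{3.6}, one at $\ilim{i}\mathcal N_i$ and one the direct limit of the sequences at $\mathcal N_i$, with the outer verticals identified and the middle forced to be an isomorphism --- and then invokes Proposition \ref{4.4}. The only cosmetic difference is that the paper delegates ($\Leftarrow$) to Proposition \ref{4.75}, whose proof is the same limit-comparison argument in dualized form, whereas you run both directions through one five-lemma diagram.
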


\begin{proof} $\Rightarrow)$ By Proposition \ref{3.6}, the rows of the diagram
$$\xymatrix @C12pt {0 \ar[r] & \Hom_{\mathcal R}(\mathcal P,\ilim{i} \mathcal N_i) \ar[r] & \Hom_{\mathcal R}(\mathcal M,\ilim{i} \mathcal N_i) \ar[r] & \Hom_{\mathcal R}(\Ker f,\ilim{i} \mathcal N_i) \ar[r] & 0\\ 0 \ar[r] & \ilim{i} \Hom_{\mathcal R}(\mathcal P,\mathcal N_i) \ar@{=}[u]  \ar[r] & \ilim{i} \Hom_{\mathcal R}(\mathcal M,\mathcal N_i)  \ar[r] & \ilim{i} \Hom_{\mathcal R}(\Ker f,\mathcal N_i) \ar@{=}[u]  \ar[r] &  0}$$
are exact, then $\Hom_{\mathcal R}(\mathcal M,\ilim{i} \mathcal N_i)=\ilim{i} \Hom_{\mathcal R}(\mathcal M,\mathcal N_i)$.
Then, $\mathcal M$ is an FP-functor and $M$ is a finitely presented $R$-module, by Proposition \ref{4.4}.

$\Leftarrow)$ It is an immediate consequence of Proposition \ref{4.75}.
\end{proof}

\begin{theorem}  \label{4.6N} $\mathbb M$ is an FP-functor of $\mathcal R$-modules iff $\mathbb M^*$ is the cokernel of a morphism $F\colon \oplus_{i} \mathcal P_i^*\to \oplus_{j} \mathcal Q_j^*$, where $P_i,Q_j$ are finitely presented $R$-modules, for every $i,j$. 
\end{theorem}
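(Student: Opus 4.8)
The plan is to translate everything into the ``bar'' formalism of Section 2 and reduce the theorem to a statement about functors $R\text{-Mod}\to R\text{-Mod}$ that commute with direct limits. Throughout I write $T:=\overline{\mathbb M^*}$, so that $T(N)=\Hom_{\mathcal R}(\mathbb M,\mathcal N)$, and I recall from Proposition \ref{4.7} that $\mathbb M$ is an FP-functor iff $T$ commutes with direct limits of $R$-modules. I also use repeatedly that $\overline{\mathcal P^*}=\Hom_R(P,-)$, that module schemes are reflexive, whence $\overline{\oplus_i\mathcal P_i^*}=\oplus_i\Hom_R(P_i,-)$ by the remark on sums of reflexive functors, and the criterion of Section 2 that a three-term sequence of \emph{dual} functors is exact iff it stays exact after applying $\overline{(\cdot)}$.

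For the easy implication, suppose $\mathbb M^*=\Coker F$ with $F\colon\oplus_i\mathcal P_i^*\to\oplus_j\mathcal Q_j^*$ and all $P_i,Q_j$ finitely presented. Since $\oplus_i\mathcal P_i^*$, $\oplus_j\mathcal Q_j^*$ and $\mathbb M^*$ are dual functors (Proposition \ref{refpro}), the exactness criterion turns $\oplus_i\mathcal P_i^*\to\oplus_j\mathcal Q_j^*\to\mathbb M^*\to0$ into an exact sequence $\oplus_i\Hom_R(P_i,-)\to\oplus_j\Hom_R(Q_j,-)\to T\to0$. Each $\Hom_R(P_i,-)$ commutes with direct limits because $P_i$ is finitely presented (see \ref{4.4}), and an arbitrary direct sum of such functors again commutes with direct limits; the same holds for the $Q_j$. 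Being the cokernel of a morphism between functors commuting with direct limits (direct limits are exact, hence preserve cokernels), $T$ commutes with direct limits, so $\mathbb M$ is an FP-functor by Proposition \ref{4.7}.

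The substance is the converse. Assume $\mathbb M$ is an FP-functor, so $T$ commutes with direct limits. The crucial bridge is that, for finitely presented $Q$, morphisms of dual functors $\mathcal Q^*\to\mathbb M^*$ are canonically $T(Q)$: by Proposition \ref{trivial} and reflexivity (\ref{reflex}), $\Hom_{\mathcal R}(\mathcal Q^*,\mathbb M^*)=\Hom_{\mathcal R}(\mathbb M,\mathcal Q^{**})=\Hom_{\mathcal R}(\mathbb M,\mathcal Q)=T(Q)$, and more generally $\Hom_{\mathcal R}(\mathcal P^*,\oplus_j\mathcal Q_j^*)=\oplus_j\Hom_R(Q_j,P)$ via Proposition \ref{refpro}. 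Under $\overline{(\cdot)}$ these match the Yoneda descriptions $\mathrm{Nat}(\overline{\mathcal Q^*},T)=T(Q)$ and $\mathrm{Nat}(\overline{\mathcal P^*},\oplus_j\overline{\mathcal Q_j^*})=\oplus_j\Hom_R(Q_j,P)$; verifying that the normalizations agree, i.e.\ that the morphism attached to $t\in T(Q)$ has bar $g\mapsto T(g)(t)$, is a direct unwinding of the displayed isomorphisms and is the one point demanding care.

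Granting the bridge, I build a presentation of $T$ by generators and relations and lift it. Since $T$ commutes with direct limits and every module is a filtered colimit of finitely presented ones, $T(N)=\ilim{k}T(Q_k)$ whenever $N=\ilim{k}Q_k$, so every element of every $T(N)$ has the form $T(g)(t)$ for some finitely presented $Q$, some $g\colon Q\to N$, and some $t\in T(Q)$. Hence the Yoneda maps attached to the set of all pairs $(Q_j,t_j)$ (one $Q_j$ per isomorphism class of finitely presented module, $t_j\in T(Q_j)$) assemble into an epimorphism $\oplus_j\Hom_R(Q_j,-)\twoheadrightarrow T$, which by the bridge is $\overline\psi$ for a morphism $\psi\colon\oplus_j\mathcal Q_j^*\to\mathbb M^*$; thus $\psi$ is an epimorphism of dual functors. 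Its kernel $\mathbb K=\Ker\overline\psi$ again commutes with direct limits (kernels of maps of such functors do, as filtered colimits are exact), so the same construction gives an epimorphism $\oplus_i\Hom_R(P_i,-)\twoheadrightarrow\mathbb K$ with $P_i$ finitely presented; composing with $\mathbb K\hookrightarrow\oplus_j\Hom_R(Q_j,-)$ and lifting through the bridge yields $\phi\colon\oplus_i\mathcal P_i^*\to\oplus_j\mathcal Q_j^*$ with $\operatorname{Im}\overline\phi=\mathbb K=\Ker\overline\psi$. Then $\oplus_i\mathcal P_i^*\xrightarrow{\phi}\oplus_j\mathcal Q_j^*\xrightarrow{\psi}\mathbb M^*\to0$ has exact bar, hence is exact by the criterion, giving $\mathbb M^*=\Coker\phi$. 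The main obstacle is exactly the normalization compatibility in the bridge, together with the routine but indispensable checks that kernels, cokernels and arbitrary direct sums all preserve the property of commuting with direct limits.
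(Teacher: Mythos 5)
Your proposal is correct and takes essentially the same route as the paper: Proposition \ref{4.7} together with the bar-exactness criterion for dual functors in the easy direction, and in the converse the canonical epimorphism from a direct sum of module schemes $\mathcal Q^*$ indexed by all pairs (finitely presented $Q$, element of $\Hom_{\mathcal R}(\mathcal Q^*,\mathbb M^*)$), whose kernel again commutes with direct limits so the construction can be iterated. The only cosmetic difference is that the paper keeps the kernel at the level of $\mathcal R$-modules, observing via Proposition \ref{trivial} that $\Ker G=(\Coker H)^*$ is itself a dual functor and applying the construction to it directly, whereas you work with $\Ker\overline{\psi}$ on the bar side and lift back through the Yoneda/normalization compatibility --- precisely the point you flag as needing care, and which the paper dispatches as ``obvious''.
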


\begin{proof} $\Leftarrow)$ $\overline{\mathcal P^*}(\ilim{i} N_i)=\Hom_R(P,\ilim{i} N_i) =\ilim{i} \Hom_R(P,\ilim{i} N_i)=\ilim{i} \overline{\mathcal P^*}(N_i)$, for every finitely presented $R$-module $P$. Then, $\overline{\oplus_{i} \mathcal P_i^*}=\oplus_{i} \overline{\mathcal P_i^*}$ and $\overline{\oplus_{j} \mathcal Q_j^*}=\oplus_{j} \overline{\mathcal Q_j^*}$
commute with direct limits. Hence, $\overline{\mathbb M^*}=\overline{\Coker F}$ commutes with direct limits and $\mathbb M$ is a FP-functor, by \ref{4.7}.

$\Rightarrow)$ Choose a set $A$ of representatives of the isomorphism classes of finitely presented $R$-modules. Let $B$ be the set of the pairs
$(\mathcal P^*,g)$, where $P\in A$ and $g\in \Hom_{\mathcal R}(\mathcal P^*,\mathbb M^*)$. The obvious morphism
$$G\colon \oplus_{(\mathcal P^*,g)\in B}\mathcal P^*\to \mathbb M^*$$
is an epimorphism: Let $Q$ be a finitely presented module and
$g'\in \overline{\mathbb M^*}(Q)=\Hom_{\mathcal R}(\mathcal Q^*,\mathbb M^*)$. Obviously, through the morphism 
$g'\colon \overline{\mathcal Q^*}\to \overline{\mathbb M^*}$, $Id_Q$ is mapped to $g'$. Hence,  the morphism $$ \overline{\oplus_{(\mathcal P^*,g)\in B}\mathcal P^*}(Q)\to 
\overline{\mathbb M^*}(Q)$$ is surjective.  Every module  is a direct limit of finitely presented $R$-modules. $\overline{\oplus_{(\mathcal P^*,g)\in B}\mathcal P^*}$ and $\overline{\mathbb M^*}$ commute with direct limits, then
$G$ is an epimorphism.

$\Ker G$ is a dual functor: By Proposition \ref{trivial}, $G$ is the dual morphism of a morphism $H\colon \mathbb M\to \prod_{(\mathcal P^*,g)\in B}\mathcal P$. Then, $\Ker G=(\Coker H)^*$. 
$\overline{\Ker G}$ commutes with direct limits, hence again there exists an epimorphism $\oplus_i \mathcal Q_i^*\to \Ker G$. We are done.

\end{proof}

\begin{corollary} Let $K$ be a field and $\mathbb M$ an  $\mathcal K$-module.  $\mathbb M$ is an FP-functor iff $\mathbb M^*$ is quasi-coherent.

\end{corollary}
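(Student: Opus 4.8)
The plan is to read off the corollary from the structure theorem \ref{4.6N}, using two features special to a field $K$: every finitely presented $K$-module is a finite-dimensional vector space, hence free and in particular projective of finite type; and for such a module $P$ the module scheme $\mathcal P^*$ is already quasi-coherent, since $\mathcal P^*(S)=\Hom_S(P\otimes_K S,S)=\Hom_K(P,S)=\Hom_K(P,K)\otimes_K S$, so $\mathcal P^*$ is the quasi-coherent module associated with $\Hom_K(P,K)$. In the one-dimensional case this specializes to $\mathcal K^*=\mathcal K$.

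For the implication $\Rightarrow$, suppose $\mathbb M$ is an FP-functor. By Theorem \ref{4.6N} I may write $\mathbb M^*=\Coker(F\colon \oplus_i\mathcal P_i^*\to\oplus_j\mathcal Q_j^*)$ with all $P_i,Q_j$ finitely presented. By the previous paragraph each $\mathcal P_i^*$ and $\mathcal Q_j^*$ is quasi-coherent; since $\otimes_K$ commutes with direct sums, $\oplus_i\mathcal P_i^*$ and $\oplus_j\mathcal Q_j^*$ are quasi-coherent, and since the cokernel of a morphism of quasi-coherent modules is quasi-coherent, $\mathbb M^*=\Coker F$ is quasi-coherent.

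For the converse, suppose $\mathbb M^*$ is quasi-coherent, so $\mathbb M^*=\mathcal V$ with $V:=\mathbb M^*(K)$. As $K$ is a field, $V$ is free, say $V=\oplus_{s\in S}K$; hence $\mathcal V=\oplus_{s\in S}\mathcal K=\oplus_{s\in S}\mathcal K^*$, using $\mathcal K=\mathcal K^*$. Since $K$ is finitely presented, this exhibits $\mathbb M^*$ as the cokernel of the zero morphism $0\to\oplus_{s\in S}\mathcal K^*$, a presentation of exactly the shape demanded by Theorem \ref{4.6N}. Therefore $\mathbb M$ is an FP-functor.

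I do not expect a genuine obstacle here: all the real content is already contained in Theorem \ref{4.6N}, and the role of the field hypothesis is only to collapse that theorem's description, making the module schemes $\mathcal P^*$ of finitely presented $P$ quasi-coherent and making every quasi-coherent module a direct sum of copies of $\mathcal K=\mathcal K^*$. The single point deserving care is the identification of $\mathcal P^*$ with the quasi-coherent module associated with $\Hom_K(P,K)$ for finite-dimensional $P$ (equivalently $\mathcal K^*=\mathcal K$), which rests on the natural isomorphism $\Hom_K(P,K)\otimes_K S\cong\Hom_K(P,S)$, valid because $P$ is projective of finite type.
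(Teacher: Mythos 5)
Your proof is correct and follows exactly the route the paper intends: the corollary is stated without proof as an immediate consequence of Theorem \ref{4.6N}, and your argument supplies precisely that derivation (over a field, $\mathcal P^*$ for finitely presented $P$ is the quasi-coherent module associated with $\Hom_K(P,K)$, so cokernels of morphisms $\oplus_i\mathcal P_i^*\to\oplus_j\mathcal Q_j^*$ are quasi-coherent; conversely any quasi-coherent $\mathcal V=\oplus_s\mathcal K^*$ is trivially such a cokernel). No gaps; the one point needing care, the identification $\mathcal P^*=\mathcal{(P^*)}$ via $\Hom_K(P,K)\otimes_KS\cong\Hom_K(P,S)$ for finite-dimensional $P$, is handled correctly.
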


Let $K$ be a field.
In \cite[2.2]{Amel}, it has been proved that reflexive FP-functors of $\mathcal K$-modules are module schemes.

\begin{proposition} If $\mathbb P$ is an FP-functor of $\mathcal R$-modules, then $\mathbb P_{|S}$ is an FP-functor of $\mathcal S$-modules, for any commuative $R$-algebra.\end{proposition}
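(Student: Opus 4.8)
The plan is to reduce every $\Hom$ computed over the base $\mathcal S$ to a $\Hom$ computed over the original base $\mathcal R$ by means of Proposition \ref{adj2}, and then to invoke the hypothesis that $\mathbb P$ is an FP-functor over $R$. Beyond \ref{adj2}, the only facts I will need are that restriction of scalars along $R\to S$ commutes with direct limits and that forming the quasi-coherent module commutes with direct limits.

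First I would fix a direct system $\{\mathcal N_i\}_{i\in I}$ of quasi-coherent $\mathcal S$-modules; it is the system associated with a direct system $\{N_i\}_{i\in I}$ of $S$-modules with $S$-linear transition maps. Viewing each $N_i$ as an $R$-module by restriction of scalars, the transition maps remain $R$-linear, so we obtain a direct system of quasi-coherent $\mathcal R$-modules; write $\mathcal N_i^{R}$ for the quasi-coherent $\mathcal R$-module associated with $N_i$ regarded as an $R$-module. Since a filtered colimit of modules is computed on underlying abelian groups, restriction of scalars commutes with it, so $\ilim{i} N_i$ (as an $R$-module) is the direct limit of the $R$-modules $N_i$; and since $-\otimes_R S'$ commutes with direct limits, the quasi-coherent module of $\ilim{i} N_i$ equals $\ilim{i}\mathcal N_i^{R}$. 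The same computation over $S$ shows that $\ilim{i}\mathcal N_i$ is the quasi-coherent $\mathcal S$-module associated with $\ilim{i} N_i$.

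Now I would apply Proposition \ref{adj2} twice. For each $i$ it gives $\Hom_{\mathcal S}(\mathbb P_{|S},\mathcal N_i)=\Hom_{\mathcal R}(\mathbb P,\mathcal N_i^{R})$, and applied to the $S$-module $\ilim{i} N_i$ it gives $\Hom_{\mathcal S}(\mathbb P_{|S},\ilim{i}\mathcal N_i)=\Hom_{\mathcal R}(\mathbb P,\ilim{i}\mathcal N_i^{R})$. Because $\mathbb P$ is an FP-functor of $\mathcal R$-modules, $\Hom_{\mathcal R}(\mathbb P,\ilim{i}\mathcal N_i^{R})=\ilim{i}\Hom_{\mathcal R}(\mathbb P,\mathcal N_i^{R})$. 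Combining these three identities yields $\Hom_{\mathcal S}(\mathbb P_{|S},\ilim{i}\mathcal N_i)=\ilim{i}\Hom_{\mathcal S}(\mathbb P_{|S},\mathcal N_i)$, which is exactly the assertion that $\mathbb P_{|S}$ is an FP-functor of $\mathcal S$-modules.

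The point to verify carefully is that the isomorphism of Proposition \ref{adj2} is natural in the $S$-module argument, so that it is compatible with the transition morphisms of the system and with the canonical comparison morphism; granting this naturality, the chain of identifications respects the direct-limit structure and the proof is immediate. Alternatively, one may avoid \ref{adj2} and argue structurally through Theorem \ref{4.6N}: there $\mathbb P^*=\Coker F$ for some $F\colon\oplus_i\mathcal P_i^*\to\oplus_j\mathcal Q_j^*$ with $P_i,Q_j$ finitely presented. Restriction to $S$-algebras is exact and commutes with direct sums and with the formation of the dual (so that $(\mathbb P_{|S})^*=(\mathbb P^*)_{|S}$ and $(\mathcal P_i^*)_{|S}$ is the module scheme over $S$ of $P_i\otimes_R S$). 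Since $P_i\otimes_R S$ and $Q_j\otimes_R S$ are finitely presented $S$-modules, $(\mathbb P_{|S})^*$ is the cokernel of the restricted morphism between direct sums of such module schemes, whence $\mathbb P_{|S}$ is an FP-functor by Theorem \ref{4.6N} applied over the base ring $S$.
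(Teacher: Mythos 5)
Your proof is correct and takes essentially the same route as the paper's: both arguments apply Proposition \ref{adj2} twice to convert each $\Hom_{\mathcal S}(\mathbb P_{|S},-)$ into a $\Hom_{\mathcal R}(\mathbb P,-)$ and then invoke the FP-hypothesis for $\mathbb P$ over $\mathcal R$, with your write-up merely making explicit the restriction-of-scalars and naturality bookkeeping that the paper leaves implicit. The alternative argument via Theorem \ref{4.6N} is a valid extra, but the main argument already matches the paper's proof.
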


\begin{proof} Let $\{\mathcal N_i\}$ be a direct system of $\mathcal S$-modules. Then,

$$\Hom_{\mathcal S}(\mathbb P_{|S},\ilim{i} \mathcal N_i)\overset{\text{\ref{adj2}}}=\! \Hom_{\mathcal R}(\mathbb P,\ilim{i} \mathcal N_i)\!=\!\ilim{i}\Hom_{\mathcal R}(\mathbb P, \mathcal N_i)
\overset{\text{\ref{adj2}}}=\!\ilim{i} \Hom_{\mathcal S}(\mathbb P_{|S},\mathcal N_i).$$

\end{proof}

\begin{corollary} Let $\mathbb P$ be an FP-functor. Then,
$$\mathbb Hom_{\mathcal R}(\mathbb P, \ilim{i}\mathcal N_i)=\ilim{i}\mathbb Hom_{\mathcal R}(\mathbb P, \mathcal N_i).$$
\end{corollary}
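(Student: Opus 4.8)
The plan is to reduce the statement to the definition of FP-functor over each base algebra, exploiting that both the internal Hom and the direct limits involved are computed pointwise. Fix a commutative $R$-algebra $S$. By the definition of $\mathbb Hom_{\mathcal R}$, the value at $S$ of the left-hand side is
$$\mathbb Hom_{\mathcal R}(\mathbb P,\ilim{i}\mathcal N_i)(S)=\Hom_{\mathcal S}(\mathbb P_{|S},(\ilim{i}\mathcal N_i)_{|S}),$$
while, since direct limits of $\mathcal R$-modules are taken pointwise, the value at $S$ of the right-hand side is
$$(\ilim{i}\mathbb Hom_{\mathcal R}(\mathbb P,\mathcal N_i))(S)=\ilim{i}\Hom_{\mathcal S}(\mathbb P_{|S},(\mathcal N_i)_{|S}).$$
Thus it suffices to establish the equality of these two $S$-modules for every $S$, compatibly with the structure maps.

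Two compatibilities do the work. First, restriction to $\mathcal S$-algebras commutes with direct limits: for any commutative $S$-algebra $S'$ one has $(\ilim{i}\mathcal N_i)_{|S}(S')=(\ilim{i}\mathcal N_i)(S')=\ilim{i}(\mathcal N_i(S'))=\ilim{i}((\mathcal N_i)_{|S}(S'))$, whence $(\ilim{i}\mathcal N_i)_{|S}=\ilim{i}(\mathcal N_i)_{|S}$. Second, each $(\mathcal N_i)_{|S}$ is the quasi-coherent $\mathcal S$-module associated with $N_i\otimes_R S$, as recorded in the preliminaries, so $\{(\mathcal N_i)_{|S}\}$ is a direct system of quasi-coherent $\mathcal S$-modules.

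With these in hand, I would invoke the preceding proposition, which guarantees that $\mathbb P_{|S}$ is an FP-functor of $\mathcal S$-modules. Applying the definition of FP-functor over the base $S$ to the direct system $\{(\mathcal N_i)_{|S}\}$ then gives
$$\Hom_{\mathcal S}(\mathbb P_{|S},\ilim{i}(\mathcal N_i)_{|S})=\ilim{i}\Hom_{\mathcal S}(\mathbb P_{|S},(\mathcal N_i)_{|S}),$$
which, combined with the two displayed identifications above, is exactly the asserted equality at $S$. Since $S$ is arbitrary and all the maps involved are the natural ones, the equality holds as $\mathcal R$-modules.

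There is no substantial obstacle here: the content is entirely the base-change principle that an FP-functor restricts to an FP-functor. The only points requiring care are the two pointwise compatibilities — that restriction commutes with direct limits and that restricting a quasi-coherent module yields a quasi-coherent module — which are needed precisely so that the preceding proposition applies to the restricted direct system $\{(\mathcal N_i)_{|S}\}$.
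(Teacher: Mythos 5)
Your proof is correct and is exactly the argument the paper intends: the corollary is stated without proof as an immediate consequence of the preceding proposition, namely one evaluates both sides at an arbitrary commutative $R$-algebra $S$, uses that direct limits and $\mathbb Hom_{\mathcal R}$ are computed pointwise (so that $(\ilim{i}\mathcal N_i)_{|S}=\ilim{i}(\mathcal N_i)_{|S}$ is a direct system of quasi-coherent $\mathcal S$-modules), and applies the FP property of $\mathbb P_{|S}$ over $S$. The two compatibilities you flag are precisely the points the paper takes for granted from its preliminaries.
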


\begin{lemma} \label{4.5} Let $P$ and $P'$ be finitely presented modules and
$f\colon \mathcal P\to \mathcal P'$ an epimorphism. Then, 

\begin{enumerate} 


%

\item $\Hom_{\mathcal R}(\oplus_i \mathcal N_i^*,\Ker f)= \Hom_{\mathcal R}((\oplus_i \mathcal N_i^*)_{sch},\Ker f)$.

\item $\Ker f_R$ is a finitely generated $R$-module and the morphism $(\Ker f)_{qc}\to \Ker f$ is an epimorphism. Let $L$ be a finite free module and let $\pi\colon \mathcal L\to \Ker f$ an epimorphism.
There exists an exact sequence of morphisms of $\mathcal R$-modules
$$\mathcal V^*\to\mathcal L\overset\pi\to \Ker f\to 0$$

\end{enumerate}

\end{lemma}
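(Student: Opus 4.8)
The plan is to reduce everything to the short exact sequence of quasi-coherent modules attached to $f$. Since $f\colon\mathcal P\to\mathcal P'$ is a morphism of quasi-coherent modules it equals $\tilde{f_R}$ for some $f_R\colon P\to P'$, and $f$ is an epimorphism iff $\Coker f=\widetilde{\Coker f_R}=0$, i.e. $f_R$ is surjective. Put $K:=\Ker f_R$. Tensoring the right-exact sequence $K\to P\to P'\to 0$ with $S$ gives $(\Ker f)(S)=\Ker(P\otimes_RS\to P'\otimes_RS)=\Ima(K\otimes_RS\to P\otimes_RS)$, so $\Ker f=\tilde K$ (the image of $\mathcal K\to\mathcal P$) and $(\Ker f)(R)=K$; in particular $0\to\Ker f\to\mathcal P\overset f\to\mathcal P'\to 0$ is exact. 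For the first two assertions of \emph{(2)}: since $P$ is finitely generated and $P'$ is finitely presented, $K$ is finitely generated (lift a surjection $R^a\twoheadrightarrow P$ to $R^a\twoheadrightarrow P'$; the finitely generated kernel of the latter maps onto $K$); moreover $(\Ker f)_{qc}=\mathcal K$ because $(\Ker f)(R)=K$, and the canonical morphism $\mathcal K\to\tilde K=\Ker f$ is by construction the image epimorphism.

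For \emph{(1)} I first identify the scheme-envelope. Since $(\oplus_i\mathcal N_i^*)^*=\prod_i(\mathcal N_i^*)^*=\prod_i\mathcal N_i$ by Theorem \ref{reflex}, the functor $((\oplus_i\mathcal N_i^*)^*)_{qc}$ is the quasi-coherent module associated with $\prod_iN_i$, so $(\oplus_i\mathcal N_i^*)_{sch}$ is the module scheme associated with $\prod_iN_i$. Applying the left-exact functors $\Hom_{\mathcal R}(\oplus_i\mathcal N_i^*,-)$ and $\Hom_{\mathcal R}((\oplus_i\mathcal N_i^*)_{sch},-)$ to $0\to\Ker f\to\mathcal P\to\mathcal P'$ and using Note \ref{2.12N} produces a commutative diagram whose two outer vertical maps are the canonical morphisms $(\prod_iN_i)\otimes_RP\to\prod_i(N_i\otimes_RP)$ and its analogue for $P'$. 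Because $P$ and $P'$ are finitely presented these are isomorphisms, so the induced map on kernels, namely $\Hom_{\mathcal R}((\oplus_i\mathcal N_i^*)_{sch},\Ker f)\to\Hom_{\mathcal R}(\oplus_i\mathcal N_i^*,\Ker f)$, is an isomorphism. The argument uses only the truncated exact sequence $0\to\Ker f\to\mathcal P\to\mathcal P'$, so it applies verbatim to the kernel of any morphism between quasi-coherent modules associated with finitely presented modules.

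For the remaining part of \emph{(2)} let $\pi\colon\mathcal L\to\Ker f$ be the given epimorphism. The composite $\mathcal L\overset\pi\to\Ker f\hookrightarrow\mathcal P$ is a morphism of quasi-coherent modules, hence equals $\tilde g$ for a unique $g\colon L\to P$, and $W:=\Ker\pi=\Ker\tilde g$. Dualizing $\mathcal P^*\overset{\tilde g^*}\to\mathcal L^*\to\Coker\tilde g^*\to 0$ and using $\mathcal L^{**}=\mathcal L$, $\mathcal P^{**}=\mathcal P$ (Theorem \ref{reflex}) exhibits $W=(\Coker\tilde g^*)^*$ as a dual functor, with
$$\overline W(N)=\Hom_{\mathcal R}(\Coker\tilde g^*,\mathcal N)=\Ker(L\otimes_RN\to P\otimes_RN).$$
Since filtered colimits are exact and tensor commutes with them, $\overline W$ commutes with direct limits of $R$-modules, and so does $\overline{\oplus\mathcal V^*}=\oplus\overline{\mathcal V^*}$ for finitely presented $V$. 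Exactly as in the proof of Theorem \ref{4.6N}, the tautological morphism $G\colon\oplus_{(\mathcal V^*,\phi)}\mathcal V^*\to W$ summed over all finitely presented $V$ and all $\phi\in\Hom_{\mathcal R}(\mathcal V^*,W)$ is an epimorphism: on a finitely presented test module $Q$ the identity is carried to the chosen $\phi$, and one passes to arbitrary $N$ by direct limits using the exactness criterion for sequences of dual functors. Finally $W$ has the shape $0\to W\to\mathcal L\to\mathcal P$ with $L,P$ finitely presented, so part \emph{(1)} applies to $W$ and lets $G$ factor through the single module scheme $(\oplus_{(\mathcal V^*,\phi)}\mathcal V^*)_{sch}$, which therefore surjects onto $W=\Ker\pi$. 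Setting $\mathcal V^*:=(\oplus_{(\mathcal V^*,\phi)}\mathcal V^*)_{sch}$ yields the exact sequence $\mathcal V^*\to\mathcal L\overset\pi\to\Ker f\to 0$.

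The main obstacle is precisely this last step: producing a \emph{single} module scheme, rather than an arbitrary direct sum, surjecting onto $\Ker\pi$. Recognizing $\Ker\pi$ as the dual functor $(\Coker\tilde g^*)^*$, whose bar-functor $N\mapsto\Ker(L\otimes_RN\to P\otimes_RN)$ commutes with direct limits, is what makes both the surjection-by-module-schemes argument of Theorem \ref{4.6N} and the collapsing through part \emph{(1)} available.
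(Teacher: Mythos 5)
Your proof is correct, and its skeleton is the paper's: identify $\Ker\pi$ as the dual of an FP-functor, obtain an epimorphism $\oplus_i\mathcal Q_i^*\to\Ker\pi$ from Theorem \ref{4.6N}, and collapse it to a single module scheme by factoring through $(\oplus_i\mathcal Q_i^*)_{sch}$ via the identity of part (1). Where you genuinely diverge is in how the two intermediate steps are justified, and your route is leaner. The paper dualizes $\pi$ itself: to know that $\Coker\pi^*$ is an FP-functor it first needs $(\Ker f)^*$ to be an FP-functor and $\Ker f$ to be reflexive, both of which come from Proposition \ref{3.6} and hence ultimately from the splitting result \ref{1.4}; and for the collapsing step it runs a diagram chase on $0\to\Ker\pi\to\mathcal L\to\Ker f$, combining part (1) for $\Ker f$ with the finite-free case for $\mathcal L$. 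You instead dualize the composite $\tilde g\colon\mathcal L\to\Ker f\hookrightarrow\mathcal P$, a morphism between quasi-coherent modules associated with finitely presented modules: then $\Ker\pi=\Ker\tilde g=(\Coker\tilde g^*)^*$ needs only the reflexivity of $\mathcal L$ and $\mathcal P$; the FP-property of $\Coker\tilde g^*$ follows from your explicit formula $\overline{(\Coker\tilde g^*)^*}(N)=\Ker(L\otimes_RN\to P\otimes_RN)$ together with exactness of filtered colimits (one could also just cite \ref{4.8}, since $\mathcal P^*$ and $\mathcal L^*$ are module schemes); and the collapsing step is part (1) applied directly to $\Ker\tilde g$, once you observe --- correctly --- that the proof of part (1) never uses the epimorphism hypothesis, only the left-exact sequence $0\to\Ker f\to\mathcal P\to\mathcal P'$. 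The net gains of your version are that the exact sequence in (2) is obtained without any appeal to \ref{3.6} or \ref{1.4}, and that part (1) is recognized as valid for kernels of arbitrary morphisms between finitely presented quasi-coherent modules; you also supply the verification, left implicit in the paper, that $\Ker f_R$ is finitely generated and that $(\Ker f)_{qc}\to\Ker f$ is an epimorphism.
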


\begin{proof} (1) Given a finitely presented $R$-module, $Q$, we have that
$$\aligned \Hom_{\mathcal R}(\oplus_i \mathcal N_i^*,\mathcal Q) & =\prod_i\Hom_{\mathcal R}(\mathcal N_i^*,\mathcal Q)\overset{\text{\ref{prop4}}}=
\prod_i (N_i\otimes_R Q)=(\prod_i N_i)\otimes_R Q\\ & \overset{\text{\ref{1211}}}=
\Hom_{\mathcal R}((\oplus_i \mathcal N_i^*)_{sch},\mathcal Q).\endaligned$$

Write $\mathbb N= \oplus_i \mathcal N_i^*$. From the commutative diagram of exact rows
$$\xymatrix @=10pt{0\ar[r] & \Hom_{\mathcal R}(  \mathbb N,\Ker f)\ar[r] & \Hom_{\mathcal R}(\mathbb N,\mathcal P)\ar[r] & \Hom_{\mathcal R}(\mathbb N,\mathcal P')
\\ 0\ar[r] &  \Hom_{\mathcal R}(\mathbb N_{sch},\Ker f)\ar[r]  & \Hom_{\mathcal R}(\mathbb N_{sch},\mathcal P)\ar[r] \ar@{=}[u] &  \Hom_{\mathcal R}(\mathbb N_{sch},\mathcal P')\ar@{=}[u] }$$
we obtain that $\Hom_{\mathcal R}(\oplus_i \mathcal N_i^*,\Ker f)= \Hom_{\mathcal R}((\oplus_i \mathcal N_i^*)_{sch},\Ker f)$.

(2)  By \ref{3.6}, the sequence of morphisms of $R$-modules
$$0\to \mathcal P'^*\to \mathcal P^* \to (\Ker f)^*\to 0$$
is exact. By \ref{4.8}, $ (\Ker f)^*$ is an FP-functor.

$\Ker f_R$ is a finitely generated $R$-module and the morphism $(\Ker f)_{qc}\to \Ker f$ is an epimorfism. Hence, there exist a finite free module $L$ and an epimorphism $\pi\colon \mathcal L\to \Ker f$. $\Ker \pi = (\Coker \pi^*)^*$ and by \ref{4.8},
$\Coker \pi^*$ is an $FP$-functor. By \ref{4.6N}, there exists an epimorphism
$g'\colon \oplus \mathcal Q_i^*\to \Ker \pi$. Observe that
$$\Hom_{\mathcal R}(\mathbb N, \mathcal L)=\Hom_{(\mathcal R}(\mathbb N_{sch}, \mathcal L)$$
for any $\mathcal R$-module $\mathbb N$, and $\Hom_{\mathcal R}(\oplus \mathcal Q_i^*, \Ker f)=
\Hom_{\mathcal R}((\oplus \mathcal Q_i^*)_{sch}, \Ker f)$, then
$$\Hom_{\mathcal R}(\oplus \mathcal Q_i^*, \Ker \pi)=\Hom_{\mathcal R}((\oplus \mathcal Q_i^*)_{sch}, \Ker \pi)$$
Then, $g'$ factors through an epimorphism $\mathcal N^*:=(\oplus \mathcal Q_i^*)_{sch}\to \Ker \pi$
and we have the obvious exact sequence
$$\mathcal N^*\to \mathcal L\to \Ker f\to 0$$

%
%
%

\end{proof}

\section{Mittag-Leffler modules}

\begin{definition} \cite[Tag 0599]{stacks-project} Let $M$ an $R$-module. $M$ is said to be a Mittag-Leffler module  if for every finite free $R$-module $F$ and morphism of $R$-modules $f\colon F\to M$, there exists a finitely presented $R$-module $Q$ and a morphism of $R$-modules $g\colon F\to Q$ such that $\Ker[F\otimes_R N\to M\otimes_R N]=\Ker[F\otimes_R N\to Q\otimes_R N]$ for every $R$-module $N$, that is,
the kernel of the associated morphism $\tilde f\colon \mathcal F\to \mathcal M$ is equal to the kernel of the associated morphism $\tilde g\colon \mathcal F\to \mathcal Q$, that is, we have a commutative diagram
$$\xymatrix{& & \mathcal M\\ \mathcal F  \ar[rru]^-{\tilde f} \ar[r] \ar[rrd]_-{\tilde g}  & \Ima \tilde f=\Ima\tilde g \ar@{^{(}->}[ru]  \ar@{^{(}->}[rd] & \\ & & \mathcal Q}$$

\end{definition}

\begin{theorem} \label{4.6} Let $M$ be an $R$-module. $M$ is a Mittag-Leffler module iff for every finitely generated $R$-submodule 
$N\subseteq M$ the image of the morphism $\mathcal N\to \mathcal M$ is an FP-functor of $\mathcal R$-modules.

\end{theorem}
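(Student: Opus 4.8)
The plan is to reduce both implications to Proposition \ref{4.75}, which says that the image $\tilde N$ of $\mathcal N\to\mathcal M$ is an FP-functor precisely when there is an exact sequence $0\to\tilde N\to\mathcal P\to\mathcal P'\to 0$ with $P,P'$ finitely presented. The bridge to the Mittag-Leffler condition is one observation: for any morphism $f\colon F\to M$ with $F$ finite free, writing $N:=f(F)\subseteq M$, the image of the associated $\tilde f\colon\mathcal F\to\mathcal M$ is exactly $\tilde N$. Indeed $f$ factors as $F\twoheadrightarrow N\hookrightarrow M$, and since the assignment $M\mapsto\mathcal M$ sends surjections to epimorphisms, $\tilde f$ factors as $\mathcal F\twoheadrightarrow\mathcal N\to\mathcal M$; hence $\Ima\tilde f=\Ima[\mathcal N\to\mathcal M]=\tilde N$.

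For ($\Rightarrow$), I would start from a finitely generated $N\subseteq M$, choose a surjection $F\twoheadrightarrow N$ from a finite free module and compose with the inclusion to obtain $f\colon F\to M$, so that $\Ima\tilde f=\tilde N$ by the observation above. The Mittag-Leffler hypothesis then provides a finitely presented $Q$ and a morphism $g\colon F\to Q$ with $\Ker\tilde f=\Ker\tilde g$. Since image equals coimage for these functors, equality of kernels forces $\tilde N=\Ima\tilde f\cong\Ima\tilde g\subseteq\mathcal Q$. Setting $P':=\Coker[g\colon F\to Q]$, which is finitely presented because both $F$ and $Q$ are, and using that the cokernel of $\tilde g$ is the quasi-coherent module $\mathcal P'$, I obtain an exact sequence $0\to\tilde N\to\mathcal Q\to\mathcal P'\to 0$. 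Proposition \ref{4.75} then concludes that $\tilde N$ is an FP-functor.

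For ($\Leftarrow$), I would take an arbitrary morphism $f\colon F\to M$ with $F$ finite free, set $N:=f(F)$, and invoke the hypothesis together with the observation to see that $\Ima\tilde f=\tilde N$ is an FP-functor. Proposition \ref{4.75} then supplies an exact sequence $0\to\tilde N\to\mathcal P\to\mathcal P'\to 0$ with $P$ finitely presented, and in particular a monomorphism $\tilde N\hookrightarrow\mathcal P$. The composite $\mathcal F\twoheadrightarrow\tilde N\hookrightarrow\mathcal P$ is a morphism of quasi-coherent modules, hence equals $\tilde g$ for a unique $g\colon F\to P$ by the equivalence of categories; taking $Q:=P$, which is finitely presented, the injectivity of $\tilde N\hookrightarrow\mathcal P$ yields $\Ker\tilde g=\Ker[\mathcal F\to\tilde N]=\Ker\tilde f$, which is precisely the Mittag-Leffler condition for $f$.

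The genuinely non-formal input is Proposition \ref{4.75}; everything else is bookkeeping. Accordingly, I expect the only delicate point to be packaging the Mittag-Leffler data into the exact sequence that Proposition \ref{4.75} requires, that is, the identification $\Ima\tilde f=\tilde N$ together with the verification that the cokernel $P'$ (in $\Rightarrow$), respectively the target $P$ (in $\Leftarrow$), is finitely presented. Once these are in place, both directions close immediately.
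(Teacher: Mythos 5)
Your proof is correct and is essentially the paper's own argument: the forward direction is identical (surject a finite free module onto $N$, invoke the Mittag-Leffler datum $g\colon F\to Q$, observe that $\Coker \tilde g$ is the quasi-coherent module of the finitely presented module $\Coker g$, and apply Proposition \ref{4.75}). In the converse the paper writes $M=\ilim{i} P_i$ and uses the FP-property directly to factor the inclusion $\Ima\tilde f\hookrightarrow\mathcal M$ through a monomorphism into some $\mathcal P_i$, while you obtain the same monomorphism by citing the forward implication of Proposition \ref{4.75}, whose proof is exactly that factorization, so the difference is only bookkeeping.
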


\begin{proof} $\Rightarrow)$ Let $N\subseteq M$ be a finitely
generated submodule. Let $F$ be a  finite free module
and  an epimorphism $F\to N$. Let $f$ be the composite morphism
$F\to N\to M$. Let $Q$ be a finitely presented module and $g\colon F\to Q$ a morphism of $R$-modules, such that the images of the associated morphisms $\tilde f\colon \mathcal F\to \mathcal M$ and $\tilde g\colon \mathcal F\to \mathcal Q$ are equal. Observe that $\Coker\tilde g$ is equal to the quasi-coherent functor of modules associated with 
$\Coker g$, which is a finitely presented module. By \ref{4.75}, 
$ \Ima\tilde g$ is an FP-functor. Then, $$\Ima[\mathcal N\to\mathcal M]=\Ima \tilde f=\Ima \tilde g$$ is an FP-functor.

$\Leftarrow)$ Let $F$ be a finite free module, $f\colon F\to M$  a morphism of $R$-modules and $N:=\Ima f\subseteq M$. Consider the associated morphism
$\tilde f\colon \mathcal F\to \mathcal M$. Obviously $\Ima\tilde f$ is equal to the image of the morphism $\mathcal N\to\mathcal M$. Then, $\Ima \tilde f$ is an FP-functor.

Write $M=\ilim{i} P_i$, where $P_i$ is a finitely presented $R$-module, for every $i$. Then,
$$\Hom_{\mathcal R}(\Ima \tilde f, \mathcal M)=\Hom_{\mathcal R}(\Ima \tilde f,\ilim{i} \mathcal P_i)=\ilim{i}\Hom_{\mathcal R}(\Ima \tilde f, \mathcal P_i)$$
Hence, there exist an $i$ and a morphism $\Ima \tilde f\to \mathcal P_i$ such that the  composite morphism $\Ima \tilde f\to \mathcal P_i\to \mathcal M$ is the inclusion morphism. Hence, the morphism $\Ima \tilde f\to \mathcal P_i$ is a monomorphism and we have the commutative diagram 
$$\xymatrix{& & \mathcal M\\ \mathcal F  \ar[rru]^-{\tilde f} \ar[r] \ar[rrd]  & \Ima \tilde f \ar@{^{(}->}[ru]  \ar@{^{(}->}[rd] & \\ & & \mathcal P_i}$$
Hence, $M$ is a Mittag-Leffler module.

\end{proof}

\begin{theorem} Let $M$ be an $R$-module. $M$ is a Mittag-Leffler module iff $\mathcal M$ is a direct limit of FP-functors of $\mathcal R$-submodules.\end{theorem}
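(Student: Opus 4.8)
The plan is to prove both implications by leveraging Theorem \ref{4.6}, which already characterizes Mittag-Leffler modules via the FP-functor property of the images $\tilde N = \Ima[\mathcal N \to \mathcal M]$ of finitely generated submodules $N \subseteq M$. The key structural fact I would rely on is the observation (stated just before Theorem \ref{4.6}) that if $\{N_i\}$ is the directed set of all finitely generated submodules of $M$, then $\mathcal M = \ilim{i} \tilde N_i$, where each $\tilde N_i \subseteq \mathcal M$ is an $\mathcal R$-submodule. This expresses $\mathcal M$ as a direct limit of submodules in a canonical way, so the two conditions should match up almost definitionally once the FP-functor property is tracked through.

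For the forward implication, I would assume $M$ is Mittag-Leffler. By Theorem \ref{4.6}, each $\tilde N_i$ is an FP-functor of $\mathcal R$-modules. Since $\mathcal M = \ilim{i} \tilde N_i$ and each $\tilde N_i$ is an $\mathcal R$-submodule of $\mathcal M$, this exhibits $\mathcal M$ directly as a direct limit of FP-functors of $\mathcal R$-submodules, which is exactly the desired conclusion. This direction is essentially immediate from Theorem \ref{4.6} combined with the preparatory remark.

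For the converse, I would assume $\mathcal M = \ilim{j} \mathbb F_j$ where each $\mathbb F_j$ is an FP-functor and an $\mathcal R$-submodule of $\mathcal M$. I must show that every $\tilde N_i$ (for $N_i \subseteq M$ finitely generated) is an FP-functor, so that Theorem \ref{4.6} applies. The strategy is: since $N_i$ is finitely generated, pick a finite free module $F$ with an epimorphism $F \to N_i$ and let $\tilde f\colon \mathcal F \to \mathcal M$ be the associated morphism, so $\tilde N_i = \Ima \tilde f$. Because $\mathcal F$ is a finitely presented quasi-coherent module, it is an FP-functor (Proposition \ref{1.9}), so $\Hom_{\mathcal R}(\mathcal F, \ilim{j}\mathbb F_j) = \ilim{j}\Hom_{\mathcal R}(\mathcal F,\mathbb F_j)$; hence the morphism $\tilde f$ factors through some $\mathbb F_j$. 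I would then argue that $\tilde N_i = \Ima \tilde f$ is a subobject of the FP-functor $\mathbb F_j$ sitting inside $\mathcal M$, and use Proposition \ref{4.75} to realize $\tilde N_i$ as the kernel-type FP-functor it must be, obtaining an exact sequence $0 \to \tilde N_i \to \mathcal P \to \mathcal P' \to 0$ with $P, P'$ finitely presented.

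The main obstacle will be the converse direction, specifically controlling the image $\tilde N_i$ once $\tilde f$ has been factored through $\mathbb F_j$: knowing $\mathbb F_j$ is an FP-functor does not immediately give that its subobject $\Ima \tilde f$ is one, since FP-functors are not obviously closed under passing to $\mathcal R$-submodules. The resolution I expect is to push $\tilde f$ further, through a finitely presented quasi-coherent $\mathcal P$: because $\mathbb F_j$ is an FP-functor with $\mathbb F_j \hookrightarrow \mathcal M = \ilim{k}\mathcal P_k$ a direct limit of finitely presented quasi-coherent modules, Proposition \ref{4.75} supplies an embedding $\tilde N_i \hookrightarrow \mathcal P_k$ with finitely presented cokernel, and the FP-functor property of $\tilde N_i$ then follows exactly as in the $\Leftarrow$ argument of Proposition \ref{4.75}. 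The care needed is in checking that the image of $\mathcal F$ inside $\mathbb F_j$ coincides with $\tilde N_i$ as a subobject of $\mathcal M$ and that the factorizations are compatible, after which Theorem \ref{4.6} closes the argument.
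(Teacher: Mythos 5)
Your proposal is correct in substance and runs close to the paper's own proof, with one citation that must be repaired to avoid circularity. The forward direction is exactly the paper's: Theorem \ref{4.6} gives that each $\tilde N_i$ is an FP-functor, and $\mathcal M=\ilim{i}\tilde N_i$; note that the paper proves this last equality inside this very theorem (the composite $\mathcal M=\ilim{i}\mathcal N_i\to \ilim{i}\tilde N_i\hookrightarrow \mathcal M$ is the identity and the right-hand map is a monomorphism, hence both are isomorphisms), so you should include that line rather than treat the identity as given. In the converse, you and the paper perform the same two factorizations: $\mathcal F\to\mathcal M=\ilim{j}\mathbb F_j$ factors through some $\mathbb F_j$ by Proposition \ref{1.9}, and the inclusion $\mathbb F_j\hookrightarrow \mathcal M=\ilim{k}\mathcal P_k$ factors through a (necessarily mono) morphism $\mathbb F_j\to \mathcal P_k$ by the FP-property of $\mathbb F_j$. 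At that point the paper simply stops: the kernel of $\mathcal F\to\mathcal M$ equals the kernel of $\mathcal F\to\mathcal P_k$, which is the definition of Mittag-Leffler. You instead re-enter the machinery, proving that $\tilde N_i$ is an FP-functor and then invoking Theorem \ref{4.6}; this works, but your appeal to Proposition \ref{4.75} for ``an embedding $\tilde N_i\hookrightarrow\mathcal P_k$ with finitely presented cokernel'' is circular as written: the $\Rightarrow$ direction of \ref{4.75} assumes $\tilde N_i$ is an FP-functor (the very thing you are proving), and its proof deduces finite presentation of the cokernel from that hypothesis via \ref{4.8} and \ref{4.4}. The repair is easy and does not need \ref{4.75}'s $\Rightarrow$ at all: the embedding comes from your own factorization $\tilde N_i\subseteq \mathbb F_j\hookrightarrow\mathcal P_k$, and since $\mathcal N_i\to\tilde N_i$ is an epimorphism, $\Coker[\tilde N_i\to\mathcal P_k]=\Coker[\mathcal N_i\to\mathcal P_k]$ is the quasi-coherent module associated with $P_k/\Ima[N_i\to P_k]$, which is finitely presented because $P_k$ is finitely presented and $N_i$ is finitely generated. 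With the exact sequence $0\to\tilde N_i\to\mathcal P_k\to\mathcal Q\to 0$ so obtained, the $\Leftarrow$ direction of Proposition \ref{4.75} applies and Theorem \ref{4.6} finishes. Net comparison: the paper's ending is shorter because the Mittag-Leffler condition is checked directly on the free cover, while yours costs the extra cokernel computation but makes explicit the stronger statement that every $\tilde N_i$ is an FP-functor under the mere hypothesis that $\mathcal M$ is a direct limit of FP-subfunctors.
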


\begin{proof} $\Leftarrow)$ Let $\mathcal M$ be the direct limit
of a direct system of FP-functors of submodules $\{\mathbb P_j\} $. 
Let $L=R^n$ be a finite free module. A morphism $f\colon \mathcal L\to \mathcal M=\ilim{i}\mathbb P_i$, factors through a morphism
$g\colon \mathcal L\to \mathbb P_j$, by Proposition \ref{1.9}.
Any $R$-module is a direct limit of finitely presented modules, write $M=\ilim{i} P_i$, where $P_i$ is a finitely presented $R$-module, for every  $i$. Then, each canonical morphism $\mathbb P_j\to \mathcal M=\ilim{i}\mathcal P_i$, factors through a morphism
$\mathbb P_j\to \mathcal P_i$ for some $i$. Hence, we have a commutative diagram
$$\xymatrix{\mathcal L \ar[r] & \Ima f \ar@{^{(}->}[r] \ar@{^{(}->}[rd] & \mathcal M & \\ &  & \mathbb P_j \ar[r] \ar@{^{(}->}[u] & \mathcal P_i \ar[ul]}$$
Then, $M$ is a Mittag-Leffler module.

$\Rightarrow)$ Let $\{M_i\}$ be the set of all the finitely generated submodules of $M$. Let ${\tilde M_i}$ the image of the morphism $\mathcal M_i\to\mathcal M$. By Theorem \ref{4.6}, ${\tilde M_i}$ is an FP-functor for any $i$. We have the morphisms $\mathcal M_i \to {\tilde M_i}\hookrightarrow \mathcal M$. Taking direct limits we have
$$\xymatrix{\mathcal M= \ilim{i} {\mathcal M_i}  \ar[r] \ar@/^4mm/[rr]^-{Id} & \ilim{i} {\tilde M_i} \ar@{^{(}->}[r] & \mathcal M}$$
Then, $ \ilim{i} {\tilde M_i}=\mathcal M$.

\end{proof}

%
%
%

\begin{corollary} \cite[Tag 059H]{stacks-project} \label{CML} Let $M$ be an $R$-module. $M$ is a Mittag-Leffler module iff the natural morphism $M\otimes_R\prod_{i\in I} Q_i\to \prod_{i\in I} (M\otimes_R Q_i)$ is injective, for every set of $R$-modules $\{Q_i\}_{i\in I}$.

\end{corollary}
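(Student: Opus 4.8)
The plan is to reduce both implications to two elementary and well-known facts about tensoring with products, together with the FP-functor characterizations already proved. The facts are: if $P$ is finite free (e.g. $P=R^n$) or, more generally, finitely presented, then the natural map $P\otimes_R\prod_i Q_i\to\prod_i(P\otimes_R Q_i)$ is an isomorphism. The first of these lets one assemble elements coming from a product of modules, and the second lets one read off components after tensoring with a finitely presented module; these are exactly the two leverage points I will use. On the structural side I will invoke Theorem \ref{4.6} ($M$ is Mittag-Leffler iff $\tilde N$ is an FP-functor for every finitely generated submodule $N\subseteq M$) and Proposition \ref{4.75} (the exact-sequence criterion for $\tilde N$ to be an FP-functor).

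For the direction $\Rightarrow$, suppose $M$ is Mittag-Leffler and take $x\in M\otimes_R\prod_i Q_i$ mapping to $0$ in $\prod_i(M\otimes_R Q_i)$. I would write $x=\sum_{k=1}^n m_k\otimes q^{(k)}$, set $F=R^n$ and $f\colon F\to M$, $e_k\mapsto m_k$, so that $x=(f\otimes 1)(x_F)$ with $x_F=\sum_k e_k\otimes q^{(k)}\in F\otimes_R\prod_i Q_i$. The Mittag-Leffler hypothesis provides a finitely presented $Q$ and $g\colon F\to Q$ with $\Ker[F\otimes_R N'\to M\otimes_R N']=\Ker[F\otimes_R N'\to Q\otimes_R N']$ for every $N'$. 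Using that $F$ is finite free, the $i$-th component of $x_F$ lies in $F\otimes_R Q_i$ and, since the corresponding component of $x$ vanishes, it lies in the common kernel for $N'=Q_i$, hence maps to $0$ in $Q\otimes_R Q_i$. As $Q$ is finitely presented, $(g\otimes 1)(x_F)\in Q\otimes_R\prod_i Q_i=\prod_i(Q\otimes_R Q_i)$ has all components zero, so it is $0$; applying the kernel equality with $N'=\prod_i Q_i$ then gives $x=(f\otimes 1)(x_F)=0$. (Alternatively, one can deduce $\Rightarrow$ from the preceding theorem: writing $\mathcal M=\ilim{j}\tilde N_j$ with each $\tilde N_j$ an FP-functor, one uses Proposition \ref{4.75} to write $\bar{\tilde N_j}(N')=\Ker[P\otimes_R N'\to P'\otimes_R N']$ for finitely presented $P,P'$, whence $\bar{\tilde N_j}$ commutes with products, and passes to the filtered union.)

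For the direction $\Leftarrow$, I assume the product maps are all injective and, by Theorem \ref{4.6}, reduce to proving that $\tilde N$ is an FP-functor for a fixed finitely generated $N\subseteq M$. Choose $F=R^n$ and $f\colon F\to M$ with image $N$, write $M=\ilim{\lambda} M_\lambda$ with the $M_\lambda$ finitely presented and $f$ factoring through $f_\lambda\colon F\to M_\lambda$ for $\lambda\ge\lambda_0$, and set $K_\lambda(N'):=\Ker[F\otimes_R N'\to M_\lambda\otimes_R N']$ and $K(N'):=\Ker[F\otimes_R N'\to M\otimes_R N']$, an increasing system with $K(N')=\bigcup_\lambda K_\lambda(N')$. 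The key reduction is that it suffices to find a single $\lambda$ with $K_\lambda(N')=K(N')$ for all $N'$: then $\Ker[\mathcal F\to\mathcal M_\lambda]=\Ker[\mathcal F\to\mathcal M]$ as functors, so $\tilde N=\Ima[\mathcal F\to\mathcal M_\lambda]$ embeds in $\mathcal M_\lambda$ and sits in an exact sequence $0\to\tilde N\to\mathcal M_\lambda\to \mathcal{(M_\lambda/N_\lambda)}\to 0$ with $M_\lambda$ and $M_\lambda/N_\lambda$ finitely presented, so $\tilde N$ is an FP-functor by Proposition \ref{4.75}.

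The crux is to produce such a $\lambda$, and this is where I expect the main obstacle. I would argue by contradiction: if no such $\lambda$ exists, then for each $\lambda\ge\lambda_0$ I can pick a module $N'_\lambda$ and $z_\lambda\in K(N'_\lambda)\setminus K_\lambda(N'_\lambda)$. Because $F$ is finite free, $F\otimes_R\prod_\lambda N'_\lambda=\prod_\lambda(F\otimes_R N'_\lambda)$, so $z:=(z_\lambda)_\lambda$ is a genuine element of $F\otimes_R\prod_\lambda N'_\lambda$ all of whose components die in $M\otimes_R N'_\lambda$; hence the image of $z$ in $\prod_\lambda(M\otimes_R N'_\lambda)$ is $0$, and the injectivity hypothesis applied to $\{N'_\lambda\}$ forces $(f\otimes 1)(z)=0$, i.e. $z\in\bigcup_\mu K_\mu(\prod_\lambda N'_\lambda)$. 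Picking $\mu$ with $z\in K_\mu(\prod_\lambda N'_\lambda)$ and using that $M_\mu$ is finitely presented, so $M_\mu\otimes_R\prod_\lambda N'_\lambda=\prod_\lambda(M_\mu\otimes_R N'_\lambda)$, I read off the $\mu$-th component to get $z_\mu\in K_\mu(N'_\mu)$, contradicting the choice of $z_\mu$. The delicate point is precisely this final step: one must package the failure of uniform stabilization into a single product family and exploit simultaneously that $F$ is finite free (to assemble the witness $z$) and that each $M_\mu$ is finitely presented (to recover components at the index $\mu$), the diagonal choice $\lambda=\mu$ being what creates the contradiction. Everything else is formal once the two product-commutation facts and the characterizations \ref{4.6} and \ref{4.75} are in hand.
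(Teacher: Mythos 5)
Your proof is correct, but it follows a genuinely different route from the paper's. Your argument is the classical element-level one (essentially the proof of the cited Stacks Project result, Tag 059H): for $\Rightarrow$ you chase an element of $\Ker\bigl[M\otimes_R\prod_i Q_i\to \prod_i(M\otimes_R Q_i)\bigr]$ through a finite free cover $F=R^n$ and a dominating finitely presented $Q$, using only the two standard facts that $-\otimes_R\prod_i Q_i$ commutes with finite free (resp.\ finitely presented) modules; for $\Leftarrow$ you run the diagonal contradiction over a filtered presentation $M=\ilim{\lambda}M_\lambda$, assembling one witness $z_\lambda$ per index into a single product family and reading off the $\mu$-th component via finite presentation of $M_\mu$. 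The paper instead stays entirely inside its functorial framework: for $\Rightarrow$ it identifies $M\otimes_R\prod_iQ_i=\Hom_{\mathcal R}((\oplus_i\mathcal Q_i^*)_{sch},\mathcal M)$ and $\prod_i(M\otimes_RQ_i)=\Hom_{\mathcal R}(\oplus_i\mathcal Q_i^*,\mathcal M)$ (Propositions \ref{1211} and \ref{prop4}, Lemma \ref{4.5}), writes $\mathcal M=\ilim{j}\tilde M_j$ as a limit of FP-subfunctors, and deduces injectivity from the interchange $\ilim{j}\prod_i\hookrightarrow\prod_i\ilim{j}$ together with Theorem \ref{L5.11}; for $\Leftarrow$ it uses Theorem \ref{4.6N} to produce an epimorphism $\oplus_i\mathcal Q_i^*\to\Ker f$ and the injectivity hypothesis to show the induced map $(\oplus_i\mathcal Q_i^*)_{sch}\to\mathcal M$ vanishes, so that $\Ima f$ is a cokernel of FP-functors. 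Your approach buys elementariness and self-containment — in fact your detour through Theorem \ref{4.6} and Proposition \ref{4.75} in the $\Leftarrow$ direction is unnecessary, since the uniform kernel equality $K_\lambda(N')=K(N')$ for all $N'$ that you establish is \emph{literally} the paper's definition of Mittag-Leffler (with $Q=M_\lambda$, $g=f_\lambda$); the paper's approach buys conceptual economy within its own theory, showing the classical criterion drop out of the functorial characterizations without ever choosing elements. One small point worth noting in your argument (shared by the Stacks Project proof): picking $N'_\lambda$ for each $\lambda$ is a choice from a proper class of witnesses, a standard set-theoretic convention rather than a genuine gap.
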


\begin{proof} $\Rightarrow)$ Let $\{M_j\}$ be the set of the submodules of $M$ and $\bar {\mathcal M_j}$ the image of the morphism $\mathcal M_j\to\mathcal M$. Then, $\mathcal M=\ilim{j}{\tilde M_j}$. Then,
$$\aligned  & M\otimes_R \prod_i Q_i  \overset{\text{\ref{1211}}}=\Hom_{\mathcal R}((\oplus_i \mathcal Q_i^*)_{sch},\mathcal M)\overset{\text{\ref{L5.11}}}=\ilim{j} \Hom_{\mathcal R}((\oplus_i \mathcal Q_i^*)_{sch},{\tilde M_j})\\ & \overset{\text{\ref{4.5}}}=\ilim{j} \Hom_{\mathcal R}(\oplus_i \mathcal Q_i^*,{\tilde M_j})=\ilim{j} \prod_i\Hom_{\mathcal R}(\mathcal Q_i^*,{\tilde M_j})\hookrightarrow
\prod_i\ilim{j}\Hom_{\mathcal R}(\mathcal Q_i^*,{\tilde M_j})
\\ & \overset{\text{\ref{L5.11}}}=\prod_i \Hom_{\mathcal R}(\mathcal Q_i^*,\mathcal M)=\prod_i(M\otimes_R Q_i)\endaligned$$

%
$\Leftarrow)$ Let $L$ be a finitely generated $R$-module and $L\to M$ a morphism of $R$-modules. By Theorem \ref{4.6}, we have to prove that the image of the associated morphism $f\colon \mathcal L\to \mathcal M$ is an FP-functor. Let $L'$ be a finite free $R$-module and
$L'\to L$ an epimorphism. Obviously, the image of the composite morphism $\mathcal L'\to \mathcal L\to \mathcal M$ is equal to the image of the morphism $\mathcal L\to \mathcal M$. Then, we can suppose that $L$ is a finite free module.

Consider the dual morphism $f^*\colon \mathcal M^*\to \mathcal L^*$. $\Coker f^*$ is an FP-functor, by \ref{4.8},  and $\Ker f=(\Coker f^*)^*$. 
By Theorem \ref{4.6N}, there exists an epimorphism 
$\pi\colon \oplus_i\mathcal Q_i^*\to \Ker f$.
Consider the commutative diagram
$$\xymatrix{\oplus_i\mathcal Q_i^* \ar[r] \ar[dr] & \mathcal L \ar[r] & \Ima f \ar@{^{(}->}[r] & \mathcal M\\ & (\oplus_i\mathcal Q_i^* )_{sch} \ar[u] & & }$$

$\Hom_{\mathcal R}((\oplus_i \mathcal Q_i^*)_{sch},\mathcal M) \overset{\text{\ref{1211}}}=M\otimes_R \prod_i Q_i\hookrightarrow\prod_i(M\otimes_R Q_i)=\Hom_{\mathcal R}(\oplus_i\mathcal Q_i^*,\mathcal M)$. Then, the morphism
$(\oplus_i \mathcal Q_i^*)_{sch}\to \mathcal M$ is zero and
$$\Coker[(\oplus_i \mathcal Q_i^*)_{sch}\to \mathcal L]=\Ima f$$
$(\oplus_i \mathcal Q_i^*)_{sch}$ and $\mathcal L$ are FP-functors, then $\Ima f$ is an FP-functor by \ref{4.8} .

\end{proof}

\begin{lemma} \label{epi} Let $M$ be a Mittag-Leffler module, and $N_1\subseteq N_2\subseteq M$ two finitely generated submodules. Let ${\tilde N_1} \subseteq {\tilde N_2}$ be the images of the morphisms  $\mathcal N_1,\mathcal N_2\to \mathcal M$. The dual morphism  
${\tilde N_2}^*\to {\tilde N_1}^*$ is an epimorphism.
\end{lemma}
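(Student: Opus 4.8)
The plan is to translate the statement into an extension property and then realize both image functors inside one and the same finitely presented quasi-coherent module. Since $M$ is Mittag-Leffler and $N_1,N_2$ are finitely generated, Theorem \ref{4.6} already guarantees that $\tilde N_1$ and $\tilde N_2$ are FP-functors. The morphism $\tilde N_2^*\to \tilde N_1^*$ in question is the dual of the inclusion $\iota\colon \tilde N_1\hookrightarrow \tilde N_2$, so it is a morphism of dual functors. By the exactness criterion for dual functors stated in the preliminaries just before Lemma \ref{main} (applied with third term $0$), it is an epimorphism if and only if $\overline{\tilde N_2^*}(N)\to \overline{\tilde N_1^*}(N)$ is surjective for every $R$-module $N$. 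Recalling Notation \ref{notation}, this map is precisely the restriction map $\Hom_{\mathcal R}(\tilde N_2,\mathcal N)\to \Hom_{\mathcal R}(\tilde N_1,\mathcal N)$, $\phi\mapsto \phi\circ\iota$. Hence it suffices to prove that every morphism $g\colon \tilde N_1\to\mathcal N$ extends to a morphism $\tilde N_2\to\mathcal N$.

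Next I would produce the common ambient module. Applying Proposition \ref{4.75} to $N_2\subseteq M$, there is a finitely presented module $P$ and an exact sequence $0\to \tilde N_2\to \mathcal P\to \mathcal P'\to 0$; in particular $\tilde N_2\hookrightarrow \mathcal P$, whence $\tilde N_1\subseteq \tilde N_2\subseteq \mathcal P$. Put $\mathcal Q_1:=\Coker[\tilde N_1\hookrightarrow \mathcal P]$. Since the epimorphism $\mathcal N_1\to \tilde N_1$ does not alter the image in $\mathcal P$, we have $\mathcal Q_1=\Coker[\mathcal N_1\to \mathcal P]$, which is quasi-coherent, being the cokernel of a morphism of quasi-coherent modules; moreover it is an FP-functor by Theorem \ref{4.8} (a cokernel of FP-functors), so $Q_1$ is a finitely presented $R$-module by Proposition \ref{4.4}. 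Thus $0\to \tilde N_1\to \mathcal P\to \mathcal Q_1\to 0$ is exact, exhibiting $\tilde N_1=\Ker[\mathcal P\to \mathcal Q_1]$ with $P$ and $Q_1$ finitely presented and the induced map $P\to Q_1$ an epimorphism.

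Finally I would invoke Proposition \ref{3.6} for this last sequence, which gives that $\Hom_{\mathcal R}(\mathcal P,\mathcal N)\to \Hom_{\mathcal R}(\tilde N_1,\mathcal N)$ is surjective for every $R$-module $N$. Therefore a given $g\colon \tilde N_1\to\mathcal N$ lifts to some $\tilde g\colon \mathcal P\to\mathcal N$ with $\tilde g_{|\tilde N_1}=g$. Restricting $\tilde g$ to the intermediate subfunctor $\tilde N_2$ produces $h:=\tilde g_{|\tilde N_2}\colon \tilde N_2\to\mathcal N$, and since $\tilde N_1\subseteq \tilde N_2\subseteq \mathcal P$ we get $h\circ\iota=\tilde g_{|\tilde N_1}=g$. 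This is the required extension, so $\tilde N_2^*\to \tilde N_1^*$ is an epimorphism. The only genuinely delicate point is the construction of the common embedding together with the verification that $\mathcal Q_1$ is quasi-coherent and finitely presented, which is exactly what makes Proposition \ref{3.6} applicable to $\tilde N_1$; once the lift of $g$ all the way up to $\mathcal P$ exists, restricting it back down to $\tilde N_2$ is purely formal.
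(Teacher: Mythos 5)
Your proof is correct, and its core is the same as the paper's: realize both $\tilde N_1$ and $\tilde N_2$ inside one finitely presented quasi-coherent module $\mathcal P$ (your appeal to Proposition \ref{4.75} is exactly the paper's factorization of $\tilde N_2\hookrightarrow \mathcal M=\ilim{i}\mathcal P_i$ through some $\mathcal P_i$), check that the cokernel of $\tilde N_1\hookrightarrow\mathcal P$ is quasi-coherent and finitely presented, and then apply Proposition \ref{3.6}. Where you differ is the endgame. The paper applies \ref{3.6} to the inclusion $\tilde N_1\hookrightarrow\mathcal P_i$ to conclude that the dual morphism $\mathcal P_i^*\to \tilde N_1^*$ is an epimorphism of functors; since this morphism factors as $\mathcal P_i^*\to \tilde N_2^*\to \tilde N_1^*$, the second arrow is automatically an epimorphism --- no bar functor, no reduction to Hom groups. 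You instead convert the epimorphism statement into an extension property (every $g\colon \tilde N_1\to\mathcal N$ extends to $\tilde N_2$) via the exactness criterion for dual functors, and establish it by lifting $g$ all the way up to $\mathcal P$ (the ``more generally'' clause of \ref{3.6}) and restricting back down to $\tilde N_2$. Both routes are sound; yours is slightly longer but makes the concrete module-theoretic content of the lemma visible, while the paper's composite-of-duals argument is shorter and stays entirely at the level of dual functors. One phrase in your write-up needs care: when you justify that $\mathcal Q_1$ is an FP-functor as ``a cokernel of FP-functors'', this must refer to $\Coker[\tilde N_1\to\mathcal P]$ (both terms are FP, by Theorem \ref{4.6} and Proposition \ref{4.4}), not to $\Coker[\mathcal N_1\to\mathcal P]$, since $\mathcal N_1$ is generally \emph{not} an FP-functor when $N_1$ is only finitely generated; alternatively, $Q_1=P/\Ima[N_1\to P]$ is finitely presented directly, being a quotient of a finitely presented module by a finitely generated submodule.
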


\begin{proof} $M$ is equal to a direct limit of finitely presented modules, $M=\ilim{i} P_i$. The morphism ${\tilde N_2}\hookrightarrow \mathcal M$ factors through a morphism ${\tilde N_2}\to \mathcal P_i$, which is a monomorphism. 
$\Coker[{\tilde N_2}\to \mathcal P_i]=\Coker[{\mathcal N_2}\to \mathcal P_i]$, which is the quasi-coherent module associated with $P'=\Coker[{N_2}\to P_i]$. We have the exact sequence
$$0\to {\tilde N_2}\to \mathcal P_i\to \mathcal P'\to 0$$
By Proposition \ref{3.6}, the morphism $\mathcal P_i^*\to  {\tilde N_2}^*$ is an epimorphism. Consider the composite morphism
${\tilde N_1}\hookrightarrow {\tilde N_2}\hookrightarrow \mathcal P_i$. $\Coker[{\tilde N_1}\to \mathcal P_i]=\Coker[{\mathcal N_1}\to \mathcal P_i]$, which is the quasi-coherent module associated with $Q'=\Coker[{N_1}\to P_i]$. Again by Proposition \ref{3.6}, the morphism $\mathcal P_i^*\to  {\tilde N_1}^*$ is an epimorphism. Then,  the morphism  
${\tilde N_2}^*\to {\tilde N_1}^*$ is an epimorphism.
\end{proof}

Let  $\{F_i\}$ be a direct system of finitely presented modules, so that $M=\ilim{i} F_i$. $M$ is a strict Mittag-Leffler module iff
for every commutative $R$-algebra $S$ and index $i$ there exists 
$j\geq i$ such that $\Ima[\mathcal M^*(S)\to \mathcal F_i^*(S)]=
\Ima[\mathcal F_j^*(S)\to \mathcal F_i^*(S)]$ (see \cite[II 2.3.2]{RaynaudG}).
 
\begin{theorem} \label{5.6}  Let $M$ be an $R$-module. $M$ is a strict Mittag-Leffler module iff for every finitely generated submodule $N\subseteq M$ the image ${\tilde N}$ of the associated morphism
$\mathcal N\to \mathcal M$ is an FP-functor and the morphism
$\mathcal M^*\to {\tilde N}^*$ is an epimorphism.
\end{theorem}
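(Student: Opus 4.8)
The plan is to prove both implications by reducing to the characterizations already established for Mittag-Leffler modules (Theorem \ref{4.6}) and the epimorphism criterion of Lemma \ref{epi}, using the Raynaud--Gruson reformulation of strictness stated just before the theorem. For the forward implication, I would start by assuming $M$ is a strict Mittag-Leffler module. Since every strict Mittag-Leffler module is in particular a Mittag-Leffler module, Theorem \ref{4.6} immediately gives that $\tilde N$ is an FP-functor for every finitely generated submodule $N\subseteq M$. The remaining task is to show that $\mathcal M^*\to \tilde N^*$ is an epimorphism. Here I would exploit the dual-functor machinery: by applying $\overline{(\,\cdot\,)}$ (Notation \ref{notation}) it suffices to check surjectivity of $\overline{\mathcal M^*}(\tilde N(R))\to\overline{\tilde N^*}(\tilde N(R))$, and more fundamentally to test the epimorphism on module schemes and finite free modules where everything reduces to honest $R$-module maps. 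Writing $M=\ilim{i}F_i$ with $F_i$ finitely presented, the image $\tilde N$ factors through some $\mathcal F_i$ as a monomorphism, so I can compare $\Ima[\mathcal M^*\to \mathcal F_i^*]$ with $\Ima[\mathcal F_j^*\to \mathcal F_i^*]$ and translate the Raynaud--Gruson condition $\Ima[\mathcal M^*(S)\to\mathcal F_i^*(S)]=\Ima[\mathcal F_j^*(S)\to\mathcal F_i^*(S)]$ into the surjectivity of $\mathcal M^*\to\tilde N^*$.

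For the converse, I would assume that every $\tilde N$ is an FP-functor and that $\mathcal M^*\to\tilde N^*$ is an epimorphism, and deduce strictness. The FP-functor hypothesis alone already yields, via Theorem \ref{4.6}, that $M$ is a Mittag-Leffler module, so it only remains to upgrade this to strictness by verifying the Raynaud--Gruson condition. Given $F_i$ and a commutative $R$-algebra $S$, I would take $N$ to be the image of $F_i\to M$ (a finitely generated submodule), so that $\tilde N=\Ima[\mathcal F_i\to\mathcal M]$. The surjectivity of $\mathcal M^*\to\tilde N^*$, evaluated at $S$ and combined with the factorization $\tilde N\hookrightarrow\mathcal F_j$ coming from $M=\ilim{j}F_j$ and Proposition \ref{1.9}, should give that every element of $\Ima[\mathcal M^*(S)\to\mathcal F_i^*(S)]$ already comes from $\mathcal F_j^*(S)$ for a suitable $j\geq i$, which is exactly the required equality of images.

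The technical heart of both directions is the dictionary between the epimorphism $\mathcal M^*\to\tilde N^*$ of dual functors and the equality of images in the Raynaud--Gruson criterion, mediated by Proposition \ref{3.6}, which controls the dual exact sequence $0\to\mathcal P^*\to\mathcal F_i^*\to(\Ker)^*\to 0$ whenever $\tilde N$ sits inside a finitely presented $\mathcal F_i$ with finitely presented cokernel. Lemma \ref{epi} is the key intermediate fact: for nested finitely generated submodules $N_1\subseteq N_2\subseteq M$ of a Mittag-Leffler module, the transition map $\tilde N_2^*\to\tilde N_1^*$ is already an epimorphism, and I expect to combine this with the hypothesis $\mathcal M^*\to\tilde N^*$ to pass between the fixed target $\mathcal M^*$ and the cofinal system of the $\tilde N_i^*$.

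The step I expect to be the main obstacle is the careful identification, on the level of $S$-points, of the image of $\mathcal M^*(S)$ in $\tilde N^*(S)\cong\mathcal F_i^*(S)$ with the image coming from a single large enough $\mathcal F_j^*(S)$; the subtlety is that surjectivity of the functor morphism $\mathcal M^*\to\tilde N^*$ must be reconciled with the existence of a \emph{uniform} index $j$ (depending on $S$ and $i$) realizing the image, which is where the finite generation of $N$ and the factorization of $\tilde N$ through some $\mathcal F_j$ must be used to ensure the index can be chosen. Handling the passage through $\ilim{}$ and the behavior of images under the dualization $(\,\cdot\,)^*$, together with keeping track of which functors are reflexive (via Proposition \ref{3.6}, $\Ker f$ being reflexive), is where the argument requires the most care.
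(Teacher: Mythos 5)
Your plan is essentially the paper's own proof: both directions reduce to the Raynaud--Gruson criterion at each $S$-point via the factorizations ${\tilde N}\hookrightarrow\mathcal F_i$ and $\mathcal F_j\to\mathcal M$ through some ${\tilde N_k}$, with Theorem \ref{4.6} supplying the FP property in the forward direction and Lemma \ref{epi} (applied to the inclusion ${\tilde N}\subseteq{\tilde N_k}$) closing the image chase exactly as you anticipate. The only flaw is your throwaway claim that the epimorphism $\mathcal M^*\to{\tilde N}^*$ can be tested via $\overline{(\,\cdot\,)}$ on the single module ${\tilde N}(R)$ --- an epimorphism of dual functors must be checked at every algebra $S$ (equivalently, on all modules) --- but your actual argument works pointwise in $S$ and never uses that reduction, so nothing is lost.
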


\begin{proof} Let  $\{F_i\}$ be a direct system of finitely presented $R$-modules, so that $M=\ilim{i} F_i$. Let $\{N_k\}$ be the set of the finitely generated submodules of $M$, and let ${\tilde N_k}$ be the image of the morphism $\mathcal N_k\to \mathcal M$. Then, $\mathcal M=\ilim{k} {\tilde N_k}$.

$\Rightarrow)$ 
Let $S$ be a commutative $R$-algebra. We have to prove that the morphism 
$\mathcal M^*(S)\to {\tilde N}^*(S)$ is an epimorphism.
The morphism ${\tilde N}\hookrightarrow \mathcal M$ factors through some morphism $\mathcal F_i\to \mathcal M$, because ${\tilde N}$ is an FP-functor ($M$ is a Mittag-Leffler module).
There exists 
$j\geq i$ such that $\Ima[\mathcal M^*(S)\to \mathcal F_i^*(S)]=
\Ima[\mathcal F_j^*(S)\to \mathcal F_i^*(S)]$. 
 $\mathcal M=\ilim{k} {\tilde N_k}$. The morphism $\mathcal F_j\to\mathcal M$ factors through  some morphism ${\tilde N_k}\to \mathcal M$, by Proposition \ref{1.9}.
We have the morphisms
$$\mathcal M^*(S) \to {\tilde N_k}^*(S)\to \mathcal F_j^*(S)\to \mathcal F_i^*(S)\to{\tilde N}^*(S)$$

Then,
$$\aligned 
\Ima[\mathcal M^*(S)\to \mathcal F_i^*(S)]&\subseteq\Ima[{\tilde N_k}^*(S)\to \mathcal F_i^*(S)]\subseteq
\Ima[\mathcal F_j^*(S)\to \mathcal F_i^*(S)]\\ &=\Ima[\mathcal M^*(S)\to \mathcal F_i^*(S)]\endaligned$$
Hence, $\Ima[{\tilde N_k}^*(S)\to \mathcal F_i^*(S)]=
\Ima[\mathcal M^*(S)\to \mathcal F_i^*(S)]$ and
$$\Ima[\mathcal M^*(S)\to {\tilde N}^*(S)]=\Ima[{\tilde N_k}^*(S)\to {\tilde N}^*(S)]\overset{\text{\ref{epi}}}={\tilde N}^*(S).$$

$\Leftarrow)$ Consider a commutative $R$-algebra $S$ and an index $i$. The morphism $\mathcal F_i\to \mathcal M$ factors through  some  morphism ${\tilde N_k}\to\mathcal M$, by Proposition \ref{1.9}. The morphism  ${\tilde N_k}\to\mathcal M$ factors through some morphism  $\mathcal F_j\to \mathcal M$ (because ${\tilde N_k}$ is an FP-functor). We have the morphisms
$$\mathcal M^*(S)\to \mathcal F_j^*(S)\to {\tilde N_k}^*(S)\to \mathcal F_i^*(S)$$
From the commutative diagram
$$\xymatrix{\Ima[\mathcal M^*(S)\to \mathcal F_i^*(S)] \ar@{=}[rr] \ar@{^{(}->}[rd] &  & \Ima[{\tilde N_k}^*(S)\to \mathcal F_i^*(S)] \\ & \Ima[\mathcal F_j^*(S)\to \mathcal F_i^*(S)] \ar@{^{(}->}[ru]&}$$
we have that $\Ima[\mathcal M^*(S)\to \mathcal F_i^*(S)]=
\Ima[\mathcal F_j^*(S)\to \mathcal F_i^*(S)] $.
\end{proof}

\begin{corollary} $M$ is a strict Mittag-Leffler module iff $\mathcal M$ is an $\mathcal R$-submodule 
of some $\mathcal R$-module $\prod_r \mathcal P_r$, where $P_r$ is a finitely presented modules, for every $r$. 

\end{corollary}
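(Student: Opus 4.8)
The plan is to deduce the corollary from Theorem~\ref{5.6}, whose criterion is: $M$ is strict Mittag-Leffler iff for every finitely generated submodule $N\subseteq M$ the image $\tilde N$ of $\mathcal N\to\mathcal M$ is an FP-functor and $\mathcal M^*\to\tilde N^*$ is an epimorphism. Throughout I will use the characterization, recorded just after Proposition~\ref{refpro}, that a sequence of dual functors is exact iff it becomes exact after applying $\overline{(\,\cdot\,)}$ to every $R$-module, together with the identifications $\overline{\mathcal M^*}(Q)=\Hom_{\mathcal R}(\mathcal M,\mathcal Q)$ and $\overline{\tilde N^*}(Q)=\Hom_{\mathcal R}(\tilde N,\mathcal Q)$ (the latter via Proposition~\ref{trivial}), under which the dual map $\mathcal M^*\to\tilde N^*$ becomes restriction of morphisms along $\tilde N\hookrightarrow\mathcal M$.

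For the implication ($\Rightarrow$), assume $M$ is strict Mittag-Leffler. For each finitely generated $N\subseteq M$, Theorem~\ref{5.6} gives that $\tilde N$ is an FP-functor, so by Proposition~\ref{4.75} there is a finitely presented module $P_N$ with a monomorphism $j_N\colon\tilde N\hookrightarrow\mathcal P_N$. Theorem~\ref{5.6} also gives that $\mathcal M^*\to\tilde N^*$ is an epimorphism; applying $\overline{(\,\cdot\,)}$ at $Q=P_N$, the restriction map $\Hom_{\mathcal R}(\mathcal M,\mathcal P_N)\to\Hom_{\mathcal R}(\tilde N,\mathcal P_N)$ is surjective, so $j_N$ extends to a morphism $\Psi_N\colon\mathcal M\to\mathcal P_N$ with $\Psi_N|_{\tilde N}=j_N$. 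Form $\Psi=\prod_N\Psi_N\colon\mathcal M\to\prod_N\mathcal P_N$. I claim $\Psi$ is a monomorphism: since $\mathcal M=\ilim{N}\tilde N$, any $m\in\mathcal M(S)$ is the image of some $m_0\in\tilde N_0(S)$ for a single finitely generated $N_0$, and then $\Psi_{N_0}(m)=j_{N_0}(m_0)$; if $\Psi(m)=0$ then $j_{N_0}(m_0)=0$, whence $m_0=0$ and $m=0$. This exhibits $\mathcal M$ as an $\mathcal R$-submodule of $\prod_N\mathcal P_N$.

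For the implication ($\Leftarrow$), assume $\mathcal M\hookrightarrow\prod_r\mathcal P_r$ with each $P_r$ finitely presented. First I would show $M$ is Mittag-Leffler via Corollary~\ref{CML}: evaluating the monomorphism at $T=\prod_iQ_i$ gives an injection $M\otimes_R\prod_iQ_i\hookrightarrow\prod_r(P_r\otimes_R\prod_iQ_i)$, and since finitely presented modules commute with products this identifies with $\prod_i\prod_r(P_r\otimes_RQ_i)$; a commutative triangle (using naturality of the embedding for the projections $\prod_iQ_i\to Q_i$) factors this injection through $\prod_i(M\otimes_RQ_i)$, forcing $M\otimes_R\prod_iQ_i\to\prod_i(M\otimes_RQ_i)$ to be injective. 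By Corollary~\ref{CML} and Theorem~\ref{4.6}, each $\tilde N$ is an FP-functor. It remains to produce the epimorphism $\mathcal M^*\to\tilde N^*$. Here I would observe that, dualizing $\tilde N\hookrightarrow\mathcal M\hookrightarrow\prod_r\mathcal P_r$, the dual map $\oplus_r\mathcal P_r^*=(\prod_r\mathcal P_r)^*\to\tilde N^*$ (Proposition~\ref{refpro}) factors through $\mathcal M^*\to\tilde N^*$; hence it suffices to prove $\oplus_r\mathcal P_r^*\to\tilde N^*$ is an epimorphism. Applying $\overline{(\,\cdot\,)}$ and noting that both $\overline{\oplus_r\mathcal P_r^*}=\oplus_r\overline{\mathcal P_r^*}$ and $\overline{\tilde N^*}$ commute with direct limits (the latter by Proposition~\ref{4.7}, as $\tilde N$ is an FP-functor), this reduces to surjectivity of $\oplus_r\Hom_R(P_r,Q)\to\Hom_{\mathcal R}(\tilde N,\mathcal Q)$ for \emph{finitely presented} $Q$; and by Proposition~\ref{refpro} one has $\Hom_{\mathcal R}(\prod_r\mathcal P_r,\mathcal Q)=\oplus_r\Hom_R(P_r,Q)$, so this is exactly the assertion that every $g\colon\tilde N\to\mathcal Q$ extends along $\tilde N\hookrightarrow\prod_r\mathcal P_r$.

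The hard part will be this last extension/finiteness statement in ($\Leftarrow$). Unwinding it through Proposition~\ref{3.6} (which writes $g$ as the restriction of some $\bar g\colon\mathcal P\to\mathcal Q$ for a finitely presented $P\supseteq N$) shows it is equivalent to the purely $R$-linear fact that a map $N\to Q$ with $N$ finitely generated and $Q$ finitely presented, extendable to $P$, factors through $N\to\prod_{r\in F}P_r\to Q$ for some \emph{finite} $F$. This is the genuine content: a naive Specker-type obstruction shows it can fail for infinitely generated submodules of $\prod_r P_r$, so the argument must use the finite generation of $N$ together with the purity of $\mathcal M\hookrightarrow\prod_r\mathcal P_r$. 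I expect to handle it exactly as in the proof of Corollary~\ref{CML}, replacing $\oplus_r\mathcal P_r^*$ by its scheme envelope $(\oplus_r\mathcal P_r^*)_{sch}$ and invoking Lemma~\ref{4.5}(1) and Proposition~\ref{1211} to convert the required factorization into the injectivity $M\otimes_R\prod_rP_r\hookrightarrow\prod_r(M\otimes_RP_r)$ already established, thereby closing the reduction.
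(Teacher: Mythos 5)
Your ($\Rightarrow$) direction is correct and is essentially the paper's own argument: the paper writes $M=\ilim{i}P_i$, embeds each ${\tilde P_i}$ into a finitely presented $\mathcal P_{f(i)}$, uses the epimorphism $\mathcal M^*\to{\tilde P_i}^*$ from Theorem \ref{5.6} to extend that embedding to a morphism $s_{f(i)}\colon \mathcal M\to\mathcal P_{f(i)}$, and checks that the resulting map to the product is a monomorphism exactly as you do; indexing over finitely generated submodules rather than over a presentation of $M$ is immaterial.

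The problem is in ($\Leftarrow$). The part you actually prove there --- injectivity of $M\otimes_R\prod_i Q_i\to\prod_i(M\otimes_R Q_i)$ from the embedding, hence Mittag-Leffler by Corollary \ref{CML}, hence each ${\tilde N}$ is an FP-functor --- is sound, and it is in substance the same computation the paper makes (the paper encodes ``finitely presented modules commute with products'' as $\Hom_{\mathcal R}(\oplus_i\mathcal N_i^*,\mathcal P)=\Hom_{\mathcal R}((\oplus_i\mathcal N_i^*)_{sch},\mathcal P)$ and chases the analogous square). But your proof stops being a proof exactly where you say ``the hard part'': the epimorphism $\mathcal M^*\to{\tilde N}^*$ required by Theorem \ref{5.6} is never established, and the closing paragraph is a plan rather than an argument. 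The plan itself has two weaknesses. First, your reduction demands that every $g\colon{\tilde N}\to\mathcal Q$ extend over the whole product $\prod_r\mathcal P_r$; this is a priori strictly stronger than extending over $\mathcal M$, which is all that Theorem \ref{5.6} needs, and nothing in the hypothesis forces an arbitrary embedding to have this property (in the embedding constructed in ($\Rightarrow$) each ${\tilde N}$ lands in a single coordinate, but a given embedding need not look like that). Second, the tools you propose for closing the reduction --- Lemma \ref{4.5}(1), Proposition \ref{1211} and the injectivity already proved --- yield monomorphism/vanishing conclusions, as in the proof of Corollary \ref{CML}; an epimorphism of dual functors is a surjectivity assertion for every algebra, and it cannot be extracted from those injectivity statements by the same diagram chase. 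It is only fair to add that the paper does not prove this epimorphism either: its ($\Leftarrow$) consists of the product-injectivity computation followed by the citation of \ref{CML} to conclude that $M$ is \emph{strict} Mittag-Leffler, although \ref{CML} as stated characterizes plain Mittag-Leffler modules. So your instinct that something beyond \ref{CML} is needed identifies a real thin point of the paper's own argument; but since your proposal does not supply the missing step, the ($\Leftarrow$) direction remains incomplete as written.
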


\begin{proof} $\Leftarrow)$ Let $P$ be a finitely presented $R$-module. Then,
$$\Hom_{\mathcal R}(\oplus_i \mathcal N_i^*,\mathcal P)\overset{\text{\ref{prop4}}}=\prod_i (N_i\otimes P)=(\prod_i N_i)\otimes P\overset{\text{\ref{1211}}}=
\Hom_{\mathcal R}((\oplus_i\mathcal N_i^*)_{sch},P)$$
Hence, $\Hom_{\mathcal R}(\oplus_i \mathcal N_i^*,\prod_r \mathcal P_r)=\Hom_{\mathcal R}((\oplus_i\mathcal N_i^*)_{sch},\prod_r P_r)$. Consider a monomorphism $\mathcal M\hookrightarrow \prod_r\mathcal P_r$ and the commutative diagram 
$$\xymatrix{ \prod_i (N_i\otimes M)=\Hom_{\mathcal R}(\oplus_i \mathcal N_i^*,\mathcal M)
\ar@{^{(}->}[r] \ar[d] & \Hom_{\mathcal R}(\oplus_i \mathcal N_i^*,\prod_r \mathcal P_r) \ar@{=}[d]
\\ (\prod_i N_i)\otimes M=
\Hom_{\mathcal R}((\oplus_i\mathcal N_i^*)_{sch},\mathcal M) \ar@{^{(}->}[r] & \Hom_{\mathcal R}((\oplus_i\mathcal N_i^*)_{sch},\prod_r\mathcal P_r)     } $$
Then, $ \prod_i (N_i\otimes M)\to (\prod_i N_i)\otimes M$ is inyective. 
By \ref{CML}, $M$ is a strict Mittag-Leffler module.

$\Rightarrow)$ Write $M=\ilim{i} P_i$, where $P_i$ is a finitely presented $R$-module, for any $i$. 
Consider the natural morphism $\mathcal P_i\to \mathcal M$ and let $\tilde P_i$ the image of this morphism. The morphism $\tilde P_i\hookrightarrow \mathcal M$, factors through a morphism
$\tilde P_i\to \mathcal P_{f(i)}$, for some $f(i)>i$, because $\tilde P$ is an FP-functor, by the hypothesis. 
$\mathcal M^*\to \tilde P_i^*$ is an epimorphism, by the hypothesis again. Hence,  $\overline{\mathcal M^*}\to \overline{\tilde P_i^*}$ is an epimorphism. Then, there exists a morphism $s_{f(i)}\colon \mathcal M
\to \mathcal P_{f(i)}$ such that the diagram
$$\xymatrix{\tilde P_i \ar@{^{(}->}[r] \ar@{^{(}->}[d] & \mathcal M \ar[dl]^-{s_{f(i)}}\\ \mathcal P_{f(i)} & }$$
is commutative. Now it is easy to check that the morphism $\mathcal M\to \prod_i \mathcal P_{f(i)}$,
$m\mapsto (s_{f(i)}(m)$ is a monomorphism.

\end{proof}

\begin{theorem} \label{5.8} Let $M$ be an $R$-module. The following statements are equivalent

\begin{enumerate}

\item $M$ is a  Mittag-Leffler module.

\item  The kernel of every morphism
$\mathcal R^n\to\mathcal M$  is isomorphic to a quotient of a module scheme.

\item The kernel of every morphism
$\mathcal N^*\to\mathcal M$ is isomorphic to a quotient of a module scheme, for any  $R$-module $N$.

\end{enumerate}

\end{theorem}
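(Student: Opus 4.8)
The plan is to prove the cycle $(1)\Rightarrow(3)\Rightarrow(2)\Rightarrow(1)$. The implication $(3)\Rightarrow(2)$ is immediate, since for $N=R^n$ one has $\mathcal N^*=(\mathcal R^n)^*=\mathcal R^n$, so $(2)$ is just the case $N=R^n$ of $(3)$. For $(2)\Rightarrow(1)$ I would invoke the image criterion of Theorem \ref{4.6}: given a finitely generated $N\subseteq M$, pick an epimorphism $R^r\to N$ and let $\varphi\colon\mathcal R^r\to\mathcal M$ be the associated morphism, so $\Ima\varphi=\tilde N$. From the exact sequence $0\to\Ker\varphi\to\mathcal R^r\to\tilde N\to 0$ and the hypothesis $(2)$, there is an epimorphism $\mathcal W^*\to\Ker\varphi$ from a module scheme, whence $\tilde N=\Coker[\mathcal W^*\to\mathcal R^r]$. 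As module schemes and $\mathcal R^r$ are FP-functors, Theorem \ref{4.8} shows $\tilde N$ is an FP-functor; since this holds for every finitely generated $N$, Theorem \ref{4.6} gives that $M$ is Mittag-Leffler.

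The substantial implication is $(1)\Rightarrow(3)$. Let $f\colon\mathcal N^*\to\mathcal M$. The first step is a reduction to a finitely presented target. Since $M$ is Mittag-Leffler, $\mathcal M=\ilim{k}\tilde N_k$ with each $\tilde N_k$ an FP-functor by Theorem \ref{4.6}; moreover each $\tilde N_k=\Ker[\mathcal M\to\Coker(\mathcal N_k\to\mathcal M)]$ is a sheaf in the Zariski topos, being the kernel of a morphism of quasi-coherent modules. Hence Theorem \ref{L5.11} lets me factor $f$ through a monomorphism $\tilde N_k\hookrightarrow\mathcal M$ for some $k$, so $\Ker f=\Ker[\mathcal N^*\to\tilde N_k]$. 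By Proposition \ref{4.75} this FP-functor embeds into a finitely presented quasi-coherent module $\mathcal P$, and composing I obtain $g'\colon\mathcal N^*\to\mathcal P$ with $\Ker f=\Ker g'$.

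Now I build an explicit module-scheme cover of $\Ker g'$. Writing $g'=\sum_i n_i\otimes p_i\in N\otimes_R P=\Hom_{\mathcal R}(\mathcal N^*,\mathcal P)$ (Note \ref{2.12N}) and padding so that the $p_i$ generate $P$, I factor $g'=\psi\circ\phi$ with $\phi\colon\mathcal N^*\to\mathcal R^{r'}$, $\phi(w)=(w(n_i))_i$, and $\psi\colon\mathcal R^{r'}\to\mathcal P$ an epimorphism of finitely presented modules; thus $\Ker g'=\phi^{-1}(\Ker\psi)$. By Lemma \ref{4.5}(2) there is an epimorphism from a finite free module onto $\Ker\psi$, i.e. a morphism $\rho\colon\mathcal R^m\to\mathcal R^{r'}$ with image $\Ker\psi$ such that $\mathcal R^m\to\Ker\psi$ is epi. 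Set $\mathcal E:=\Ker[\,\mathcal N^*\oplus\mathcal R^m\to\mathcal R^{r'},\ (w,x)\mapsto\phi(w)-\rho(x)\,]$. Two facts finish the argument. First, $\mathcal E$ is a module scheme: its source $\mathcal N^*\oplus\mathcal R^m=(\mathcal N\oplus\mathcal R^m)^*$ is a module scheme and its target $\mathcal R^{r'}$ is quasi-coherent, both reflexive (Theorem \ref{reflex}), so dualizing gives $\mathcal E=(\Coker c^*)^*$ with $c^*\colon\mathcal R^{r'}\to\mathcal N\oplus\mathcal R^m$ a morphism of quasi-coherent modules, and $\Coker c^*$ is then quasi-coherent. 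Second, the first projection induces an epimorphism $\mathcal E\to\Ker g'$, $(w,x)\mapsto w$: its image is $\{w:\phi(w)\in\Ima\rho=\Ker\psi\}=\Ker g'$, and it is surjective at each $S$ precisely because $\mathcal R^m\to\Ker\psi$ is epi. Hence $\Ker f=\Ker g'$ is a quotient of the module scheme $\mathcal E$.

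The main obstacle is the reduction step. For a morphism into the general module $\mathcal M$ there is no reason for the kernel to be governed by a module scheme; the naive route $\Ker f=(\Coker f^*)^*$ fails because $\Coker[\mathcal M^*\to\mathcal N]$ need not be an FP-functor when $N$ is not finitely presented. The Mittag-Leffler hypothesis is exactly what permits replacing $\mathcal M$ by a finitely presented $\mathcal P$ through the FP-functor $\tilde N_k$, after which Lemma \ref{4.5}(2) supplies the finite free cover of $\Ker\psi$ that drives the construction. The one delicate point to verify carefully is that $\Ker\psi$ is the honest functor-kernel of $\mathcal R^{r'}\to\mathcal P$ (it is not quasi-coherent unless $P$ is flat) and that it is nonetheless covered by $\mathcal R^m$, which is guaranteed precisely by Lemma \ref{4.5}(2).
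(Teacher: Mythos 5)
Your argument is correct, but it follows a genuinely different route from the paper's in the one substantial implication. The paper proves the cycle $(1)\Rightarrow(2)\Rightarrow(3)\Rightarrow(1)$: the finite free case $(2)$ is handled with the toolkit you also use (Theorem \ref{4.6} makes $\Ima g$ an FP-functor, Proposition \ref{4.75} realizes it as the kernel of an epimorphism $\mathcal P\to\mathcal P'$ of finitely presented quasi-coherent modules, and Lemma \ref{4.5} gives an exact sequence $\mathcal V^*\to\mathcal R^n\to\Ima g\to 0$, so $\Ker g$ is a quotient of $\mathcal V^*$); the general case $(3)$ is then bootstrapped from $(2)$: the paper lifts $f\colon\mathcal N^*\to\mathcal M$ to $g\colon\mathcal L^*\to\mathcal M$ along a free presentation $0\to N'\to L\to N\to 0$ (using surjectivity of $M\otimes_R L\to M\otimes_R N$), identifies $\Ker f$ with the kernel of $h\colon\Ker g\to\mathcal N'^*$, and covers it by the module scheme $\mathcal W^*=\Ker(h\circ\pi)$, where $\pi\colon\mathcal V^*\to\Ker g$ comes from the free case. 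You instead prove $(1)\Rightarrow(3)$ directly: you factor $f$ through some $\tilde N_k$ by combining $\mathcal M=\ilim{k}\tilde N_k$ with Theorem \ref{L5.11} (correctly justifying that the $\tilde N_k$ are Zariski sheaves, being kernels of morphisms of quasi-coherent modules) --- a tool the paper's own proof of this theorem never invokes --- then embed $\tilde N_k\hookrightarrow\mathcal P$ via Proposition \ref{4.75}, factor the resulting map through $\mathcal R^{r'}$ using the description $\Hom_{\mathcal R}(\mathcal N^*,\mathcal P)=N\otimes_R P$, and build the covering module scheme explicitly as the fibre product $\mathcal E=\Ker[(\mathcal N\oplus\mathcal R^m)^*\to\mathcal R^{r'}]=(\Coker c^*)^*$, with surjectivity onto $\Ker f$ guaranteed by the finite free cover of $\Ker\psi$ from Lemma \ref{4.5}(2). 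Both proofs rest on the same core results (\ref{4.6}, \ref{4.75}, \ref{4.5}, \ref{4.8}): yours buys a uniform treatment of arbitrary $\mathcal N^*$ and an explicit covering scheme, at the price of the sheaf-theoretic limit theorem; the paper's keeps $(3)$ a formal consequence of the finite free case $(2)$. Your $(2)\Rightarrow(1)$ coincides with the paper's $(3)\Rightarrow(1)$, and $(3)\Rightarrow(2)$ is immediate in both readings.
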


\begin{proof} (1) $\Rightarrow$ (2) Let $g\colon \mathcal R^n\to \mathcal M$ be a morphism of $\mathcal R$-modules.  By Theorem 
\ref{5.6}, $\Ima g$ is an FP-functor.  By Proposition \ref{4.75},  $\Ima g$ is the kernel of an epimorphism $\mathcal P\to\mathcal P'$, where $P$ and $P'$ are finitely presented $R$-modules. By Lemma \ref{4.5}, there exists an exact sequence of morphisms of $\mathcal R$-modules
$$\mathcal V^*\to \mathcal R^n\to \Ima g\to 0$$
Then, $\Ker g=\Ker[\mathcal R^n\to \Ima g]$ is a quotient of $\mathcal V^*$. 

(2) $\Rightarrow$ (3)
Let $L$ be a free module and $g\colon \mathcal L^*\to \mathcal M$ be a morphism of $\mathcal R$-modules. Consider the dual morphism $g^*\colon \mathcal M^*\to \mathcal L$. Observe that 
$g^*$ factors through a finite free direct summand $\mathcal L'$ of $\mathcal L$.
Then, $g$ factors through $\mathcal L'^*$ and $$\Ker g\simeq (\mathcal L/\mathcal L')^*\times \Ker[\mathcal L'^*\to\mathcal M].$$ Hence, $\Ker g$ is a quotient of a module scheme.

Let $N$ be an $R$-module and 
let $f\colon \mathcal N^*\to\mathcal M$  be a morphism of $\mathcal R$-module. Let 
$L$ be a free $R$-module,  
$j\colon L\to N$ be an epimorphism and $N':=\Ker j$. The morphism $\Hom_{\mathcal R}(\mathcal L^*,\mathcal M)=M\otimes L\to M\otimes N=\Hom_{\mathcal R}(\mathcal N^*,\mathcal M)$ is surjective. Hence,
there exists a morphism $g\colon \mathcal L^*\to \mathcal M$
such that the diagram
$$\xymatrix{ \mathcal N^* \ar[r]^-f \ar@{^{(}->}[d]-{j^*} & \mathcal M  \\  \mathcal  L^* \ar[ur]-g &}$$
is commutative. Let $\pi\colon \mathcal V^*\to \Ker g$ be an epimorphism and let $h$ be the composite morphism $\Ker g\hookrightarrow \mathcal L^*\to \mathcal N'^*$. Observe that $\Ker f=\Ker h$. Let
$\mathcal W^*=\Ker (h\circ\pi)$. The morphism $\mathcal W^*=\Ker (h\circ\pi)\overset\pi\to \Ker h=\Ker f$ is an epimorphism, because $\pi$ is an epimorphism.

(3) $\Rightarrow$ (1) Let $f\colon \mathcal R^n\to \mathcal M$ a morphism of $\mathcal R$-module. There exists an epimorphism $\pi\colon \mathcal V^*\to \Ker f$. Then, $\Ima f=\Coker \pi$ is an FP-funtor by \ref{4.8}.
Then, $M$ is a Mittag-Leffler module by \ref{4.6}.

\end{proof}

\begin{proposition} Let $M$ be an $R$-module. The following statements are equivalent

\begin{enumerate}

\item $M$ is a  strict Mittag-Leffler module.

\item  The cokernel of every morphism
$\mathcal M^*\to\mathcal R^n$  is isomorphic  to an $\mathcal R$-submodule of a quasi-coherent module.

\item The cokernel of every morphism
$\mathcal M^*\to\mathcal N$ is isomorphic to an $\mathcal R$-submodule of a quasi-coherent module, for any  $R$-module $N$.

\end{enumerate}

\end{proposition}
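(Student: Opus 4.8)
The plan is to establish the cycle $(1)\Rightarrow(3)\Rightarrow(2)\Rightarrow(1)$. The implication $(3)\Rightarrow(2)$ is immediate, taking $N=R^n$, so the substance lies in $(1)\Rightarrow(3)$ and in the reverse implication $(2)\Rightarrow(1)$. Everything is organized around the duality $\Hom_{\mathcal R}(\mathcal M^*,\mathcal N)=\Hom_{\mathcal R}(\mathcal N^*,\mathcal M)$ of Proposition~\ref{trivial} (via reflexivity, Theorem~\ref{reflex}): writing $g:=\phi^*\colon\mathcal N^*\to\mathcal M$ for the morphism associated with $\phi\colon\mathcal M^*\to\mathcal N$ and applying the left-exact functor $(-)^*$ to $\mathcal M^*\overset{\phi}{\to}\mathcal N\to\Coker\phi\to0$, one gets the identity $(\Coker\phi)^*=\Ker g$. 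This exhibits the present statement as the $(-)^*$-dual of Theorem~\ref{5.8}.

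For $(1)\Rightarrow(3)$, I would first push $g=\phi^*$ into a finitely generated piece of $\mathcal M$. Since $\mathcal M=\ilim{k}{\tilde N_k}$ over the finitely generated submodules $N_k\subseteq M$, Theorem~\ref{L5.11} factors $g=\iota_k\circ g'$ with $g'\colon\mathcal N^*\to{\tilde N_k}$ and $\iota_k\colon{\tilde N_k}\hookrightarrow\mathcal M$; dualizing, $\phi=g'^*\circ\iota_k^*$. Strict Mittag--Leffler now enters through Theorem~\ref{5.6}: ${\tilde N_k}$ is an FP-functor and $\iota_k^*\colon\mathcal M^*\to{\tilde N_k}^*$ is an epimorphism. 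The epimorphism forces $\Ima\phi=\Ima g'^*$, so $\Coker\phi=\Coker[g'^*\colon{\tilde N_k}^*\to\mathcal N]$; and since ${\tilde N_k}$ is FP, Proposition~\ref{4.75} gives $0\to{\tilde N_k}\to\mathcal P\to\mathcal P'\to0$ with $P,P'$ finitely presented, whence $\mathcal P^*\to{\tilde N_k}^*$ is an epimorphism by Proposition~\ref{3.6}. Composing reduces $(3)$ to the assertion that $\Coker[\psi\colon\mathcal P^*\to\mathcal N]$ embeds into a quasi-coherent module whenever $P$ is finitely presented.

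I would isolate this last assertion as a lemma. A presentation $R^m\overset{u}{\to}R^k\overset{v}{\to}P\to0$ dualizes to $\mathcal P^*=\Ker[u^*\colon\mathcal R^k\to\mathcal R^m]\subseteq\mathcal R^k$ (the inclusion being $v^*$). As $R^k\twoheadrightarrow P$ makes $N^k=\Hom_{\mathcal R}(\mathcal R^k,\mathcal N)\to N\otimes_RP=\Hom_{\mathcal R}(\mathcal P^*,\mathcal N)$ surjective (Note~\ref{2.12N}), $\psi$ extends to some $\tilde\psi\colon\mathcal R^k\to\mathcal N$. One then forms $(\tilde\psi,u^*)\colon\mathcal R^k\to\mathcal N\oplus\mathcal R^m$ and checks that $\mathcal N\to\Coker(\tilde\psi,u^*)$, $n\mapsto[(n,0)]$, has kernel exactly $\Ima\psi$ (a point $(n,0)$ lies in the image of $(\tilde\psi,u^*)$ iff $n=\tilde\psi(x)$ with $x\in\Ker u^*=\mathcal P^*$, i.e.\ $n\in\Ima\psi$). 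Hence $\Coker\psi$ embeds into the quasi-coherent module $\Coker(\tilde\psi,u^*)$, completing $(1)\Rightarrow(3)$.

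The implication $(2)\Rightarrow(1)$ is where I expect the main difficulty, since strictness must be recovered from a cokernel condition. The plan is to combine $(\Coker\phi)^*=\Ker g$ with the observation that $(2)$ forces $\Ima\phi=\Ker[\mathcal R^n\to\mathcal W]=(\Coker[\mathcal W^*\to\mathcal R^n])^*$ to be reflexive with FP predual (Theorems~\ref{4.8} and~\ref{4.6N}). Reading this back through the factorization $\phi=q^*\iota^*$ attached to a surjection $R^n\twoheadrightarrow N$ (with $q\colon\mathcal R^n\twoheadrightarrow{\tilde N}$, $\iota\colon{\tilde N}\hookrightarrow\mathcal M$) should deliver both halves of Theorem~\ref{5.6}: that ${\tilde N}$ is an FP-functor (equivalently, $M$ is Mittag--Leffler, by Theorem~\ref{5.8} and Corollary~\ref{CML}) and that $\iota^*$ is an epimorphism. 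The delicate step is to convert the monomorphism $\Coker\phi\hookrightarrow\mathcal W$ into the surjection $\mathcal M^*\to{\tilde N}^*$; I would attack it using the exactness of $\overline{(\ )}$ on dual functors together with the injectivity criterion behind Corollary~\ref{CML} and the product embedding $\mathcal M\hookrightarrow\prod_r\mathcal P_r$ of the preceding corollary.
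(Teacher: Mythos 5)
Your $(1)\Rightarrow(3)$ and $(3)\Rightarrow(2)$ are correct, and the first is in fact more explicit than the paper's own route (the paper proves $(1)\Rightarrow(2)$ by dualizing the exact sequence $\mathcal V^*\to\mathcal R^n\to\Ima f^*\to 0$ coming from Theorem \ref{5.8}, and then only asserts $(2)\Rightarrow(3)$ "dually"); your lemma embedding $\Coker[\psi\colon\mathcal P^*\to\mathcal N]$ into the quasi-coherent module $\Coker(\tilde\psi,u^*)$ via a presentation $\mathcal R^m\overset{u}\to\mathcal R^k\to\mathcal P\to 0$ is a clean, self-contained substitute, and your reduction to it via Theorems \ref{L5.11}, \ref{5.6}, \ref{4.75} and \ref{3.6} is sound. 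The genuine gap is $(2)\Rightarrow(1)$, which you leave as a plan, and the plan as stated does not work: the product embedding $\mathcal M\hookrightarrow\prod_r\mathcal P_r$ of the preceding corollary exists precisely when $M$ is strict Mittag-Leffler, i.e. it presupposes the conclusion you are after (and you do not indicate how to extract such an embedding from $(2)$); likewise Corollary \ref{CML} characterizes Mittag-Leffler, not strict Mittag-Leffler, so at best it yields half of the criterion of Theorem \ref{5.6}.

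The missing mechanism -- the actual content of the paper's $(2)\Rightarrow(1)$ -- is this. Given $f\colon\mathcal R^n\to\mathcal M$ with dual $f^*\colon\mathcal M^*\to\mathcal R^n$, the functor $\Coker f^*$ is an FP-functor by \ref{4.8}; by hypothesis $(2)$ it is a submodule of some quasi-coherent $\mathcal V$, and being also a quotient of $\mathcal R^n$ it is the image of a morphism of quasi-coherent modules, so Proposition \ref{4.75} gives an exact sequence $0\to\Coker f^*\to\mathcal P\to\mathcal P'\to 0$ with $P,P'$ finitely presented. Proposition \ref{3.6} then yields an epimorphism $\mathcal P^*\to(\Coker f^*)^*=\Ker f$; writing $h\colon\mathcal P^*\to\Ker f\hookrightarrow\mathcal R^n$, one gets both that $\Ima f=\Coker h$ is an FP-functor (by \ref{4.8}) and the key identity $(\Ima f)^*=(\Coker h)^*=\Ker h^*=\Ima f^*$, which says exactly that $\mathcal M^*\to(\Ima f)^*$ is an epimorphism; Theorem \ref{5.6} then gives strictness. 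Note that the identity $(\Ima f)^*=\Ima f^*$ is literally equivalent to the surjection you call the "delicate step" (since $\Ima f^*\cong\Ima[\mathcal M^*\to(\Ima f)^*]$), so an argument of this type cannot be avoided; your observation that $\Ima f^*$ is reflexive with FP predual does not by itself compare $(\Ima f)^*$ with $\Ima f^*$. Also, routing Mittag-Lefflerness through Theorem \ref{5.8}(2), as you propose, hides the same cost: the epimorphism onto $\Ker f=(\Coker f^*)^*$ supplied by Theorem \ref{4.6N} comes from a direct sum $\oplus_j\mathcal Q_j^*$, which is not a module scheme, so one again needs \ref{4.75} and \ref{3.6} (or Lemma \ref{4.5}) to produce a quotient of an honest module scheme.
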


\begin{proof} (1) $\Rightarrow$ (2) Let $f\colon \mathcal M^*\to \mathcal R^n$ a morphism. Consider the dual morphism $f^*\colon \mathcal R^n\to \mathcal M$. The morphism $\mathcal M^*\to (\Ima f^*)^*$ is an epimorphism, by \ref{5.6}. There exists an exact sequence of morphisms
$$\mathcal V^*\to \mathcal R^n\to \Ima f^*\to 0$$
by \ref{5.8}. Then, $0\to (\Ima f^*)^*\to \mathcal R^n\to \mathcal V$ is exact and $\Coker f=\Coker[(\Ima f^*)^*\to \mathcal R^n]$ is an $\mathcal R$-submodule of $\mathcal V$.

(2) $\Rightarrow$ (3) Dually, proceed as (2) $\Rightarrow$ (3) in \ref{5.8}.

(3) $\Rightarrow$ (2) It is obvious.

(2) $\Rightarrow$ (1) Let $\phi\colon \mathbb P\to \mathbb P'$ be a morphism of $\mathcal R$-modules. It is easy to check that
$$(\Coker \phi)^*=\Ker\phi^*.$$

 Let $f\colon\mathcal R^n\to\mathcal M$ a morphism of $\mathcal R$-modules. Consider the dual morphism $f^*\colon \mathcal M^*\to \mathcal R^n$. $\Coker f^*$ is an FP-functor, by \ref{4.8}. By the hypothesis, there exists a monomorphism $i\colon \Coker f^*\hookrightarrow \mathcal V$. By \ref{4.75}, there exist finitely presented $R$-modules and an exact sequence of morphisms
$$0\to \Coker f^*\to \mathcal P\to \mathcal P'\to 0$$
By \ref{3.6}, the natural morphism $\mathcal P^*\to (\Coker f^*)^*=\Ker f$ is an epimorphism. Let $h$ be the composite morphism $\mathcal P^*\to \Ker f\to \mathcal R^n$. Then, $\Ima f=\Coker h$, which is an FP-functor by \ref{4.8}, and $\Ker h^*=\Ima f^*$. Observe that 
$$(\Ima f)^*=(\Coker h)^*=\Ker h^*=\Ima f^*$$
Hence, the morphism $\mathcal M^*\to (\Ima f)^*$ is an epimorphism and $M$ is a strict Mittag-Leffler by \ref{5.6}.

\end{proof}

\end{document}